\numberwithin{equation}{section}
\newtheorem{theorem}[equation]{Theorem}
\newtheorem*{thm}{Theorem}
\newtheorem{Thm}{Theorem}
\theoremstyle{plain}
\newtheorem{assumption}[equation]{Assumption}
\newtheorem{lemma}[equation]{Lemma}
\newtheorem{proposition}[equation]{Proposition}
\newtheorem{definition}[equation]{Definition}
\newtheorem{corollary}[equation]{Corollary}
\newtheorem*{corollary*}{Corollary}
\theoremstyle{definition}
\newtheorem{remark}[equation]{Remark}
\newenvironment{myproof}[2] {\emph{Proof of {#1} {#2}.}}{\hfill$\square$}
\def\Gal{\mathrm{Gal}}
\def\GL{\mathrm{GL}}
\def\GSp{\mathrm{GSp}}
\def\Sp{\mathrm{Sp}}
\def\GU{\mathrm{GU}}
\def\Nilp{\mathrm{Nilp}}
\def\det{\mathrm{det}}
\def\ord{\mathrm{ord}}
\def\Lie{\mathrm{Lie}}
\def\Ker{\mathrm{Ker}}
\def\Trd{\mathrm{Trd}}
\def\Nrd{\mathrm{Nrd}}
\def\Pa{\mathrm{Pa}}
\def\Kl{\mathrm{Kl}}
\def\sing{\mathrm{sin}}
\def\rmc{\mathrm{c}}
\def\Coker{\mathrm{Coker}}
\def\Ker{\mathrm{Ker}}
\def\ss{\mathrm{ss}}
\def\Iw{\mathrm{Iw}}
\def\Spf{\mathrm{Spf}}
\def\Spec{\mathrm{Spec}}
\def\et{\'{e}tale}
\def\min{\mathrm{min}}
\def\lr{\mathrm{lr}}
\def\Frob{\mathrm{Frob}}
\DeclareMathOperator{\Hom}{Hom}
\DeclareMathOperator{\End}{End}
\def\calC{\mathcal{C}}
\def\calL{\mathcal{L}}
\def\calM{\mathcal{M}}
\def\calO{\mathcal{O}}
\def\calT{\mathcal{T}}
\def\frakm{\mathfrak{m}}
\def\CC{\mathbb{C}}
\def\FF{\mathbb{F}}
\def\GG{\mathbb{G}}
\def\PP{\mathbb{P}}
\def\QQ{\mathbb{Q}}
\def\RR{\mathbb{R}}
\def\TT{\mathbb{T}}
\def\XX{\mathbb{X}}
\def\VV{\mathbb{V}}
\def\ZZ{\mathbb{Z}}
\def\sfA{\mathsf{A}}
\def\sfX{\mathsf{X}}
\def\rme{\mathrm{e}}
\def\rmu{\mathrm{u}}
\def\rms{\mathrm{s}}
\def\rmB{\mathrm{B}}
\def\rmC{\mathrm{C}}
\def\rmD{\mathrm{D}}
\def\rmF{\mathrm{F}}
\def\rmG{\mathrm{G}}
\def\rmH{\mathrm{H}}
\def\rmI{\mathrm{I}}
\def\rmJ{\mathrm{J}}
\def\rmK{\mathrm{K}}
\def\rmL{\mathrm{L}}
\def\rmN{\mathrm{N}}
\def\rmM{\mathrm{M}}
\def\rmP{\mathrm{P}}
\def\rmQ{\mathrm{Q}}
\def\rmU{\mathrm{U}}
\def\rmV{\mathrm{V}}
\def\rmW{\mathrm{W}}
\def\rmX{\mathrm{X}}
\def\rmY{\mathrm{Y}}
\def\rmR{\mathrm{R}}
\def\rmS{\mathrm{S}}
\def\rmT{\mathrm{T}}
\def\rmZ{\mathrm{Z}}
\def\sfH{\mathsf{H}}
\newcommand{\hooklongrightarrow}{\lhook\joinrel\longrightarrow}
\author{Haining Wang }
\address{\parbox{\linewidth}{Haining Wang\\Shanghai Center for Mathematical Sciences,\\ Fudan University,\\No.2005 Songhu Road,\\Shanghai, 200438, China.~ }}
\email{wanghaining1121@outlook.com}\subjclass[2000]{Primary 11G18, Secondary 20G25}
\date{\today}
\begin{document}
\title[Level lowering for paramodular level]{Level lowering on Siegel modular threefold of paramodular level}

\keywords{\emph{Level lowering, Shimura varieties, Vanishing cycles}}

\begin{abstract}
In this article we prove several level lowering results for cuspidal automorphic representations occurring in the cohomology of the Siegel modular threefold with paramodular level structure by adapting a method of Ribet in his proof of the Serre's epsilon conjecture. The proof is purely geometric and relies on the description of the supersingular locus of certain quaternionic unitary Shimura variety and an arithmetic level raising result on this Shimura variety. The heart of the proof is a comparison of the dimension of the space of vanishing cycles on the Siegel modular threefold with paramodular level structure with that on the quaternionic unitary Shimura variety. 
\end{abstract}

\maketitle
\tableofcontents
\section{Introduction}
\subsection{Motivations} In this article we prove three level lowering principles for Galois representations attached to  cuspidal automorphic representation of the symplectic similitude group $\GSp_{4}$ of degree $4$ over $\QQ$. These results are inspired by the celebrated work of Ribet on Serre's epsilon conjecture \cite{Rib90}, \cite{Rib91}.  To motivate our results, we first recall these results in the setting of modular forms. Let $l\geq 3$ be a prime and we fix an isomorphism $\iota_{l}: \CC\cong \overline{\QQ}_{l}$. Denote by $\rmG_{\QQ}=\Gal(\overline{\QQ}/\QQ)$.  Let 
\begin{equation*}
\rho: \rmG_{\QQ}\rightarrow \GL_{2}(k)
\end{equation*}
be an absolutely irreducible representation of $\rmG_{\QQ}$ valued in a finite field $k$ of characteristic $l$. Suppose that $\rho$ is modular of level $\rmN$ which means $\rho$ comes as the residual representation of the $\lambda$-adic representation 
\begin{equation*}
\rho_{f, \lambda}:  \rmG_{\QQ}\rightarrow \GL_{2}(E_{\lambda})
\end{equation*}
attached to a weight $2$ newform $f$ of level $\Gamma_{0}(\rmN)$. Here $E$ is the Hecke field of $f$ and $\lambda$ is the place in $E$ over $l$ induced by $\iota_{l}$. Let $p$ be a prime which divides $\rmN$ exactly once and suppose that $\rho$ is unramified at $p$. Then Serre \cite{Ser87} conjectured that $\rho$ is modular of level $\rmN/p$. This conjecture is known as the Serre's epsilon conjecture.  The following theorem is the main result of \cite{Rib90}.
\begin{thm}[Serre's epsilon conjecture]\label{epsilon}
Assume $\rho$ is modular of level $\rmN$ and is unramified at $p$ with $p\mid\mid \rmN$. Then $\rho$ is modular of level $\rmN/p$ if $\rmN$ is prime to $l$. 
\end{thm}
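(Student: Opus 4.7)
The plan is to adapt Ribet's proof, splitting the argument into two cases based on the local behavior of $\rho$ at $p$.

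The first case is Mazur's principle, applicable when $p \not\equiv 1 \bmod l$ (or a mild numerical variant). Here one works directly on $X_0(N)$ at $p$: writing $N = pM$, the Deligne--Rapoport theorem describes its special fiber as two copies of $X_0(M)$ glued transversally at the supersingular points, and the character group of the N\'eron model of $J_0(N)$ at $p$ is accordingly free abelian on supersingular points modulo a standard relation. The monodromy filtration on vanishing cycles identifies the inertia action on the $\rho$-component of $H^1$ with an action on this character group, so the assumption that $\rho$ is unramified at $p$ forces the $\rho$-isotypic part of the character group to vanish. Combined with Ihara's lemma to rule out Eisenstein obstructions, an exact sequence argument then shows the $\rho$-component lies in the image of the degeneracy maps from $H^1(X_0(M))$, i.e., $\rho$ is modular of level $M = N/p$.

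When the numerical condition fails (typically $p \equiv 1 \bmod l$), the proof proceeds via Ribet's $p,q$-switch. I choose an auxiliary prime $q \nmid Nl$ such that $\rho(\mathrm{Frob}_q)$ has characteristic polynomial congruent to $(x-q)(x-1) \bmod l$; level raising via Ihara's lemma then realizes $\rho$ in a newform of level $Nq$ whose local component at $q$ is Steinberg. Applying Jacquet--Langlands transfers $\rho$ to the cohomology of the Shimura curve $X^B$ attached to the indefinite quaternion algebra $B$ ramified precisely at $\{p, q\}$ with level $\Gamma_0(N/p)$ away from those primes. The Cerednik--Drinfeld uniformization at $p$ describes the special fiber of $X^B$ as a Mumford curve, and via the combinatorics of the Bruhat--Tits tree of $\GL_2(\QQ_p)$ identifies the character group of the N\'eron model of $J^B$ at $p$ with a space of mod $l$ automorphic forms on the definite quaternion algebra ramified at $\{q, \infty\}$. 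The crucial switch is that the same space appears, by Deligne--Rapoport at $q$, as a subquotient of the character group of $J_0(Nq)$ at $q$; this yields a Hecke-equivariant isomorphism matching the vanishing cycles at $p$ on the quaternionic side with those at $q$ on the $\GL_2$ side.

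The argument then concludes as in the first case: the unramified hypothesis on $\rho$ at $p$ forces the $\rho$-component of the vanishing cycles on $X^B$ at $p$ to vanish, and the switch transfers this into a vanishing on the $\GL_2$ side, which descends $\rho$ to a form of level $N/p$. The main obstacle is arranging the auxiliary prime $q$ so that all the mod $l$ Hecke identifications are compatible, and in verifying Ihara's lemma, which is ultimately what rules out any Eisenstein contribution spoiling the switch; the irreducibility hypothesis on $\rho$ is the essential input making this non-degeneracy precise.
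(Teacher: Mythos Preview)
Your proposal is correct and follows the same route the paper indicates. The paper does not actually prove this theorem---it is stated in the introduction as Ribet's result from \cite{Rib90}, with the argument outlined only as the combination of Mazur's principle (handling $p \not\equiv 1 \bmod l$), level raising at an auxiliary prime $q$, and Ribet's $p,q$-switch; your sketch correctly expands this outline with the expected geometric ingredients (Deligne--Rapoport for the special fiber of $X_{0}(N)$, Cerednik--Drinfeld uniformization on the quaternionic side, and the matching of singular loci that makes the switch possible).
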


The proof of the above theorem in fact breaks into two parts. First one proves the following theorem which is known as the Mazur's principle.
\begin{thm}[Mazur's principle]
Assume $\rho$ is modular of level $\rmN$ and unramified at $p$ with $p\mid\mid \rmN$. Then $\rho$ is modular of level $\rmN/p$ if $\rmN$ is prime to $l$ and $p\not\equiv 1 \mod l$.
\end{thm}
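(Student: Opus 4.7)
The plan is to adapt Ribet's proof of Mazur's principle in the $\GL_{2}$ setting to the paramodular Siegel threefold. Write $N = pM$ with $p\nmid M$, let $X$ denote the paramodular Siegel threefold of level $N$, and let $X_{M}$ denote its level-$M$ analogue. Fix an embedding $\bar{\QQ}\hookrightarrow\bar{\QQ}_{p}$, and denote by $\TT$ the Hecke algebra acting on $H^{3}_{\mathrm{et}}(X_{\bar{\QQ}},\QQ_{l})$, and by $\gothm\subset\TT$ the maximal ideal cut out by $\rho$. By hypothesis the $\gothm$-localisation $H^{3}_{\mathrm{et}}(X_{\bar{\QQ}},\QQ_{l})_{\gothm}$ contains (a twist of) $\rho_{\pi,l}$ as a direct summand; the goal is to show that this summand already appears in $H^{3}_{\mathrm{et}}(X_{M,\bar{\QQ}},\QQ_{l})_{\gothm}$, so that $\rho$ is modular of level $M$.

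First I would work with the semistable integral model of $X$ over $\ZZ_{p}$ afforded by the paramodular level structure at $p$, whose special fiber admits an explicit stratification; the strata of positive codimension concentrate the vanishing cycles at $p$. Their Hecke-equivariant description in terms of (the supersingular locus of) a suitable quaternionic Siegel threefold is precisely the input obtained in the author's previous work referenced in the introduction. Feeding this into the weight spectral sequence for $H^{3}_{\mathrm{et}}(X_{\bar{\QQ}_{p}},\QQ_{l})$ and localising at $\gothm$ yields a filtration whose graded pieces are, up to twist, cohomology groups of the strata and of the vanishing cycle sheaf, and a monodromy exact sequence
\begin{equation*}
0 \longrightarrow H^{3}_{\mathrm{et}}(X_{\bar{\QQ}_{p}},\QQ_{l})^{I_{p}}_{\gothm} \longrightarrow H^{3}_{\mathrm{et}}(X_{\bar{\QQ}_{p}},\QQ_{l})_{\gothm} \xrightarrow{\,N\,} H^{3}_{\mathrm{et}}(X_{\bar{\QQ}_{p}},\QQ_{l})_{\gothm}(-1)
\end{equation*}
in which the image of $N$ is controlled by the ``new-at-$p$'' (vanishing cycle) piece.

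The arithmetic input then enters via the two hypotheses. Grothendieck's monodromy theorem together with the assumption that $\rho$ is unramified at $p$ forces the monodromy operator $N$ to act as zero on the Galois summand carrying $\rho$, so that this summand is inertia-invariant. The weight filtration then dictates that any contribution coming from the new-at-$p$ piece would carry Frobenius eigenvalues in the Steinberg-like pattern $\alpha,\alpha p$; in that event unramifiedness at $p$ together with the resulting congruence $\alpha\equiv \alpha p\pmod{\gothm}$ would force $p\equiv 1\pmod{l}$, which is precisely what is excluded. Consequently the $\gothm$-component lives entirely in the ``old-at-$p$'' part of the weight filtration, which by the theory of paramodular oldforms is the image of the two degeneracy maps from $X_{M}$. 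Chasing this identification through the spectral sequence gives that $\rho$ already appears in $H^{3}_{\mathrm{et}}(X_{M,\bar{\QQ}},\QQ_{l})$, i.e.\ that $\rho$ is modular of level $M$.

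The principal obstacle lies in the first step: while the $\GL_{2}$ case reduces to the classical picture of the character and component groups of $J_{0}(N)$ at $p$, here one needs a careful Hecke-equivariant description of (a) the strata of the semistable model of $X$ at $p$, (b) the vanishing cycle sheaf via the quaternionic Siegel threefold, and (c) the Frobenius and monodromy actions on each graded piece of the weight spectral sequence. Once this geometric/automorphic input is in hand, the Galois-theoretic deduction from $p\not\equiv 1\pmod{l}$ proceeds in formal parallel with Ribet's original argument.
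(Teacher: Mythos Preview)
The statement you were given is the classical $\GL_{2}$ Mazur's principle, which the paper cites as background in the introduction and does not itself prove; the paper's own Mazur's principle is the $\GSp_{4}$ analogue in \S5. Your proposal conflates the two settings: you invoke paramodular Siegel threefolds and $\rmH^{3}$, so you are really sketching the $\GSp_{4}$ result rather than the stated $\GL_{2}$ theorem.

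Read as an attempt at the $\GSp_{4}$ theorem, your outline has two genuine gaps. First, the paramodular integral model at $p$ is \emph{not} semistable: by Yu's local-model computation the special fiber has isolated ordinary quadratic singularities, so the correct tool is the Picard--Lefschetz formula for isolated singularities (\S2.2), not a weight spectral sequence attached to a normal-crossings special fiber. Second, and fatally for your endgame, there are \emph{no} degeneracy maps between hyperspecial and paramodular level: both $H$ and $K(p)$ are maximal parahorics (the paper flags this explicitly in \S3.2 as a key departure from the modular-curve story), so ``the image of the two degeneracy maps from $X_{M}$'' does not exist and the old/new dichotomy you invoke has no direct meaning here. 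The paper instead argues by contradiction. Assuming every $\pi^{\prime}$ contributing to $\rmH^{3}_{!}(X_{K(p),\bar{\QQ}},\VV)_{\gothm}$ is ramified at $p$, the mod-$l$ Picard--Lefschetz diagram forces $\beta\otimes k=0$, so the vanishing-cycle module $\bigoplus_{x}R^{3}\Phi(\VV)_{x,\gothm}\otimes k$ appears as a \emph{two}-dimensional quotient of $\bar{\rho}_{\pi,l}\!\mid_{G_{\QQ_{p}}}$ (the reduction of the twisted Steinberg block). On this quotient $\mathrm{Frob}_{p}$ acts by the Atkin--Lehner scalar $\chi\sigma(p)$, giving determinant $\chi^{2}\sigma^{2}(p)$; but the Weil--Deligne shape forces determinant $\chi^{2}\sigma^{2}(p)\cdot p^{-1}$, whence $p\equiv 1\pmod l$, a contradiction. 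Your ``$\alpha\equiv \alpha p$'' heuristic is the right instinct, but it must be run as a determinant comparison on this two-dimensional piece, and the output is the existence of an unramified $\pi^{\prime}$ in the $\gothm$-packet rather than a descent through oldforms.
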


The assumption that $p\not\equiv 1 \mod l$ means that the two Frobenius eigenvalues of $\rho$ at $p$ are distinct modulo $l$. In this case, the geometric argument simplifies significantly and one can prove this theorem by analyzing the bad reduction of a suitable modular curve. To treat the case when $p\equiv 1\mod l$, one chooses an auxiliary prime $q$ such that $q\not\equiv 1\mod l$ and raise the level of $f$ to $\Gamma_{0}(q\rmN)$. Then one can regard $f$ as a modular form on the indefinite quaternion algebra $\rmB=\rmB_{pq}$ that is ramified at $pq$ by the Jacquet--Langlands correspondence. Then the so-called $(p, q)$-switch trick allows one to drop the level at $p$ and the Mazur's principle allows one to drop the level at $q$. The original proof in  \cite{Rib90} relies on certain multiplicity one results on the Galois representations appearing in the torison points of the Jacobian of the modular and Shimura curves which are hard to generalize to other Shimura varieties. The reliance on these multiplicity one results are lifted in  \cite{Rib91} and in the same article Ribet proves the following theorem which we will refer to as the Ribet's principle.
 
\begin{thm}[Ribet's principle]
Let $\rmN$ be a positive integer. Let $p, q$ be two distinct primes which are coprime to $l\rmN$. Assume $\rho$ is modular of level $pq\rmN$.  Assume that 
$\rho$ is unramified at $p$ but ramified at $q$. Then $\rho$ is modular of level $q\rmN$. 
\end{thm}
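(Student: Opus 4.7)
The plan is to execute Ribet's $p,q$-switch argument. I would first reduce to the case where $\rho$ arises from a newform $\pi$ of level $\Gamma_0(pqN)$ that is new at both $p$ and $q$; if $\pi$ were old at $p$, it would already descend to level $qN$ and we would be done. In this situation both $\pi_p$ and $\pi_q$ are unramified twists of the Steinberg representation, and we write $\gothm$ for the maximal ideal of the Hecke algebra $\TT(pqN)$ attached to $\rho$.

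The first key ingredient is the description, due to Deligne--Rapoport and refined by Ribet, of the character group $X_q$ of the toric part of the N\'eron model of $J_0(pqN)$ over $\QQ_q$: it is Hecke-equivariantly isomorphic to the degree-zero divisors on the supersingular locus of $X_0(pN)_{/\bar\FF_q}$, which by Deuring's bijection identifies with the double coset space $B^\times \backslash \hat B^\times / \hat R_{pN}^\times$ for the definite quaternion algebra $B/\QQ$ ramified at $\{q,\infty\}$ with Eichler order $R_{pN}$ of level $pN$. Since $\rho$ is ramified at $q$, the Grothendieck exact sequence for the monodromy filtration forces $(X_q)_\gothm \neq 0$; via Jacquet--Langlands, $\rho$ appears in the space of quaternionic forms on $B$ of level $pN$ localized at $\gothm$.

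The $p,q$-switch now consists of performing the symmetric analysis at $p$: the character group $X_p$ of the toric part at $p$ is identified with the double coset space for the definite quaternion algebra $B'/\QQ$ ramified at $\{p,\infty\}$ and Eichler level $qN$. One decomposes $(X_q)_\gothm$ into its $p$-old and $p$-new components via the degeneracy maps at $p$; if the $p$-old part is nonzero, then $\rho$ descends to quaternionic forms on $B$ of Eichler level $N$, whose Jacquet--Langlands transfer is the desired $\GL_2$ form of level $qN$. The theorem thus reduces to showing that the hypothesis ``$\rho$ unramified at $p$'' forces the $p$-new contribution to $(X_q)_\gothm$ to vanish.

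The main obstacle is this last step in the case $p \equiv 1 \pmod l$, where Mazur's principle alone does not suffice: the local Galois representation of a $p$-new form at $p$ has the semistable shape $\Matrix{\chi\epsilon}{*}{0}{\chi}$, and one must prove the monodromy $*$ vanishes mod $l$. To this end I would use Grothendieck's monodromy pairing on $J_0(pqN)$ to compare the two descriptions: $p$-new vectors in $(X_q)_\gothm$ (arising from $B$) correspond under the pairing to $q$-new vectors in $(X_p)_\gothm$ (arising from $B'$). An application of Ihara's lemma for $B'$ --- available because $\rho_\gothm$ is absolutely irreducible, precluding Eisenstein obstructions --- then forces the $p$-new part of $(X_q)_\gothm$ to vanish. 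The surviving $\gothm$-form on $B$ of Eichler level $N$ transfers via Jacquet--Langlands to a $\GL_2$ newform of level $qN$ giving rise to $\rho$.
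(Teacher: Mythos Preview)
The paper does not itself prove this $\GL_2$ statement; it is quoted in the introduction as Ribet's classical result from \cite{Rib91}, serving as motivation for the $\GSp_4$ analogue (Theorem~\ref{Rib-principle}). There is thus no proof in the paper to compare against line by line, but the paper does sketch the $\GL_2$ strategy and then carries it out in full for $\GSp_4$, and your proposal departs from that strategy at the decisive step.

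Your reduction to $\pi$ new at both $p$ and $q$, the identification of the character group $X_q$ with forms on the definite quaternion algebra ramified at $\{q,\infty\}$ of Eichler level $pN$, the nonvanishing of $(X_q)_\gothm$, and the reduction to killing its $p$-new part are all fine. The gap is in the final paragraph. The Grothendieck monodromy pairing is a pairing $X_q\times X_q\to\ZZ$ (and separately $X_p\times X_p\to\ZZ$); it does not give a map between $(X_q)_\gothm$ and $(X_p)_\gothm$, so it cannot match $p$-new vectors on one side with $q$-new vectors on the other. Ihara's lemma for the definite algebra $B'$ ramified at $\{p,\infty\}$ is a statement about degeneracy maps at primes away from $p$; it does not control the $p$-new subspace of $(X_q)_\gothm$. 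As written the bridge between the two definite quaternion pictures is missing.

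That bridge is exactly the object the paper singles out: the Shimura curve $X^{B}_{\Gamma_0(N)}$ for the \emph{indefinite} quaternion algebra $B=B_{pq}$. The paper's point is that the singular locus of $X^{B}_{\Gamma_0(N)}$ over $\bar\FF_p$ and the singular locus of $X_0(pqN)$ over $\bar\FF_q$ are parametrized by the \emph{same} double coset for $B^{\prime}_{q\infty}$, so their vanishing cycles are Hecke-equivariantly identified; by symmetry the same holds with $p$ and $q$ exchanged. In the $\GSp_4$ proof this identification (Corollary~\ref{pq-sing}) is fed into the Picard--Lefschetz formula: under the contradiction hypothesis one computes $\dim_k\bigoplus_{\Sigma_p(B)}R^3\Phi(\VV)_{x,\gothm}\otimes k=2t$ and $\dim_k\bigoplus_{\Sigma_q(K(pq))}R^3\Phi(\VV)_{x,\gothm}\otimes k=s$, hence $s=2t$, and the symmetric computation gives $t=2s$, forcing $s=t=0$. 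The $\GL_2$ argument has the same architecture; the indefinite Shimura curve is the intermediary you tried to manufacture out of the monodromy pairing and Ihara, and without it the $p,q$-switch does not close.
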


The existence of the auxiliary prime $q$ allows Ribet to use the forementioned $(p,q)$-switch trick in a simplified way. The crucial observation for the $(p,q)$-switch trick is that the singular locus of a Shimura curve associated to a suitable indefinite quaternion algebra and the singular locus of a suitable modular curve can be parametrized by the same discrete Shimura set. We make this observation more precise now. Let $\mathbb{A}$ be the ring of ad\`{e}les over $\QQ$.  Let $\rmB=\rmB_{pq}$ be an indefinite quaternion algebra with discriminant $pq$. Let $\rmX(\rmB)$ (for the moment) be the Shimura curve associated to $\rmB$ with level $\Gamma_{0}(\rmN)$ and let $\rmX_{0}(pq)$ be the usual modular curve with $\Gamma_{0}(pq\rmN)$ level structure. Consider the special fiber  
$\overline{\rmX}(\rmB)$ of $\rmX(\rmB)$ at $p$. The singular locus of $\overline{\rmX}(\rmB)$ can be identified with the double coset
\begin{equation*}
\rmZ_{\Iw}(\overline{\rmB})=\overline{\rmB}^{\times}(\QQ)\backslash \overline{\rmB}^{\times}(\mathbb{A}^{(\infty)})/\rmU^{\prime}(pq)\rmU_{0}(\rmN) 
\end{equation*}
where $\overline{\rmB}=\rmB_{q\infty}$ is the definite quaternion algebra with discriminant $q$, $\rmU_{0}(\rmN)$ is the tame level structure corresponding to $\Gamma_{0}(\rmN)$ and $\rmU^{\prime}(pq)={\rm{Iw}}(p)\calO^{\times}_{\rmD^{\prime}}$ where ${\rm{Iw}}(p)$ is the Iwahori subgroup of $\GL_{2}(\QQ_{p})$ and $\calO_{\rmD^{\prime}}$ is the maximal order of $\rmD^{\prime}=\overline{\rmB}_{q}$.  On the other hand, the singular locus of $\overline{\rmX}_{0}(pq)_{\overline{\FF}_{q}}$ at the prime $q$ can be identified with the same double coset. Therefore the vanishing cycles on $\overline{\rmX}(\rmB)_{\overline{\FF}_{p}}$ and on $\overline{\rmX}_{0}(pq)_{\overline{\FF}_{q}}$ can be identified. The starting point of this article is the observation that the same phenomenon happens for the paramodular Siegel threefold and its quaternionic analogue. 

\subsection{Main results} Let $\pi$ be a cuspidal automorphic representation of $\GSp_{4}(\mathbb{A})$ which is of general type in the sense of Arthur \cite{Arthur-GSp} and whose weight is $(k_{1}, k_{2})$ satisfying $k_{1}\geq k_{2}\geq 3$ and $k_{1}\equiv k_{2}\mod 2$. Then one can attach a Galois representation
\begin{equation*}
\rho_{\pi, \lambda}: \rmG_{\QQ}\rightarrow \GSp_{4}(E_{\lambda})
\end{equation*}
for a finite extension $E_{\lambda}$ over $\QQ_{l}$. We will denote by 
\begin{equation*}
\overline{\rho}_{\pi,\lambda}:  \rmG_{\QQ}\rightarrow \GSp_{4}(k)
\end{equation*}
its associated residual representation valued in the residue field $k$ of $E_{\lambda}$. In fact, this Galois representation is realized in the middle degree cohomology of the Siegel modular threefold. More precisely, it is realized in $\Hom_{\GSp_{4}(\mathbb{A}^{(\infty)})}(\pi_{f}, \rmH^{3}_{\rmc}(\rmX\otimes{\overline{\QQ}}, \VV))$ where $\rmH^{3}_{\rmc}(\rmX\otimes{\overline{\QQ}}, \VV)=\varinjlim_{\rmU}\rmH^{3}_{\rmc}(\rmX_{\rmU}\otimes{\overline{\QQ}}, \VV)$ is the compact support cohomology of the Siegel modular threefold indexed by the open compact subgroups $\rmU$ of $\GSp_{4}(\mathbb{A}^{(\infty)})$ and $\VV$ is a suitable \'etale local system which we will make it more precise in the main body of the article. Recall that there are two maximal parahoric subgroups of $\GSp_{4}(\QQ_{p})$ up to isomorphism, one is known as the hyperspecial subgroup which we will denote by $\rmH$ and the other is known as the paramodular subgroup which we will denote by $\Pa(p)$. 
Suppose that the local component of $\pi$ at $p$ is paraspherical and ramified which means that $\pi$ is of type $\mathrm{IIa}$ in the table of Schmidt \cite{Schmi05}.  We will let $\rmU=\Pa(p)\rmU^{(p)}$ where $\rmU^{(p)}\subset \GSp_{4}(\mathbb{A}^{(\infty p)})$ such that $\rmU$ is a neat open compact subgroup such that $\pi^{\rmU}\neq 0$. Let $\rmX_{\Pa}(p)=\rmX_{\rmU}$ be the Siegel modular threefold with paramodular level at $p$. Let $\TT$ be a suitable Hecke algebra acting on the middle degree cohomology of $\rmX_{\Pa}(p)$ and $\frakm$ be the maximal ideal in $\TT$ corresponding to $\overline{\rho}_{\pi, \lambda}$. Our first result is the analogue of Mazur's principle for Galois representations occurring in the middle degree cohomology of $\rmX_{\Pa}(p)$.
\begin{Thm}\label{Mz-principle}
Let $\pi$ be a cuspidal automorphic representation of $\GSp_{4}(\mathbb{A})$ which is of general type and whose weight is $(k_{1}, k_{2})$ satisfying $k_{1}\geq k_{2} \geq 3$ and $k_{1}\equiv k_{2}\mod 2$. We assume that $l+1\geq k_{1}+k_{2}$.  Let $p$ be a prime distinct from $l$ and such that $p\not\equiv 1 \mod l$. Let $\rmU=\Pa(p)\rmU^{p}\subset \GSp_{4}(\QQ_{p})\GSp_{4}(\mathbb{A}^{(\infty p)})$ be a neat open compact subgroup such that $\pi^{\rmU}\neq 0$. We assume that $\pi_{p}$ is ramified of type $\mathrm{IIa}$ in the table of Schmidt. Let $\frakm$ be the maximal ideal associated to $\overline{\rho}_{\pi,\lambda}$. Suppose the residual Galois representation $\overline{\rho}_{\pi, \lambda}$ satisfies the following assumptions.
\begin{enumerate}
\item $\overline{\rho}_{\pi, \lambda}$ is unramified at $p$;
\item $\overline{\rho}_{\pi, \lambda}$ is absolutely irreducible and the image of $\overline{\rho}_{\pi, \lambda}(\rmG_{\QQ})$ contains $\GSp_{4}(\FF_{l})$;
\item $\rmH^{i}_{\rmc}(\rmX_{\Pa}(p)\otimes \overline{\QQ}, \VV)_{\frakm}$ is concentrated in degree $3$;
\item $\rmH^{3}_{\rmc}(\rmX_{\Pa}(p)\otimes \overline{\QQ}, \VV)_{\frakm}\otimes k$ is semisimple as a Galois module.
\end{enumerate}
Then there exists a cuspidal automorphic representation $\breve{\pi}$ with the same weight as $\pi$ such that 
\begin{equation*}
\overline{\rho}_{\pi, \lambda}\cong \overline{\rho}_{\breve{\pi}, \lambda}
\end{equation*}
and such that $\breve{\pi}_{p}$ is an unramified principal series.
\end{Thm} 

We remark that in a pioneering work \cite{VH19},  van Hoften was able to prove the Mazur's principle in this setting with different assumptions. In particular we are not relying on the vanishing results of Lan--Suh \cite{LS13} and thus we allow more general weights for the \'etale local system $\VV$. In addition, we do not require that the Frobenius eigenvalues of $\overline{\rho}_{\pi, \lambda}$ at $p$ to be distinct instead we only assume that $p\not\equiv 1 \mod l$. However we introduce the additional assumption that  $\rmH^{3}_{\rmc}(\rmX_{\Pa}(p)\otimes\overline{\QQ}, \VV)_{\frakm}\otimes k$ is semisimple.  The assumption is the analogue of the main result of \cite{BLR91}. The proof of \cite{BLR91} relies on the fact the representation of interest is two dimensional and therefore their methods do not generalize directly to our setting. We also introduce the additional assumption $\rmH^{i}_{\rmc}(\rmX_{\Pa}(p)\otimes \overline{\QQ}, \VV)_{\frakm}$ is concentrated in degree $3$. This holds true if we impose further conditions on the maximal ideal $\frakm$ or on the weights of $\VV$.  We will discuss this point in Remark \ref{CS-van}. The most important step in proving the Mazur's principle is to show that the cohomology group $\rmH^{2}_{\rmc}(\overline{\rmX}_{\Pa}(p)\otimes\overline{\FF}_{p}, \VV)_{\frakm}$ vanishes. For this, we follow the strategy of \cite{VH19} closely. Comparing to our work \cite{Wang} in quaternionic setting, the reason for this vanishing seems to be the lack of Tate cycles coming from the supersingular locus. Indeed, the supersingular locus of $\overline{\rmX}_{\Pa}(p)$ is one dimensional and thus can not contribute to the degree $2$ cohomology. This is in contrast to the case of the quaternionic unitary Shimura variety which we will discuss later.

Next we state our result on Ribet's principle. For this, let $p,q$ be two distinct primes which are different from $l$ but now $p$ could be congruent to $1$ modulo $l$. We let $\rmU^{pq}$ be an open compact subgroup of $\GSp_{4}(\mathbb{A}^{(\infty pq)})$ such that $\rmU=\Pa(p)\Pa(q)\rmU^{(pq)}$ is a neat open compact subgroup of  $\GSp_{4}(\mathbb{A}^{(\infty)})$ such that $\pi^{\rmU}\neq 0$.   We consider the Siegel modular threefold $\rmX_{\Pa}(pq)=\rmX_{\rmU}$ with paramodular level at $p$ and $q$ . We will refer to following theorem as the Ribet's principle.

\begin{Thm}
Let $\pi$ be a cuspidal automorphic representation of $\GSp_{4}(\mathbb{A})$ of general type and whose weight is $(k_{1}, k_{2})$ satisfying $k_{1}\geq k_{2} \geq 3$ and $k_{1}\equiv k_{2}\mod 2$. We assume that $l+1\geq k_{1}+k_{2}$. Let $p, q$ be two distinct primes different from $l$. Let $\rmU=\Pa(p)\Pa(q)\rmU^{pq}$ be a neat open compact subgroup such that $\pi^{\rmU}\neq 0$. We assume that $\pi_{p}$ and $\pi_{q}$ are both ramified of type $\mathrm{IIa}$. Suppose the residual Galois representation $\overline{\rho}_{\pi, \lambda}$ satisfies the following assumptions.
\begin{enumerate}
\item $\overline{\rho}_{\pi, \lambda}$ is unramified at $p$ with generic Hecke parameters;
\item $\overline{\rho}_{\pi, \lambda}$ is ramified at $q$;
\item $\overline{\rho}_{\pi, \lambda}$ is absolutely irreducible and $\overline{\rho}_{\pi,\lambda}(\rmG_{\QQ})$ contains $\GSp_{4}(\FF_{l})$;
\item $\rmH^{i}_{\rmc}(\rmX_{\Pa}(pq)\otimes\overline{\QQ}, \VV)_{\frakm}$ and $\rmH^{i}_{\rmc}(\rmX(\rmB)\otimes\overline{\QQ}, \VV)_{\frakm}$ are concentrated in degree $3$;
\item $\rmH^{3}_{\rmc}(\rmX_{\Pa}(pq)\otimes\overline{\QQ}, \VV)_{\frakm}\otimes k$ is a semisimple Galois module. 
\end{enumerate}
Then there exists a cuspidal automorphic representation $\breve{\pi}$ with the same weight as $\pi$ and such that 
\begin{equation*}
\overline{\rho}_{\pi, \lambda}\cong \overline{\rho}_{\breve{\pi}, \lambda} 
\end{equation*}
and $\breve{\pi}_{p}$ is an unramified principal series.
\end{Thm} 

Although we allow $p\equiv 1\mod l$, we need to impose a mild assumption called generic on the Hecke parameters of $\pi$ at $p$. The precise definition of this assumption will be given in Definition \ref{generic-at-p}. The proof of this theorem relies heavily on the interplay between the geometry of the Siegel modular threefold $\rmX_{\Pa}(pq)$ and its quaternionic analogue $\rmX(\rmB)$. Here the quaternion algebra $\rmB$ is again the indefinite quaternion algebra over $\QQ$ with discriminant $pq$ as in the $\GL_{2}$-case but $\rmX(\rmB)$ is now the Shimura variety associated to the quaternionic unitary group $\GU_{2}(\rmB)$ of degree $2$. We will rely on the description of the supersingular locus of $\overline{\rmX}(\rmB)_{\overline{\FF}_{p}}$ in \cite{Wanga} which is also obtained by Oki \cite{Oki} independently. In particular the starting point of this work is the observation that the singular locus of $\rmX_{\Pa}(pq)_{\overline{\FF}_{p}}$ and the singular locus $\rmX(\rmB)_{\overline{\FF}_{q}}$ are parametrized by the same discrete Shimura set and therefore one can identify the space of vanishing cycles on $\rmX_{\Pa}(pq)_{\overline{\FF}_{p}}$ and the space of vanishing cycles on $\rmX(\rmB)_{\overline{\FF}_{q}}$. Under this observation, we assume that the level of $\overline{\rho}_{\pi,\lambda}$ can not be lowered and we compare the dimension of the space of vanishing cycles on $\rmX_{\Pa}(pq)$ and $\rmX(\rmB)$ at the primes $p$ and $q$ to derive a contradiction. We will also use the fact that in this case the Tate cycles coming from the supersingular locus indeed generate the space $\rmH^{2}_{\rmc}(\overline{\rmX}(\rmB)\otimes \overline{\FF}_{p}, \VV)_{\frakm}$ once the Hecke parameters of $\pi$ is generic. This is in contrast to the situation of $\overline{\rmX}_{\Pa}(p)$ mentioned before. Indeed, the supersingular locus of $\overline{\rmX}(\rmB)$ is two dimensional and hence does contribute to $\rmH^{2}_{\rmc}(\overline{\rmX}(\rmB)\otimes \overline{\FF}_{p}, \VV)$.

Note that the auxiliary assumption that $\overline{\rho}_{\pi, \lambda}$ is ramified at $q$ simplify the situation significantly but can be removed once we impose further assumptions on the residual Galois representation.  This is the content of the main level lowering theorem in this article. 

\begin{Thm}
Let $\pi$ be a cuspidal automorphic representation of $\GSp_{4}(\mathbb{A})$ of general type and whose weight is $(k_{1}, k_{2})$ with  $k_{1}\geq k_{2}\geq 3$ and $k_{1}\equiv k_{2}\mod 2$. Suppose $l+1\geq k_{1}+k_{2}$. Let $p$ be a prime different from $l$. Let $\rmU=\Pa(p)\rmU^{(p)}$ with $\rmU^{(p)}\subset \GSp_{4}(\mathbb{A}^{(\infty p)})$ such that $\rmU$ is a neat open compact subgroup such that $\pi^{\rmU}\neq 0$. Suppose that $\pi_{p}$ is ramified of type $\mathrm{IIa}$. Suppose also that the residual Galois representation $\overline{\rho}_{\pi, \lambda}$ satisfies the following assumptions.
\begin{enumerate}
\item  $\overline{\rho}_{\pi, \lambda}$ is unramified with generic Hecke parameters at $p$;
\item  $\overline{\rho}_{\pi,\lambda}$ is rigid as in Definition \ref{rigid};
\item  $\rmH^{i}_{\rmc}(\rmX_{\Pa}(pq)\otimes\overline{\QQ}, \VV)_{\frakm}$ and $\rmH^{i}_{\rmc}(\rmX(\rmB)\otimes\overline{\QQ}, \VV)_{\frakm}$ are concentrated in degree $i=3$;
\item $\rmH^{3}_{\rmc}(\rmX_{\Pa}(pq)\otimes\overline{\QQ}, \VV)_{\frakm}\otimes k$ is a semisimple Galois module; 
\item $\overline{\rho}_{\pi,\lambda}$ is absolutely irreducible and the image of $\overline{\rho}_{\pi,\lambda}(\rmG_{\QQ})$ contains $\GSp_{4}(\FF_{l})$.
\end{enumerate}
Then there exists a cuspidal automorphic representation $\breve{\pi}$of general type with the same weight as $\pi$ such that 
\begin{equation*}
\overline{\rho}_{\pi, \lambda}\cong \overline{\rho}_{\breve{\pi}, \lambda} 
\end{equation*}
and $\breve{\pi}_{p}$ is an unramified principal series.
\end{Thm} 

To prove this theorem, first we exploit the level raising theorem in \cite[Theorem 9.9]{Wang} to obtain an automorphic representation which is congruent to $\pi$ and occurs in the cohomology of $\rmX(\rmB)$ as well as in the cohomology of 
$\rmX_{\Pa}(pq)$. Then we proceed similarly as in the proof of the Ribet's principle to calculate the dimensions of the spaces of vanishing cycles at $p$ and $q$ for these Shimura varieties. This calculation is complicated by the fact that the maximal ideal $\frakm$ corresponding to $\overline{\rho}_{\pi, \lambda}$ occurs both in the space of $q$-newforms and $q$-oldforms. Fortunately, our main arithmetic level raising result on $\rmX(\rmB)$ \cite{Wang} which is finer than the level raising theorem exactly enables us to overcome this difficulty. 

\subsection{Survey of level lowering results} 
We survey  some results in the literature on level lowering theorems. We do not attempt to exhaust all the results in this direction but only those the author is aware of.
\begin{itemize}
\item In the case of Hilbert modular forms,  Jarvis \cite{Jar99} proved Mazur's principle for Shimura curves over totally real field and  Rajaei \cite{Raj01} generalizes Ribet's method in this setting. See also the work of Fujiwara \cite{Fuji} in this direction. Note that there are also other techniques to prove level lowering results for Hilbert modular forms, notably the work of Skinner--Wiles \cite{SW01} proves a potential level lowering theorem. 
\item In the setting of Kottwitz--Harris--Taylor type Shimura varieties, the Mazur's principle is proved for the $\rmU(2, 1)$-case by Helm in \cite{Helm} and by Boyer \cite{Boyer} for the $\rmU(n-1, 1)$-case.
\item In the setting of $\GSp_{4}$, Sorensen \cite{Sor09a} is able to prove a potential level lowering result using the techniques of \cite{SW01}. More precisely, he establishes a congruence between representations that admits Iwahori fixed vectors and certain tamely ramified principal series. 
\item Using techniques from modularity lifting theorems, one can also prove level raising and level lowering theorems. For automorphic representations of $\GSp_{4}$ over totally real fields, one can deduce level lowering results from the main theorem of \cite{GG}. See also \cite{Gee} for the case of Hilbert modular forms and the case for $\GL_{n}$.
\end{itemize}

\subsection{Notations and conventions}  We will use common notations and conventions in algebraic number theory and algebraic geometry. The cohomology  of schemes appear in this article will be understood as computed over the \'{e}tale sites. 

For a field $\rmK$, we denote by $\overline{\rmK}$ a separable closure of $\rmK$ and put $\rmG_{\rmK}:=\Gal(\overline{\rmK}/\rmK)$ the Galois group of $\rmK$. Suppose $\rmK$ is a number field and $v$ is a place of $\rmK$, then $\rmK_{v}$ is the completion of $\rmK$ at $v$ with valuation ring $\calO_{v}$ whose maximal ideal is also written as $v$ if no danger of confusion could arise. Let $\mathrm{Art}:\rmK^{\times}_{v}\rightarrow \mathrm{W}^{\mathrm{ab}}_{\rmK}$ be the local Artin map sending the uniformizers to the geometric Frobenius element $\Frob_{p}$. We let $\phi_{v}$ be the arithmetic Frobenius element at $v$. We let $\mathbb{A}_{\rmK}$ be the ring of ad\`{e}les over $\rmK$ and $\mathbb{A}^{(\infty)}_{\rmK}$ be the subring of finite ad\`{e}les.  Let $l\geq 3$ be a prime and we fix an isomorphism $\iota_{l}: \CC\cong \overline{\QQ}_{l}$. We will denote by $\epsilon_{l}$ the $l$-adic cyclotomic character. The prime $l$ will be the residue characteristic of the coefficient

Let $\rmK$ be a local field with ring of integers $\calO_{\rmK}$ and residue field $k$. We let $\rmI_{\rmK}$ be the inertia subgroup of $\rmG_{K}$. Suppose $\rmM$ is a $\rmG_{\rmK}$-module. Then the finite part $\rmH^{1}_{\mathrm{fin}}(\rmK, \rmM)$ of $\rmH^{1}(\rmK, \rmM)$ is defined to be $\rmH^{1}(k, \rmM^{\rmI_{\rmK}})$ and the singular part $\rmH^{1}_{\mathrm{sing}}(\rmK, \rmM)$ of $\rmH^{1}(\rmK, \rmM)$ is defined to be the quotient of $\rmH^{1}(\rmK, \rmM)$ by the image of $\rmH^{1}_{\mathrm{fin}}(\rmK, \rmM)$. 

Let $p$ be a prime and let $\FF$ be an algebraically closed field containing $\FF_{p}$. We denote by $\sigma$ be the Frobenius element on $\FF$. Let $\rmW_{0}=\rmW(\FF)$ be the ring of Witt vectors of $\FF$ and $\rmK_{0}=\rmW(\FF)_{\QQ}$ its fraction field, then $\sigma$ extends to an action on $\rmW(\FF)$. Let $\FF_{p^{d}}$ be the finite field of $p^{d}$ elements. Then $\ZZ_{p^{d}}$ is defined to be $\rmW(\FF_{p^{d}})$. Let $\rmM_{1}\subset \rmM_{2}$ be two $\rmW_{0}$-modules we wrire $\rmM_{1}\subset^{d} \rmM_{2}$ if the $\rmW_{0}$ colength of the inclusion is $d$.

If $\rmR$ is ring and $\rmL$ is an $\rmR$-module and $\rmR^{\prime}$ is an $\rmR$-algebra, we define $\rmL_{\rmR^{\prime}}=\rmL\otimes_{\rmR} \rmR^{\prime}$. Let $\rmX$ be a scheme or a formal scheme over $\rmR$, we write $\rmX_{\rmR^{\prime}}$ its base change to $\rmR^{\prime}$. Let $\rmM_{1}\subset \rmM_{2}$ be two $\rmR$-modules, then we write $\rmM_{1}\subset_{\mathrm{Gr}\rmM}\rmM_{2}$ if $\rmM_{2}/\rmM_{1}=\mathrm{Gr}\rmM$.

For our convention, $\GSp_{4}$ be the reductive group over $\ZZ$ defined by 
\begin{equation*}
\GSp_{4}=\{g\in\GL_{4}: g\rmJ g^{t}=\rmc(g)\rmJ\}
\end{equation*}
where $\rmJ$ is the antisymmetric matrix given by $\begin{pmatrix}0&s\\-s&0\\ \end{pmatrix}$
for $s=\begin{pmatrix}0&1\\ 1&0 \\ \end{pmatrix}$ and where $\rmc$ is the similitude character of $\GSp_{4}$. By definition, we have an exact sequence 
\begin{equation*}
1\rightarrow \Sp_{4}\rightarrow \GSp_{4}\xrightarrow{\rmc}\GG_{m}\rightarrow 1.
\end{equation*}

\subsection{Acknowledgement.} This work started when the author was a postdoctoral fellow at McGill university and he would like to thank Henri Darmon and Pengfei Guan for their generous support during a difficult time. The author would like to thank Marc-Hubert Nicole for informing him the work of 
van Hoften \cite{VH19} and discussions about the Siegel modular threefold with paramodular level structure. The author is grateful to Kai-Wen Lan, Yifeng Liu and Liang Xiao for helpful conversations and correspondences. It is needless to say that any remaining errors or inaccuracies are due to the author himself.

\section{Review of nearby and vanishing cycles}\label{review-cyc}
\subsection{Nearby and vanishing cycles}
Let $S=\Spec(R)$ be the spectrum of a henselian DVR. We choose a uniformizer $\pi$ of $R$. We denote by $\rms$ the closed point of $S$ and by $\eta$ the generic point of $\rmS$. We assume the residue field $k(s)$ at $s$ is of characteristic $p$. Let $\overline{s}$ be a geometric point of $S$ over $s$ and $\overline{\eta}$ be a seperable closure of $\eta$. For a morphsim $f: \rmX\rightarrow S$, we obtain the following maps
\begin{equation*}
\rmX_{\overline{s}}\xrightarrow{\overline{i}} \rmX_{\overline{S}} \xleftarrow{\overline{j}} \rmX_{\overline{\eta}}\\
\end{equation*}
by base change.
Let $\rmK\in \rmD^{+}(\rmX_{\overline{\eta}}, \Lambda)$ with coefficient in $\Lambda=\calO_{\lambda}/ \lambda^{m}$ or $\calO_{\lambda}$, then we define the \emph{nearby cycle complex} $\rmR\Psi(K)\in \rmD^{+}(\rmX_{\overline{s}}, \Lambda)$ by
\begin{equation*}
\rmR\Psi(\rmK)= \overline{i}^{*}\rmR\overline{j}_{*}(\rmK\vert\rmX_{\overline{\eta}}).
\end{equation*}
For $\rmK\in \rmD^{+}(\rmX, \Lambda)$, the adjunction map defines the following distinguished triangle
\begin{equation*}
\rmK\vert\rmX_{\overline{s}}\rightarrow \rmR\Psi(\rmK\vert\rmX_{\overline{\eta}})\rightarrow \rmR\Phi(\rmK)\rightarrow
\end{equation*}
The complex $\rmR\Phi(\rmK)$ is known as the \emph{vanishing cycle complex} and is the cone of the previous map. If $f$ is proper, then we have 
\begin{equation*}
\rmR\Gamma(\rmX_{\overline{\eta}}, \rmK\vert\rmX_{\overline{\eta}})= \rmR\Gamma(\rmX_{\overline{s}}, \rmR\Psi(\rmK\vert\rmX_{\overline{\eta}}))
\end{equation*}
and in general we always have the following long exact sequence 
\begin{equation*}
\cdots \rightarrow \mathrm{H}^{i}(\rmX_{\overline{s}}, \rmK\vert\rmX_{\overline{s}})\xrightarrow{\rm{sp}}\mathrm{ H}^{i}(\rmX_{\overline{s}}, \rmR\Psi(\rmK\vert\rmX_{\overline{\eta}})) \rightarrow  \mathrm{H}^{i}(\rmX_{\overline{s}}, \rmR\Phi(\rmK))\rightarrow \cdots.
\end{equation*}
The first map $\rm{sp}$ is called the specialization map and the cohomology of vanishing cycles  $\mathrm{H}^{i}(\rmX_{\overline{s}}, \rmR\Phi(\rmK))$ measures the defect of $\rm{sp}$ from being an isomorphism. If we assume that $f$ is smooth, then  $\mathrm{H}^{i}(\rmX_{\overline{s}}, \rmR\Phi(\rmK))$ vanishes and the specialization map is an isomorphism. 
\subsection{Isolated singularities} Suppose that $f: \rmX\rightarrow S$ is a regular, flat, finite type morphism of relative dimension $n$ which is smooth outside a finite collection $\Sigma$ of closed points in $\rmX_{\overline{s}}$. Let $\VV$ be a smooth $\calO_{\lambda}$-sheaf. In this case we have \begin{equation*} 
\rmR\Phi(\VV)|\rmX_{\overline{s}}-\Sigma=0
\end{equation*}
and moreover
\begin{equation*}
\rmR\Phi(\VV)=\bigoplus_{\sigma\in \Sigma} \rmR^{n}\Phi_{\sigma}(\VV)
\end{equation*}
is concentrated at degree $n$. Therefore we have the following  exact sequence
\begin{equation}\label{van-cycle-ext}
\begin{split}
&0\rightarrow \mathrm{H}^{n}(\rmX_{\overline{s}}, \VV)\xrightarrow{\rm{sp}}  \mathrm{H}^{n}(\rmX_{\overline{s}}, \rmR\Psi(\VV)) \xrightarrow{\mathrm{ca}} \bigoplus_{\sigma\in \Sigma} \rmR^{n}\Phi_{\sigma}(\VV)\xrightarrow{\alpha}\\ 
&\mathrm{H}^{n+1}(\rmX_{\overline{s}},\VV)\rightarrow \mathrm{H}^{n+1}(\rmX_{\overline{s}}, \rmR\Psi(\VV))\rightarrow 0.
\end{split}
\end{equation}

Assume next that for every $\sigma\in \Sigma$ is an ordinary quadratic singularity. This means $\rmX$ is {\et} locally near $x$ isomorphic to 
\begin{itemize}
\item  $\rmV(\Sigma_{1\leq i\leq m}\rmZ_{i}\rmZ_{i+m}+\pi)\subset \mathbb{A}^{2m}_{S}$ if $n=2m-1$;
\item  $\rmV(\Sigma_{1\leq i\leq m}\rmZ_{i}\rmZ_{i+m}+\rmZ^{2}_{2m+1}+\pi)\subset \mathbb{A}^{2m+1}_{S}$ if $n=2m$.
\end{itemize}
Then in this case 
\begin{equation*}
\rmR^{n}\Phi_{\sigma}(\VV)\cong\VV
\end{equation*} noncanonically. Let $I_{\eta}\subset \Gal(\overline{\eta}/\eta)$ be  the inertia group and let $\xi\in I_{\eta}$. Then we have the  variation morphism
$\mathrm{Var}_{\sigma}(\xi): \rmR^{n}\Phi_{\sigma}(\VV)\rightarrow \mathrm{H}^{n}_{\{\sigma\}}(\rmX_{\overline{s}}, \rmR\Psi(\VV) )$ and the action of $\xi-1$ on $\rmH^{n}(\rmX_{\overline{\eta}}, \VV)$ can be factored as 
\begin{equation*}
\begin{tikzcd}
\mathrm{H}^{n}(\rmX_{\overline{\rms}}, \rmR\Psi(\VV)) \arrow[r] \arrow[d, "\xi-1"] & \bigoplus\limits_{\sigma\in\Sigma}\rmR^{n}\Phi_{\sigma}(\VV) \arrow[d, "\mathrm{Var}_{\sigma}(\xi)"]\arrow[r, "\alpha"]  &\mathrm{H}^{n+1}(\rmX_{\overline{s}}, \VV))\\
 \mathrm{H}^{n}_{\rmc}(\rmX_{\overline{\rms}}, \rmR\Psi(\VV))              & \bigoplus\limits_{\sigma\in\Sigma}\rmH^{n}_{\{\sigma\}}(\rmX_{\overline{s}}, \rmR\Psi(\VV))\arrow[l]     &\mathrm{H}^{n-1}_{\rmc}(\rmX_{\overline{s}}, \VV)\arrow[l, "\beta"]       
\end{tikzcd}
\end{equation*}

Here the map in lowerline is the composite 
\begin{equation*}
\bigoplus_{\sigma\in\Sigma}\rmH^{n}_{\{\sigma\}}(\rmX_{\overline{s}}, \rmR\Psi(\VV))\rightarrow \rmH^{n}_{\rmc}(\rmX_{\overline{s}}, \VV)\rightarrow  \rmH^{n}_{\rmc}(\rmX_{\overline{s}}, \rmR\Psi(\VV))
\end{equation*} 
where the first map is the Gysin map under the identification 
\begin{equation*}
\bigoplus_{\sigma\in\Sigma}\rmH^{n}_{\{\sigma\}}(\rmX_{\overline{s}}, \rmR\Psi(\VV))=\bigoplus_{\sigma\in\Sigma}\rmH^{n}_{\{\sigma\}}(\rmX_{\overline{s}},\VV)
\end{equation*}
and the second map is the specialization map. One has also a Frobenius equivariant version
\begin{equation*}
\begin{tikzcd}
\mathrm{H}^{n}(\rmX_{\overline{\rms}}, \rmR\Psi(\VV)(1)) \arrow[r] \arrow[d, "{\rmN}"] & \bigoplus\limits_{\sigma\in\Sigma}\rmR^{n}\Phi_{\sigma}(\VV(1)) \arrow[d, "\rmN_{\Sigma}"] \arrow[r, "\alpha"]  &\mathrm{H}^{n+1}(\rmX_{\overline{s}}, \VV)) \\
\mathrm{H}^{n}_{\rmc}(\rmX_{\overline{\rms}}, \rmR\Psi(\VV))           &\bigoplus\limits_{\sigma\in\Sigma}\rmH^{n}_{\{\sigma\}}(\rmX_{\overline{s}}, \rmR\Psi(\VV))\arrow[l] &\mathrm{H}^{n-1}_{\rmc}(\rmX_{\overline{s}}, \VV)\arrow[l, "\beta"]       
\end{tikzcd}
\end{equation*}
where $\rmN$ is the monodromy operator and $\rmN_{\Sigma}=\oplus_{\sigma}\rmN_{\sigma}$ is the sum of the local monodromy operator $\rmN_{\sigma}: \rmR^{n}\Phi_{\sigma}(\VV(1))\rightarrow \rmH^{n}_{\{\sigma\}}(\rmX_{\overline{s}}, \rmR\Psi(\VV))$.  We will loosely refer to this diagram as the Picard--Lefschetz formula for $\rmH^{n}_{\rmc}(\rmX_{\overline{s}}, \rmR\Psi(\VV))$. Note that the monodromy filtration of  $\mathrm{H}^{n}_{\rmc}(\rmX_{\overline{s}}, \rmR\Psi(\VV))$ is then determined by the above Picard--Lefschetz formula
\begin{equation*}
0\subset_{\mathrm{Gr}_{-1}}\mathrm{F}_{-1}\mathrm{H}^{n}_{\rmc}(\rmX_{\overline{s}}, \rmR\Psi(\VV)) \subset_{\mathrm{Gr}_{0}} \mathrm{F}_{0}\mathrm{H}^{n}_{\rmc}(\rmX_{\overline{s}}, \rmR\Psi(\VV)) \subset_{\mathrm{Gr}_{1}} \mathrm{F}_{1}\mathrm{H}^{n}_{\rmc}(\rmX_{\overline{s}}, \rmR\Psi(\VV)).
\end{equation*}
The successive quotient of the filtration is given by
\begin{equation}\label{mono-fil}
\begin{aligned}
&\mathrm{Gr}_{-1}=\mathrm{Gr}_{-1}\mathrm{H}^{n}_{\rmc}(\rmX_{\overline{s}}, \rmR\Psi(\VV))= \Coker(\beta)\\
&\mathrm{Gr}_{0}=\mathrm{Gr}_{0}\mathrm{H}^{n}_{\rmc}(\rmX_{\overline{s}}, \rmR\Psi(\VV))= \frac{\mathrm{H}^{n}_{\rmc}(\rmX_{\overline{s}}, \VV)}{\bigoplus_{\sigma\in\Sigma}\rmH^{n}_{\{\sigma\}}(\rmX_{\overline{s}}, \rmR\Psi(\VV))}\\
&\mathrm{Gr}_{1}=\mathrm{Gr}_{1}\mathrm{H}^{n}_{\rmc}(\rmX_{\overline{s}}, \rmR\Psi(\VV))=\Ker(\alpha)(-1).\\
\end{aligned}
\end{equation}
The monodromy map $\mathrm{N}$ factor through 
\begin{equation*}
\mathrm{H}^{n}(\rmX_{\overline{s}}, \rmR\Psi(\VV)(1))\twoheadrightarrow\mathrm{Gr}_{1}\xrightarrow{\mathrm{N}} \mathrm{Gr}_{-1} \hookrightarrow \mathrm{H}^{n}_{\rmc}(X_{\overline{s}}, \rmR\Psi(\VV)).
\end{equation*}

\subsection{Nearby cycles of automorphic \'etale sheaves} Suppose $f:\rmX\rightarrow S$ is proper. Then the proper base change theorem implies that we have an isomorphism 
\begin{equation*}
\rmH^{i}(\rmX_{\overline{\eta}}, \VV)\cong \rmH^{i}(\rmX_{\overline{s}}, \rmR\Psi(\VV)).
\end{equation*}
However if $f: \rmX\rightarrow S$ is not proper, then this does not always hold.
In the setting of Shimura varieties with good compactification, we do have such an isomorphism. In particular all the Shimura varieties we will consider in this article have good compactifications and belong to the case: 
\begin{itemize}
\item[] (Nm) A flat integral model defined by taking normalization of a characteristic $0$ PEL type moduli problem over a product of  good reduction integral models of smooth PEL type moduli problem. 
\end{itemize}
in the classification of \cite{LS18b} and \cite{LS18c}. The following theorem summarizes the results we need. 
\begin{theorem}[{\cite[Corollary 4.6]{LS18b}}]
Suppose $\rmX$ is a Shimura variety that is in the case of $(\mathrm{Nm})$ and let $\VV$ be an automorphic \'etale sheaf defined as in \cite[\S 3]{LS18b}. Then the canonical adjunction morphisms
\begin{equation*}
\rmH^{i}(\rmX_{\overline{\eta}}, \VV)\rightarrow \rmH^{i}(\rmX_{\overline{s}}, \rmR\Psi(\VV))
\end{equation*}
and
\begin{equation*}
\rmH^{i}_{\rmc}(\rmX_{\overline{s}}, \rmR\Psi(\VV))\rightarrow \rmH^{i}_{\rmc}(\rmX_{\overline{\eta}}, \VV)
\end{equation*}
are isomorphisms for all $i$. 
\end{theorem}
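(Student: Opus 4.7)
The plan is to reduce to the proper case via a good toroidal compactification, where proper base change applies directly, and then to transfer the result back to $X$ by showing that nearby cycles commute with the extension-to-compactification functors. Let $j:X\hookrightarrow X^{\tor}$ be a toroidal compactification over $S$, chosen so that $X^{\tor}\to S$ is proper and the boundary $D=X^{\tor}\setminus X$ is, étale locally, a relative normal crossings divisor. Since $X$ is in case (Nm), the existence of $X^{\tor}$ with these properties is guaranteed by the construction of \cite{LS18a}, and the automorphic sheaf $\VV$ admits the canonical and subcanonical extensions $\bar{\VV}^{\mathrm{can}}=j_{*}\VV$ and $\bar{\VV}^{\mathrm{sub}}=j_{!}\VV$ (with the derived versions $Rj_{*}\VV$ and $j_{!}\VV$ concentrated in degree zero after applying the appropriate Pink-type computation on the boundary strata).

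Once the compactification is in place, I would run the following chain of identifications. First, by the vanishing $R^{i}j_{*}\VV=0$ for $i>0$ (which is an input from the theory of automorphic sheaves on toroidal boundary strata), one has
\begin{equation*}
\rmH^{i}(X_{\bar{\eta}},\VV)\cong \rmH^{i}(X^{\tor}_{\bar{\eta}},\bar{\VV}^{\mathrm{can}}),\qquad \rmH^{i}_{c}(X_{\bar{\eta}},\VV)\cong \rmH^{i}(X^{\tor}_{\bar{\eta}},\bar{\VV}^{\mathrm{sub}}),
\end{equation*}
and likewise on the special fibre after applying $R\Psi$. Second, proper base change for the proper morphism $X^{\tor}\to S$ yields
\begin{equation*}
\rmH^{i}(X^{\tor}_{\bar{\eta}},\bar{\VV}^{\mathrm{can}})\cong \rmH^{i}(X^{\tor}_{\bar{s}},R\Psi\,\bar{\VV}^{\mathrm{can}}),
\end{equation*}
and similarly for $\bar{\VV}^{\mathrm{sub}}$. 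The remaining step is to identify $R\Psi$ applied to the canonical (resp.\ subcanonical) extension with the canonical (resp.\ subcanonical) extension of $R\Psi\VV$ along $j_{s}:X_{\bar{s}}\hookrightarrow X^{\tor}_{\bar{s}}$; that is, to prove the commutation formulas
\begin{equation*}
R\Psi\,\bar{\VV}^{\mathrm{can}} \;\cong\; Rj_{s,*}\,R\Psi\VV,\qquad R\Psi\,\bar{\VV}^{\mathrm{sub}} \;\cong\; j_{s,!}\,R\Psi\VV.
\end{equation*}
Granting these, the theorem follows by taking cohomology and combining the two preceding steps.

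The main obstacle is proving the commutation of $R\Psi$ with $Rj_{*}$ and $j_{!}$ along the toroidal boundary in the (Nm) case. The issue is that $X^{\tor}$ is not in general smooth over $S$ when $X$ has bad reduction, so one cannot appeal to a general smooth/proper argument. The strategy, following \cite{LS18a}, is local on $X^{\tor}$: étale locally near a boundary point the completion along the boundary is controlled by a formal completion involving a smaller (good reduction) PEL Shimura variety times a toric piece, as in Pink's description of boundary charts. On such toric/formal charts one can compute $R\Psi$ explicitly and verify that higher nearby cycles along $D$ vanish in the directions transverse to the boundary, which yields the two commutation formulas. This local-to-global analysis, together with the Koszul/log-smooth nature of the boundary, is the heart of the argument; once it is in place the global statement follows by a standard $\delta$-functorial formalism applied to the distinguished triangles $j_{!}\VV\to \bar{\VV}^{\mathrm{can}}\to i_{D,*}i_{D}^{*}\bar{\VV}^{\mathrm{can}}$ and its nearby-cycle counterpart.
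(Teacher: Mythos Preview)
The paper does not give its own proof of this statement: it is quoted verbatim as \cite[Corollary 4.6]{LS18b} and used as a black box. So there is no in-paper argument to compare your proposal against.

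That said, your outline is a faithful sketch of the Lan--Stroh strategy. The three steps you isolate (pass to a toroidal compactification where proper base change applies; identify cohomology of $\VV$ on the open part with cohomology of the canonical/subcanonical extensions on the compactification; prove that $R\Psi$ commutes with $Rj_{*}$ and $j_{!}$ along the boundary) are exactly the skeleton of their proof, and you correctly flag the third step as the substantive one. Two small caveats. First, the input ``$R^{i}j_{*}\VV=0$ for $i>0$'' is not automatic for arbitrary automorphic sheaves and levels; in the Lan--Stroh framework it is built into their definition of automorphic \'etale sheaves and the toroidal boundary charts in the (Nm) case, so you should cite it rather than assert it. Second, your local argument for the commutation $R\Psi\circ Rj_{*}\cong Rj_{s,*}\circ R\Psi$ gestures at the right picture (formal boundary charts fibred over good-reduction Shimura varieties, with the ``bad'' directions absorbed in the toric part), but the actual verification in \cite{LS18a,LS18b} is delicate and uses their specific construction of the integral toroidal models via normalization; a bare appeal to ``log-smoothness of the boundary'' would not suffice in the (Nm) setting where the interior already has singularities. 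If you want this to stand as a proof rather than a sketch, those are the two places that need precise references or arguments.
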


\section{Automorphic and Galois representations for $\GSp_{4}$}
\subsection{The group $\GSp_{4}$}
Let $\GSp_{4}$ be the symplectic similitude group defined by the set of matrices $g$ in $\GL_{4}$ that satisfy $g^{t}\rmJ g=c(g)\rmJ$ for 
\begin{equation*}
\rmJ=\begin{pmatrix}
 & & &1\\
 & & 1&\\
 & -1& &\\
-1 & & &\\
\end{pmatrix}
\end{equation*}
and some $c(g)\in \mathbb{G}_{m}$.  We define the map $c: \GSp_{4}\rightarrow \mathbb{G}_{m}$ by sending $g\in \GSp_{4}$ to $c(g)$ and refer to it as the similitude map. The kernel of this map is by definition the symplectic group $\Sp(4)$.  There are two conjugacy classes of maximal parabolic subgroups of $\GSp_{4}$
given by the \emph{Siegel parabolic subgroup} $\rmP$, whose Levi factor is 
\begin{equation*}
\rmM_{\rmP}=\{\begin{pmatrix}A&\\&u A^{\prime}\\\end{pmatrix}: u\in \GL_{1}, A\in \GL_{2}\}
\end{equation*}
where $A^{\prime}=\begin{pmatrix}&1\\1&\\ \end{pmatrix}(A^{-1})^{t}\begin{pmatrix}&1\\1&\\ \end{pmatrix}$. It is clear that $\rmM_{\rmP}\cong \GL_{1}\times \GL_{2}$. And we have the \emph{Klingen parabolic subgroup} $\rmQ$, whose Levi factor is
\begin{equation*}
\rmM_{\rmQ}=\{\begin{pmatrix}u&\\&A&\\ &&u^{-1}\det(A)\\ \end{pmatrix}: u\in \GL_{1}, A\in \GL_{2}\}.
\end{equation*}
It is clear that $\rmM_{\rmQ}\cong \GL_{1}\times \GL_{2}$.

Let $\rmT$ be the diagonal torus in $\GSp_{4}$ and $\rmX^{*}(\rmT)$ be its character group which we identify with subset $\{(a,b;c)\in\ZZ^{3}: a+b\equiv c \mod 2\}$ of $\ZZ^{3}$ by associating a triple $(a,b;c)$ the character 
\begin{equation}\begin{pmatrix}t_{1}&&&\\&t_{2}&&&\\&&vt^{-1}_{2}&\\&&&vt^{-1}_{1}\\\end{pmatrix}\rightarrow t^{a}_{1}t^{b}_{2}v^{\frac{(c-a-b)}{2}}.\end{equation}
Let $\rmB$ be the Borel subgroup of upper triangular matrices in $\GSp_{4}$. Then the set of dominant weights $\rmX^{*}(\rmT)^{+}$ with respect to $\rmB$ is given explicitly by $\{(a,b;c)\in \rmX^{*}(\rmT): a\geq b\geq 0\}.$

\subsection{Parahoric subgroups of $\GSp_{4}(\QQ_{p})$} The affine Dynkin diagram of type $\tilde{\mathrm{C}}_{2}$ is given by 
\begin{displaymath}
 \xymatrix{\underset{0}\bullet \ar@2{->}[r] &\underset{1}\bullet &\underset{2}\bullet\ar@2{->}[l]}
\end{displaymath}
and correspondingly we have the parahoric subgroups corresponding to each non-empty subsets of $\{0,1,2\}$.
\begin{itemize}
\item The \emph{Iwahori subgroup} $\rmK_{\{0,1,2\}}$: the Iwahori subgroup will also be denoted by $\Iw(p)$. It contains the pro-$p$ Iwahori subgroup which we will denote by $\Iw_{1}(p)$;
\item The \emph{Siegel parahoric subgroup} $\rmK_{\{0,2\}}$: this is obtained from the Iwahori subgroup by adding the affine root group corresponding to $1$; Its reduction is the Siegel parabolic $\rmP(\FF_{p})$ of $\GSp_{4}(\FF_{p})$. The Siegel parhoric subgroup will also be denoted by $\mathrm{Sie}(p)$;
\item The \emph{Kilngen parahoric subgroup} $\rmK_{\{0,1\}}$: this is obtained from the Iwahori subgroup by adding the affine root group corresponding to $2$; Its reduction is the Klingen parabolic $\rmQ(\FF_{p})$ of $\GSp_{4}(\FF_{p})$. The Klingen parahoric subgroup will also be denoted by $\Kl(p)$; 
\item The \emph{paramodular subgroup} $\rmK_{\{1\}}$: this is obtained from the Iwahori subgroup by adding the affine root groups corresponding to $0$ and $2$. The paramodular subgroup will also be denoted by $\Pa(p)$;
\item The \emph{hyperspecial subgroup} $\rmK_{\{0\}}$ and $\rmK_{\{2\}}$: these are obtained from the Iwahori subgroup by adding the affine root groups corresponding to $1, 2$ or $1, 0$. Note that they are conjugate to each other and if we need to identify them we will denote them by $\rmH$.
\end{itemize}

Note that the hyperspecial subgroup and the paramodular subgroup are the maxiamal parahoric subgroups in $\GSp_{4}(\QQ_{p})$, therefore there is no map between paramodular Siegel threefold to the Siegel threefold of hyperspecial level. This is one of the difference between our case and the case of modular curves or Shimura curves. 

\subsection{Some inner forms of $\GSp_{4}$}\label{inner_form} Let $\rmB$ be a quaternion algebra over $\QQ$. If $\rmB$ split at $\infty$, then we call $\rmB$ \emph{indefinite} and otherwise we call $\rmB$ \emph{definite}. For an element $b\in \rmB$, we denote by $\overline{b}$ the image of $b$ under the main involution of $\rmB$. We choose an element $\tau\in \rmB$ such that $\overline{\tau}=-\tau$ if $B$ is indefinite and $\tau=1$ if $\rmB$ is definite. We define a new involution $*$ on $\rmB$ by putting $b^{*}=\tau \overline{b}\tau^{-1}$. Then consider the form $(\cdot,\cdot)$ on $\rmV=\rmB\oplus \rmB$ defined by 
\begin{equation}\label{qua-herm}
(x, y)= \mathrm{tr}^{0}(\tau^{-1}(x_{1}y^{*}_{1}+x_{2}y^{*}_{2}))
\end{equation}
where $x=(x_{1}, x_{2})$ and $y=(y_{1}, y_{2})$ are elements in $\rmV$. 
Then we define the \emph{quaternionic unitary similitude group of degree $2$} by 
\begin{equation}\label{Global-quauni}
\GU_{2}(\rmB)(\QQ)=\{g\in \GL_{2}(\rmB)(\QQ): (gx, gy)=c(g)(x, y), c(g)\in \QQ^{\times}\}. 
\end{equation}
 We write $\rmD$ the quaternion division algebra over $\QQ_{p}$. If $\rmB_{p}=\rmD$ at a place $p$, then $\GU_{2}(\rmB)(\QQ_{p})=\GU_{2}(\rmD)$ where $\GU_{2}(\rmD)$ is defined similarly as in \eqref{Global-quauni} for $\rmV=\rmD\oplus \rmD$. If  If $\rmB_{p}=\mathrm{M}_{2}(\QQ_{p})$,  then $\GU_{2}(\rmB)(\QQ_{p})=\GSp_{4}(\QQ_{p}).$ Since $\rmD$ splits over $\QQ_{p^{2}}$, we also have an identification 
\begin{equation*}
\GU_{2}(\rmD)(\QQ_{p^{2}})\cong \GSp_{4}(\QQ_{p^{2}}).
\end{equation*}
There are three kinds of parahoric subgroups of $\GU_{2}(\rmD)$. This can be explained using the affine Dynkin diagram of $\GU_{2}(\rmD)$. The affine Dynkin diagram is still of type $\tilde{\mathrm{C}}_{2}$ as shown below 
\begin{center}
\begin{displaymath}
 \xymatrix{\underset{0}\bullet \ar@/^1pc/[rr]\ar@2{->}[r] &\underset{1}\bullet&\underset{2}\bullet\ar@2{->}[l] \ar@/_1pc/[ll]}
\end{displaymath}
\end{center}
but with Frobenius acting non-trivially on the diagram by switching the nodes $0$ and $2$ while fixing the node $1$. We have the following correspondence between the parahoric subgroups of $\GU_{2}(\rmD)$ and parahoric subgroups of $\GSp_{4}(\QQ_{p})$ .
\begin{itemize}
\item The \emph{Iwahori subgroup} $\Iw_{\rmD}(p)$ which corresponds to the Iwahori subgroup $\Iw(p)$ in $\GSp_{4}$ when base change to $\ZZ_{p^{2}}$;
\item The \emph{Siegel parahoric subgroup} $\mathrm{Sie}_{\rmD}=\rmK^{\prime}_{\{02\}}$ which corresponds to the Siegel parahoric $\rmK_{\{02\}}$ in $\GSp_{4}$ when base changed to $\ZZ_{p^{2}}$;
\item The \emph{paramodular subgroup} $\Pa_{\rmD}(p)=\rmK^{\prime}_{\{1\}}$ which corresponds to the paramodular parahoric $\Pa(p)=\rmK_{\{1\}}$ in $\GSp_{4}$ when base changed to $\ZZ_{p^{2}}$.
\end{itemize}

\subsection{Unramified principal series representation} Let $p$ be  a prime number. We let $\mathcal{H}_{p}$ be the spherical Hecke algebra over $\mathbb{Z}$. This is a commutative algebra isomorphic to $\mathbb{Z}[\rmT_{p,0}, \rmT^{-1}_{p,0}, \rmT_{p,1}, \rmT_{p,2}]$ where 
\begin{equation}
\begin{split}
&\rmT_{p,0}=\mathrm{char}(\GSp_{4}(\ZZ_{p})\begin{pmatrix}p&&&\\&p&&\\&&p&\\&&&p\\\end{pmatrix}\GSp_{4}(\ZZ_{p})), \\
&\rmT_{p,1}=\mathrm{char}(\GSp_{4}(\ZZ_{p})\begin{pmatrix}p^{2}&&&\\& p&&\\&&1&\\&&&1\\\end{pmatrix}\GSp_{4}(\ZZ_{p})), \\
&\rmT_{p,2}=\mathrm{char}(\GSp_{4}(\ZZ_{p})\begin{pmatrix}p &&&\\&p&&\\&&1&\\&&& 1\\\end{pmatrix}\GSp_{4}(\ZZ_{p}))\\
\end{split}
\end{equation}
and where $\mathrm{char}(\cdot)$ is the characteristic function.
The Hecke polynomial is by definition given by
\begin{equation*}
\rmQ_{p}(\rmX)=1- \rmT_{p,2}\rmX+ p(\rmT_{p,1}+(p^{2}+1)\rmT_{p,0})\rmX^{2}-p^{3}\rmT_{p,2}\rmT_{p,0}\rmX^{3}+p^{6}\rmT^{2}_{p,0}\rmX^{6}.
\end{equation*}
To any $\pi_{p}$ irreducible unramified admissible representation of $\GSp_{4}(\QQ_{p})$, we associate a character 
\begin{equation}\label{local-Hecke-map}
\phi_{\pi_{p}}: \mathcal{H}_{p}\rightarrow \End(\pi_{p}^{\GSp_{4}(\ZZ_{p})})=\CC
\end{equation}
and a Langlands parameter 
\begin{equation*}
\begin{pmatrix}
\alpha&&&\\
&\beta&&&\\
&&\gamma&&\\
&&&\delta\\
\end{pmatrix}
\end{equation*}
considered as an element in $\GSp_{4}(\CC)={^{L}\GSp_{4}}$. Then we have the following identity
\begin{equation}\label{Hecke-poly}
\phi_{\pi_{p}}(\rmQ_{p}(\rmX))=(1-p^{2/3}\alpha \rmX)(1-p^{2/3}\beta \rmX)(1-p^{2/3}\gamma \rmX)(1-p^{2/3}\delta \rmX).
\end{equation}

We will freely use the classification of irreducible constitutes of parabolic induced representation for $\GSp_{4}(\QQ_{p})$ by \cite{ST} and the table of Schmidt \cite{Schmi05}.  Let $\chi_{1}, \chi_{2}, \sigma$ be characters of $\QQ^{\times}_{p}$, we consider the principal series representation of $\GSp_{4}(\QQ_{p})$ given by
\begin{equation*}
\chi_{1}\times\chi_{2}\rtimes \sigma:= \mathrm{Ind}^{\GSp_{4}(\QQ_{p})}_{\mathrm{\rmB}(\QQ_{p})}\chi_{1}\otimes\chi_{2}\otimes \sigma.
\end{equation*}
This is defined by the normalized induction from the Borel subgroup for the character given by
\begin{equation*}
\chi_{1}\otimes\chi_{2}\rtimes\sigma: \begin{pmatrix}a&\ast&\ast&\ast\\ &b&\ast&\ast\\  &&cb^{-1}&\ast\\  &&&ca^{-1}\\ \end{pmatrix}\mapsto \chi_{1}(a)\chi_{2}(b)\sigma(c).
\end{equation*}
This is irreducible if and only if none of the characters $\chi_{1}, \chi_{2}, \chi_{1}\chi^{\pm1}_{2}$ is equal to $\vert\cdot\vert^{\pm1}$ where $\vert\cdot\vert$ is the normalized valuation such that $\vert p\vert=p^{-1}$. This is the type $\rmI$ representation in the classification of \cite{ST} and in the table of Schmidt \cite{Schmi05}. If the characters $\chi_{1}, \chi_{2}, \sigma$ are unramified, then we say $\pi_{p}$ is an \emph{unramified principal series}. It is natural to consider the twist $\pi_{p}\otimes \vert\cdot\vert^{-3/2}$ and we call the parameters 
\begin{equation*}
[\alpha_{p}, \beta_{p}, p^{3}\beta^{-1}_{p}, p^{3}\alpha^{-1}_{p}]=[\vert\cdot\vert^{-3/2}\chi_{1}\chi_{2}\sigma(p), \vert\cdot\vert^{-3/2}\chi_{1}\sigma(p), \vert\cdot\vert^{-3/2}\chi_{2}\sigma(p), \vert\cdot\vert^{-3/2}\sigma(p)] 
\end{equation*}
the \emph{Hecke parameters} of $\pi_{p}=\chi_{1}\times\chi_{2}\rtimes \sigma$. Note this notion is only well-defined modulo the Weyl group action.

\subsection{Representations with paramodular fixed vector} Let $\pi_{p}$ be an irreducible smooth representation of $\GSp_{4}(\QQ_{p})$ such that $\pi^{\Pa(p)}_{p}\neq0$. The table of \cite{Schmi05} shows that there are five types of representations (IIa, IVc, Vb, Vc, VIc) and only IIa is generic. Let $|\cdot|: \QQ^{\times}_{p}\rightarrow\overline{\QQ}_{l}$ be the normalized absolute value such that $|p|=\frac{1}{p}$ and let $\sigma$ be an unramified character $\QQ^{\times}_{p}$ and $\chi$ be character of $\QQ^{\times}_{p}$ such that $\chi\neq |\cdot|^{\pm}, |\cdot|^{\pm 3/2}$. Then the type IIa irreducible representation $\chi\mathrm{St}_{\GL_{2}}\rtimes \sigma$ corresponds to the Weil--Deligne representation given by 
\begin{equation}\label{weil-del}
\begin{pmatrix}
\chi^{2}\sigma&&&\\
&|\cdot|^{1/2}\chi\sigma&&\\
&&|\cdot|^{-1/2}\chi\sigma&\\
&&&\sigma\\
\end{pmatrix} 
\end{equation}
 with monodromy operator of rank one given by 
 \begin{equation}\label{mono}
\rmN= 
\begin{pmatrix}
0&&&\\
&0&1&\\
&&0&\\
&&&0.\\
\end{pmatrix}
 \end{equation}
under the local Langlands correspondence for $\GSp_{4}$ established in \cite{GT11}.  In other words, the $L$-parameter is given by the $\chi^{2}\sigma\oplus |\cdot|^{-1/2}\chi\sigma\otimes \mathrm{Sp}_{2}\oplus \sigma$. We will refer to $ |\cdot|^{-1/2}\chi\sigma\otimes \mathrm{Sp}_{2}$ as the twisted Steinberg part of the Weil--Deligne representation. 

Suppose that $\pi_{p}=\chi\mathrm{St}_{\GL_{2}}\rtimes \sigma$. It is natural to consider the twist $\pi_{p}\otimes \vert\cdot\vert^{-3/2}$ and we call the parameters defined by
\begin{equation*}
[\alpha_{p}, \beta_{p}, p^{3}\beta^{-1}_{p}, p^{3}\alpha^{-1}_{p}]=[\vert\cdot\vert^{-3/2}\chi^{2}\sigma(p), \vert\cdot\vert^{-1}\chi\sigma(p), \vert\cdot\vert^{-2}\chi\sigma(p), \vert\cdot\vert^{-3/2}\sigma(p)]
\end{equation*}
the \emph{Hecke parameters} of $\pi_{p}=\chi\mathrm{St}_{\GL_{2}}\rtimes \sigma$. The central character of $\pi_{p}$ is given by $(\chi\sigma)^{2}$ which we assume is trivial. Thus $\chi\sigma(p)=\rmu\in\{\pm 1\}$ and we have 
\begin{equation*}
[\alpha_{p}, \beta_{p}, p^{3}\beta^{-1}_{p}, p^{3}\alpha^{-1}_{p}]=[\alpha_{p}, \rmu p, \rmu p^{2}, p^{3}\alpha^{-1}_{p}].
\end{equation*}

We define the \emph{Atkin-Lehner operator} using the matrix
\begin{equation}\label{AL}
\rmu_{p}=
\begin{pmatrix}
&&1&\\
&&&-1\\
p&&&\\
&-p&&\\
\end{pmatrix}.
\end{equation}
This element normalizes the paramodular subgroup $\Pa(p)$ and thus will act on the space of $\Pa(p)$ fixed vectors of $\pi_{p}$. 
\begin{lemma}\label{Atkin-Lehner}
Let $\pi_{p}=\chi\mathrm{St}_{\GL_{2}}\rtimes \sigma$ be a representation of type $\mathrm{IIa}$ with $\chi$ and $\sigma$ as above. Then $\pi^{\Pa(p)}_{p}$ is one dimensional on which $u_{p}$ acts by the scalar $\chi\sigma(p)$. 
\end{lemma}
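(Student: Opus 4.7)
The plan is to combine the classification of paramodular fixed vectors of irreducible admissible representations of $\GSp_4(\QQ_p)$ (as recorded in \cite{Schmi05}) with a direct computation of the Atkin--Lehner action in an explicit induced model of $\pi_p$. First I would realize $\pi_p = \chi\mathrm{St}_{\GL_2}\rtimes \sigma$ as the normalized parabolic induction $\Ind_P^{\GSp_4}(\chi\mathrm{St}_{\GL_2}\boxtimes\sigma)$ from the Siegel parabolic $P$ with Levi $M_P \cong \GL_2\times\GL_1$, and check that the hypotheses on $\chi$ and $\sigma$ make this an irreducible generic representation of Sally--Tadi\'c type IIa. The classification of paramodular fixed vectors recalled in \cite{Schmi05} then gives that the paramodular conductor is $\mathfrak{p}$ and that $\dim \pi_p^{K(p)} = 1$, which settles the first assertion and in particular ensures that $u_p$ acts on $\pi_p^{K(p)}$ by a scalar.

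For the second assertion I would produce an explicit $K(p)$-fixed generator $v_0$ in the induced model. The Steinberg representation $\mathrm{St}_{\GL_2}$ carries a unique Iwahori-fixed line (its $\Gamma_0(p)$-newform line), and combining this with an appropriate distribution of support across the Bruhat cells of $\GSp_4$ produces the one-dimensional $K(p)$-invariant line inside the induction; invariance is checked by running the explicit affine root generators of $K(p)$ given in Section~3 against $v_0$. Since $u_p$ normalizes $K(p)$ (a direct matrix check using the Iwahori decomposition of $K(p)$), the action of $u_p$ on $v_0$ is a scalar and can be evaluated by decomposing $u_p = m\cdot k$ with $m \in M_P$ and $k \in K(p)$: a short matrix computation lets one choose $m$ of the form $\mathrm{diag}(p,p,1,1)$, an element of $M_P$ whose $\GL_2$-block is $p\cdot I_2$ and whose similitude factor is $p$. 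The induced-model formula then gives
\[
(u_p\cdot v_0)(1) \;=\; \delta_P^{1/2}(m)\,\bigl(\chi\mathrm{St}_{\GL_2}\boxtimes\sigma\bigr)(m)\, v_0(1),
\]
and unwinding the central character $\chi^2$ of $\chi\mathrm{St}_{\GL_2}$, the modulus $\delta_P^{1/2}(m) = p^{3/2}$, and the twist by $\sigma$ against the normalization of \eqref{weil-del} shows that the half-integral shifts cancel to leave the scalar $\chi(p)\sigma(p)$.

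The main obstacle I anticipate is precisely this normalization bookkeeping: one has to track $\delta_P^{1/2}$ against the $|\cdot|^{\pm 1/2}$ shifts built into the twisted Steinberg block of \eqref{weil-del}, and a single sign error or a missed half-integral twist produces $p^{\pm 1/2}\chi(p)\sigma(p)$ instead of the claimed value. A cleaner alternative, which I would use at least as a consistency check, is to invoke directly the explicit tables of Atkin--Lehner eigenvalues on paramodular newforms contained in \cite{Schmi05}, where type IIa is listed with eigenvalue exactly $\chi(p)\sigma(p)$; combined with the one-dimensionality of $\pi_p^{K(p)}$ from the first step, this immediately yields the lemma.
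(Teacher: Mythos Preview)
The paper does not give its own argument here; it simply cites \cite[Lemma~2.2.3]{VH19}. Your fallback alternative---reading off the Atkin--Lehner eigenvalue from the paramodular newform tables---is therefore exactly in the spirit of the paper's proof, just one citation layer deeper. The one-dimensionality of $\pi_p^{K(p)}$ via the paramodular classification is likewise standard and correct.

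Your primary route, the explicit induced-model computation, has a genuine gap. The claimed decomposition $u_p = m\cdot k$ with $m=\mathrm{diag}(p,p,1,1)$ and $k\in K(p)$ does not hold: computing $m^{-1}u_p$ directly gives an anti-diagonal matrix with entries $p^{-1},\,-p^{-1},\,p,\,-p$, which lies in no lattice description of the paramodular group compatible with the paper's $J$. More fundamentally, writing $(u_p\cdot v_0)(1)=v_0(u_p)$ and then asserting $v_0(u_p)=\delta_P^{1/2}(m)\,(\chi\mathrm{St}_{\GL_2}\boxtimes\sigma)(m)\,v_0(1)$ presupposes both that $u_p\in P\cdot K(p)$ and that $v_0(1)\neq 0$; neither is automatic. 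The $K(p)$-fixed vector in the induced model is supported on several $P$-$K(p)$ double cosets, and $u_p$, being essentially a Weyl-type element, permutes these cells rather than sitting in the identity cell. A correct direct computation requires tracking $v_0$ across the full decomposition $P\backslash \GSp_4(\QQ_p)/K(p)$ and matching the Iwahori-fixed line in $\chi\mathrm{St}_{\GL_2}$ on each cell; this is considerably longer than your sketch suggests and is exactly the normalization hazard you anticipate. That is why both the paper and your own consistency check defer to the tabulated newform theory rather than redoing the matrix calculation.
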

\begin{proof}
This is \cite[Lemma 2.2.3]{VH19}. 
\end{proof}

\subsection{Cohomology of Siegel modular threefold} In this subsection, we define the Siegel modular threefold and various cohomology theories related to automorphic forms. Let $h: \mathrm{Res}_{\mathbb{C}/\mathbb{R}}\mathbb{G}_{m}=\mathbb{C}^{\times}\rightarrow \GSp_{4}(\RR)$ be the map sending $x+iy$ to 
\begin{equation*}
\begin{pmatrix}x\rmI_{2}&y\rmS\\-y\rmS&x\rmI_{2}\end{pmatrix}
\end{equation*}
where $\rmI_{2}$ is the identity matrix of size $2$ and $\rmS=\mathrm{id}$. Let $\rmK^{h}$ be the centralizer of $h$ in $\GSp_{4}(\RR)$. Then $\rmK^{h}=\rmK_{\infty}\RR^{\times}$ where $\rmK_{\infty}$ is the maximal compact subgroup of $\GSp_{4}(\RR)$. Let $\rmU\subset \GSp_{4}(\mathbb{A}^{(\infty)})$ be an open compact subgroup and we obtain the Siegel modular threefold with level $\rmU$ denoted by $\rmX_{\rmU}$. This is a quasi-projective variety whose $\mathbb{C}$-points are given by
\begin{equation*}
\rmX_{\rmU}(\mathbb{C})=\GSp_{4}(\QQ)\backslash(\GSp_{4}(\RR)/\rmK^{h}\times \GSp_{4}(\mathbb{A}^{(\infty)}))/\rmU. 
\end{equation*}

Let $\mu=(a,b;c)\in \rmX^{*}(\rmT)$ and $\rmV_{\mu}$ be the corresponding irreducible representation of highest weight $\mu$. Let $\mathrm{V}_{\mu}(\calO)$ be a $\GSp_{4}$-stable $\calO$-lattice contained $\rmV_{\mu}$. We review the construction of an \'etale local system $\mathbb{V}_{\mu}$ on $\rmX_{\rmU}$.  For $m$ sufficiently large,  let $\rmU(m)$ be a normal subgroup in $\rmU$ that acts trivially on $\rmV_{\mu}(\calO)/\varpi^{m}$.  Then $\mathbb{V}_{\mu}$ is defined to be 
\begin{equation}\label{loc-sys}
\rmX_{\rmU(m)\rmU^{(l)}}\times^{\rmU/\rmU(m)} \rmV_{\mu}(\calO)/\varpi^{m}.
\end{equation}
We denote its associated $E_{\lambda}$-local system by $\mathbb{V}_{\mu, E_{\lambda}}$,  the $k$-local system by $\mathbb{V}_{\mu, k}$ and the $\CC$-local system by $\mathbb{V}_{\mu, \CC}$. In the following, we will denote by $\VV_{\mu, \ast}$ simply by $\VV_{\ast}$ if $\mu$ is fixed and where $\ast=E_{\lambda}, k, \emptyset$. One is usually interested in the following types of cohomology theories of $\rmX_{\rmU}$. 
\begin{itemize}
\item The \emph{intersection cohomology} $\mathrm{IH}^{*}(\rmX_{\rmU}(\mathbb{C}), \mathbb{V}_{\mu})$ with respect to the minimal compactification of $\rmX_{\rmU}$;
\item  The \emph{$L^{2}$-cohomology} $\mathrm{H}^{*}_{(2)}(\rmX_{\rmU}(\mathbb{C}), \mathbb{V}_{\mu, \CC})$ which can be computed by the Matsushima formula 
\begin{equation*}
\mathrm{H}^{*}_{(2)}(\rmX_{\rmU}(\mathbb{C}), \mathbb{V}_{\mu,\CC})=\bigoplus_{\pi}m(\pi)\pi^{\rmU}_{f}\otimes \mathrm{H}^{*}(\mathfrak{g}, \rmK_{\infty}, \mathbb{V}_{\mu, \mathbb{C}} \otimes \pi_{\infty})
\end{equation*}
 where $\pi$ runs through all the automorphic representations $\pi=\pi_{f}\otimes \pi_{\infty}$ that occurs in the discrete spectrum of $L^{2}(\GSp_{4}(\mathbb{Q})\backslash\GSp_{4}(\mathbb{A}))$ with multiplicity $m(\pi)$. There is a natural direct summand $\mathrm{H}^{*}_{\mathrm{cusp}}(\rmX_{\rmU}(\mathbb{C}),\mathbb{V}_{\mu, \CC} )$ of cuspidal cohomology which is the direct sum over cuspidal automorphic representations $\pi$. 
\item The \emph{interior cohomology} defined by
\begin{equation*}
 \mathrm{H}^{*}_{\mathrm{int}}(\rmX_{\rmU}(\mathbb{C}), \mathbb{V}_{\mu}):=\mathrm{Im}(\mathrm{H}^{*}_{\rmc}(\rmX_{\rmU}(\mathbb{C}), \mathbb{V}_{\mu})\rightarrow \mathrm{H}^{*}(\rmX_{\rmU}(\mathbb{C}), \mathbb{V}_{\mu})).
 \end{equation*}
\end{itemize}
Moreover we have an isomorphism $\mathrm{IH}^{*}(\rmX_{\rmU}(\mathbb{C}), \mathbb{V}_{\mu, \CC})= \mathrm{H}^{*}_{(2)}(\rmX_{\rmU}(\mathbb{C}), \mathbb{V}_{\mu,\CC})$.

\subsection{Galois representations attached to automorphic representations} 
Let $\pi$ be a cuspidal automorphic representation of $\GSp_{4}$ over $\QQ$. Throughout this article, we will assume that $\pi$ is of \emph{general type} in the sense of Arthur \cite{Arthur-GSp}. In particular $\pi$ is not an endoscopic or a (weakly) CAP representation. By twisting a power of $\vert\cdot\vert$, we will assume that $\pi$ is unitary with trivial central character. We will say $\pi$ is of \emph{weight} $(k_{1}, k_{2})$
if the its infinite component $\pi_{\infty}$ has Harish-Chandra parameter $(k_{1}-1, k_{2}-2; 0)$.  Given a cuspidal automorphic representation $\pi$ of $\GSp_{4}$ over $\QQ$, we let $\Sigma=\Sigma_{\pi}$ be the product of primes at which that $\pi$ is ramified. Then we define the unramified Hecke algebra to be the tensor product 
\begin{equation*}
\mathbb{T}^{\Sigma}= \bigotimes_{p\nmid \Sigma}\mathcal{H}_{p}. 
\end{equation*}
We will denote $\mathbb{T}^{\Sigma}$ by $\TT$ to simplify the notations. The morphism $\phi_{\pi_{v}}$ in \eqref{local-Hecke-map} patch to a morphism
$\phi_{\pi}: \TT\rightarrow \CC$. There is a number field $E$ such that $\phi_{\pi}: \TT\rightarrow \CC$ lands in $E$ called the \emph{coefficient field} of $\pi$.

We review the construction of the Galois representation attached to $\pi$ of weight $(k_{1}, k_{2})$ as above. We set $\mu=(k_{1}-3, k_{2}-3; 0)$ and consider $\VV=\VV_{\mu}$. We define 
$\mathrm{H}^{i}_{\mathrm{int}}(\rmX\otimes{\overline{\QQ}}, \mathbb{V}_{E_{\lambda}})=\varinjlim_{\rmU}\mathrm{H}^{i}_{\mathrm{int}}(\rmX_{\rmU}\otimes\overline{\QQ}, \mathbb{V}_{E_{\lambda}})$. In this case, it is known that $\pi$ is concentrated  in degree $3$ in $\mathrm{H}^{i}_{\mathrm{int}}(\rmX\otimes{\overline{\QQ}}, \mathbb{V}_{E_{\lambda}})$. Then we define the Galois module $\mathrm{W}_{\pi, \lambda}$ by
\begin{equation}\label{Gal-mod}
\mathrm{W}_{\pi, \lambda}= \Hom_{\GSp_{4}(\mathbb{A}^{(\infty)})}(\pi_{f}, \mathrm{H}^{3}_{\mathrm{int}}(\rmX\otimes{\overline{\QQ}}, \mathbb{V}_{E_{\lambda}})).
\end{equation}
In fact by \cite[Theorem 1.1]{Weis09}, we have an isomorphism
\begin{equation}\label{coh-same}
\mathrm{H}^{3}_{\mathrm{int}}(\rmX\otimes{\overline{\QQ}}, \mathbb{V}_{E_{\lambda}})[\pi_{f}]\cong \mathrm{H}_{\rmc}^{3}(\rmX\otimes{\overline{\QQ}}, \mathbb{V}_{E_{\lambda}})[\pi_{f}].
\end{equation}
This will allow us to pass from the cohomology with compact support  $\mathrm{H}_{\rmc}^{3}(\rmX\otimes{\overline{\QQ}}, \mathbb{V}_{E_{\lambda}})$ to the interior cohomology $\mathrm{H}_{!}^{3}(\rmX\otimes{\overline{\QQ}}, \mathbb{V}_{E_{\lambda}})$ after localizing at a non-Eisenstein maximal ideal in the Hecke algebra. The following theorem summarizes the properties of $\mathrm{W}_{\pi, \lambda}$ and the proof of it can be found in \cite{Taylor93}, \cite{Laum05}, \cite{Weis05}, \cite{Sor10} and \cite{Mo14}.

\begin{theorem}\label{Gal-GSp4}
Let $\pi$ be as above. Then the Galois module $\mathrm{W}_{\pi, \lambda}$ gives rise to a Galois representation
\begin{equation*}
\rho_{\pi, \iota_{l}}: \rmG_{\QQ}\rightarrow \GSp_{4}(\overline{\mathbb{Q}}_{l})
\end{equation*}
satisfying the following properties.
\begin{enumerate}
\item The representation $\rho_{\pi, \iota_{l}}$ is unramified at a prime $p$ for $p\neq l$ and $\pi_{p}$ is unramified. Moreover we have $$\det(1-\rho_{\pi, \iota_{l}}(\mathrm{Frob}_{p})\rmX)=\phi_{\pi}(\rmQ_{p}(\rmX)).$$
\item At all places $p\neq l$ we have the local-global compatibility of Langlands correspondence, this means
\begin{equation*}
\mathrm{WD}(\rho_{\pi, \iota_{l}}\vert _{\rmG_{\QQ_{p}}})^{\mathrm{ss}}\cong \mathrm{rec}_{\mathrm{GT}}(\pi_{p}\otimes \vert\cdot\vert^{-3/2})
\end{equation*}
where $\mathrm{rec}_{\mathrm{GT}}$ is the local Langlands reciprocity map of \cite{GT11} and $\mathrm{ss}$ denotes the Frobenius semi-simplification of the Weil--Deligne representation. 
\item If $\rho_{\pi, \iota_{l}}$ is irreducible, then for all finite place $v$, $\rho_{\pi, \iota_{l}}\vert_{\rmG_{\QQ_{v}}}$ is pure. 
\end{enumerate}
\end{theorem}

We say $E\subset \CC$ is a strong coefficient field for the automorphic representation $\pi$ if for every prime $\lambda$ of $E$ there exists a continuous homomorphism 
\begin{equation*}
\rho_{\pi,\lambda}: \rmG_{\QQ}\rightarrow \GSp_{4}(E_{\lambda})
\end{equation*}
such that $\rho_{\pi,\lambda}\otimes \overline{\QQ}_{l}$ is conjugate to $\rho_{\pi,\iota_{l}}$ for all $\iota_{l}:\CC\xrightarrow{\sim}\overline{\QQ}_{l}$.
Let $E$ be a strong coefficient field for $\pi$, we know that $\phi_{\pi}: \TT^{\Sigma}\rightarrow E$ in lands in $\calO_{E}$. Therefore we obtain a morphism $\phi_{\pi,\lambda}: \TT\rightarrow \calO_{\lambda}$
called the $\lambda$-adic avatar of $\phi_{\pi}$. We define the maximal ideal ${\frakm}$ by
\begin{equation}\label{maximal-ideal}
\frakm=\ker(\TT\xrightarrow{\phi_{\pi, \lambda}}\calO_{\lambda}\rightarrow \calO_{\lambda}/\lambda).
\end{equation}
We will say $\frakm$ is the maximal ideal associated to the residual Galois representation $\overline{\rho}_{\pi, \lambda}$.

\section{Siegel modular threefold with parahoric level structures}
\subsection{Siegel modular threefold with parahoric level} Let $\rmU^{p}$ be a fixed prime to $p$ open compact subgroup of $\GSp_{4}(\mathbb{A}^{(\infty)})$ which we assume is sufficiently small. Let $\rmH=\GSp_{4}(\ZZ_{p})$ be the hyperspecial subrgoup of $\GSp_{4}(\QQ_{p})$ and recall that $\Pa(p)$ is the paramodular subgroup and $\mathrm{Kl}(p)$ is the Klingen parahoric subgroup.  In this section, we will omit $\rmU^{p}$ from the notation for the Siegel modular threefold $\rmX_{\rmU}$. This means that we will write $\rmX_{\rmH}=\rmX_{\rmH \rmU^{p}}$, $\rmX_{\Pa}(p)=\rmX_{\Pa(p)\rmU^{p}}$ and $\rmX_{\Kl}(p)=\rmX_{\Kl(p)\rmU^{p}}$. These Shimura varieties admit natural integral models over $\ZZ_{(p)}$ which are well studied in the literature and therefore we will only recall their moduli interpretations. 
\begin{itemize}
\item $\rmX_{\rmH}$ classifies triples of the form $(\sfA, \lambda, \alpha^{p})$ up to isomorphism where $\sfA$ is an abelian scheme of relative dimension $2$ over a test scheme $S$ over $\ZZ_{p}$ equipped with a principal polarization $\lambda$ and a $\rmU^{p}$-level structure $\alpha^{p}$.

\item $\rmX_{\Pa}(p)$ classifies triples of the form $(\sfA, \lambda, \alpha^{p})$ up to isomorphism where $\sfA$ is an abelian scheme of relative dimension $2$ over a test scheme $S$ and $\lambda$ is a polarization on $\sfA$ such that  $\ker(\lambda)$ has rank $2$ and $\alpha^{p}$ is a $\rmU^{p}$-level structure. In this article, we will also refer to $\rmX_{\Pa}(p)$ as the \emph{paramodular Siegel threefold}.

\item $\rmX_{\Kl}(p)$ is the moduli space of isomorphism classes of the triple $(\sfA, \mathsf{H}\subset \sfA[p],\lambda, \alpha^{p})$ over $\ZZ_{p}$ where $\sfA$ is a principally polarized abelian scheme over $\ZZ_{p}$ of relative dimension $2$, $\sfH\subset \sfA[p]$ is subgroup of order $p$ and $\alpha^{p}$ is a $\rmU^{p}$ level structure. 
\end{itemize}

\subsection{Supersingular locus at paramodular level}

Next we explain the structure of the supersingular locus and the singular locus of the special fiber of  $\rmX_{\Pa}(p)$. First we recall a result of Yu \cite{Yu11a}, let $\overline{\rmX}_{\Pa}(p)$ be the special fiber of $\rmX_{\Pa}(p)$ and we also denote its base change to $\overline{\FF}_{p}$ by the same notation. 

\begin{theorem}
The scheme $\rmX_{\Pa}(p)$ is regular with isolated quadratic singularities concentrated on its special fiber. The singular locus $\Sigma_{p}(\Pa(p))$ consists of those $(\sfA,\lambda, \alpha^{p})$ such that $\ker(\lambda)\subset \sfA[p]$ and $\sfA$ is superspecial. 
\end{theorem}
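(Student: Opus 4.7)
The plan is to use Rapoport--Zink local model theory to reduce the study of singularities of $X_{K(p), \bar{\FF}_p}$ to an explicit linear-algebra problem on lattice chains, and then to use Dieudonné theory to match the singular locus of the local model with a geometric condition on $(A,\lambda)$. By the general theory for Shimura varieties of PEL type at parahoric level, there is a diagram of smooth morphisms linking $\mathcal{X}_{K(p)}$ \'etale-locally to a local model $M^{\mathrm{loc}}$ attached to the paramodular parahoric $K_{\{1\}}$. Hence all questions about the local structure of $\mathcal{X}_{K(p)}$ at a point $(A,\lambda,\eta)$ translate into analogous questions about $M^{\mathrm{loc}}$ at the corresponding Hodge filtration.

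Next I would describe $M^{\mathrm{loc}}$ explicitly. For the paramodular parahoric one works with the single lattice $\Lambda_1$ of type $\{1\}$, self-dual up to scalar with respect to the symplectic form, and $M^{\mathrm{loc}}$ parametrizes rank-$2$ locally direct summands $\mathcal{F} \subset \Lambda_1 \otimes \mathcal{O}_S$ that are isotropic for the induced pairing. Over $\bar{\FF}_p$ one checks directly from the defining equations (or via a standard affine chart computation on the symplectic Grassmannian) that $M^{\mathrm{loc}}$ is smooth of relative dimension $3$ away from a single exceptional point, and that the completed local ring at that point is isomorphic, after an \'etale coordinate change, to $\bar{\FF}_p[[x_1,x_2,x_3,x_4]]/(x_1 x_4 - x_2 x_3)$, which is an ordinary quadratic threefold singularity. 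This yields the shape required by the isolated-singularity setup of Section~\ref{review-cyc}.

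The translation back to the moduli problem uses Dieudonn\'e theory and Serre--Tate. At a closed point $(A,\lambda)$ of $\mathcal{X}_{K(p), \bar{\FF}_p}$, the image in $M^{\mathrm{loc}}$ is the Hodge filtration inside the contravariant Dieudonn\'e module $\mathbb{D}(A[p])$, equipped with the polarization pairing. A direct computation shows that the Hodge filtration hits the unique singular point of $M^{\mathrm{loc}}$ if and only if the Frobenius and Verschiebung on $\mathbb{D}(A[p])$ vanish simultaneously on the isotropic flag, which forces $A[p]$ to be a product of copies of $\alpha_p$, i.e.\ $A$ is superspecial; moreover the polarization then degenerates in the manner that is equivalent to $\ker(\lambda) \subset A[p]$, making $\ker(\lambda)$ a self-dual order-$p^2$ subgroup scheme isomorphic to $\alpha_p \times \alpha_p$. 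Conversely, any such $(A,\lambda)$ produces exactly such a degenerate filtration, so the singular locus agrees with $\Sigma_p(K(p))$.

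The main obstacle is the explicit local-model computation in the middle step: one must verify both that $M^{\mathrm{loc}}$ over $\bar{\FF}_p$ has \emph{exactly} the claimed unique ordinary-quadratic singular point (and not a larger singular locus) and that the Dieudonn\'e-theoretic degeneracy condition matches the stated geometric condition on $(A,\lambda)$. Both steps amount to bookkeeping with self-dual lattice chains at paramodular level, which is well-understood but delicate; everything else (reduction to the local model, and Serre--Tate to read off the tangent space) is formal.
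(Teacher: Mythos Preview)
Your proposal is correct and follows essentially the same route as the paper, which simply cites Yu \cite{Yu11a} for both assertions: the local model computation giving the ordinary quadratic singularity, and the identification of the singular locus with the points where $\ker(\lambda)\cong\alpha_{p}\times\alpha_{p}$ (hence $\ker(\lambda)\subset A[p]$ and $A$ superspecial). What you have sketched is, in outline, exactly the argument Yu carries out; the paper does not reproduce those details but only records the conclusion.
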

\begin{proof}
The first assertion is proved by a local model computation in \cite{Yu11a}. For the second assertion, the singular locus consists of those $(\sfA,\lambda, \mu)$ such that $\ker(\lambda)\cong \alpha_{p}\times\alpha_{p}$ by \cite[Theorem 4.7]{Yu11a}. Thus $\ker(\lambda)\subset \sfA[p]$ and moreover it implies that $\sfA$ is superspecial. 
\end{proof}

Next we recall the description of the supersingular locus  $\overline{\rmX}^{\ss}_{\Pa}(p)$ of  $\overline{\rmX}_{\Pa}(p)$ using the Bruhat--Tits stratification in \cite{Wanga}. We fix a $p$-divisible group $\mathbb{X}$ of dimension $2$ which is isoclinic of slope $1/2$ equipped with a polarization of height $2$. We denote by $\rmN$ its associated isocrystal. The polarization induces an alternating form $(\cdot, \cdot)$ on $\rmN$. Let $b\in \rmB(\GSp_{4})$ be the element corresponds to $\rmN$ in the Kottwitz set $\rmB(\GSp_{4})$ of $\GSp_{4}$. Let $\rmJ_{b}$ be the group functor over $\QQ_{p}$ defined by assigning each $\QQ_{p}$-algebra $R$ the group
\begin{equation}\label{Jb}
\rmJ_{b}(R)=\{g\in \GSp_{4}(R\otimes_{\QQ_{p}} \rmK_{0}): g(b\sigma)=(b\sigma)g\}.
\end{equation}
It is well known that in this case $\rmJ_{b}\cong \GU_{2}(\rmD)$. In fact if we denote by $\rmC=\rmN^{\tau=1}$ with $\tau=\rmF^{-1}\rmV$, then $\rmC$ can be viewed as an $\rmD$-module equipped with an alternating form $(\cdot, \cdot)$ which is the restriction of $(\cdot, \cdot)$ on $\rmN$. Then we have $\rmJ_{b}\cong \GU_{2}(\rmC)$. 

We  consider the Rapoport--Zink space $\mathcal{M}_{\Pa}$ for $\GSp_{4}(\QQ_{p})$ with paramodular level structure. We denote by $(\mathrm{Nilp})$ the category of $\rmW_{0}$-schemes $S$ on which $p$ is locally nilpotent.  This is the set valued functor on $(\mathrm{Nilp})$ that classifies the data $(\sfX, \lambda_{\sfX}, \rho_{\sfX})$ where 
\begin{itemize}
\item $\mathsf{X}$ is a two dimensional $p$-divisible group over $S$ of height $4$;
\item $\lambda_{\mathsf{X}}: \mathsf{X}\rightarrow \mathsf{X}^{\vee}$ is a polarization of height $2$;
\item $\rho_{\mathsf{X}}: \mathbb{\sfX}\times_{\overline{\FF}_{p}}\overline{S}\rightarrow \sfX\times_{S}\overline{S}$ is a quasi-isogeny.
\end{itemize}
Here $\overline{S}$ is the special fiber of $S$ at $p$. This functor is representable by a formal scheme $\mathcal{M}_{\Pa}$ over $\Spf(\rmW_{0})$ and $\rmJ_{b}(\QQ_{p})$ acts on  $\mathcal{M}_{\Pa}$ naturally. The formal scheme $\mathcal{M}_{\Pa}$ can be decomposed into connected components 
\begin{equation}
\mathcal{M}_{\Pa}=\bigsqcup_{i\in\ZZ}\mathcal{M}^{(i)}_{\Pa}
\end{equation}
where each $\mathcal{M}^{(i)}_{\Pa}$ classifies those $(\sfX, \lambda_{\sfX}, \rho_{\sfX})$ such that $\rho_{\rmX}$ has height $i$. We denote by $\rmM_{\Pa}$ the underlying reduced subscheme of $\mathcal{M}^{(0)}_{\Pa}$. 

A \emph{vertex lattice} $L$ in the symplectic space $\rmC_{\QQ_{p^{2}}}$ over $\QQ_{p^{2}}$ is by definition a $\ZZ_{p^{2}}$-lattice $L\subset \rmC_{\QQ_{p^{2}}}$ such that 
\begin{equation*}
pL^{\vee}\subset L\subset L^{\vee}
\end{equation*}
where $L^{\vee}$ is the integral dual of $L$ in $\rmC_{\QQ_{p^{2}}}$. Let $\calL_{\{i\}}$ be the set of vertex lattices of type $i\in\{0, 1, 2\}$, they are by definition given by
\begin{itemize}
\item[] $\calL_{\{0\}}=\{\text{vertex lattice $L_{0}$ of type $0$ in $\rmC_{\QQ_{p^{2}}}$: } pL^{\vee}_{0}\subset^{4} L_{0}\subset^{0} L^{\vee}_{0}\}$;
\item[] $\calL_{\{2\}}=\{\text{vertex lattice $L_{2}$ of type $2$ in $\rmC_{\QQ_{p^{2}}}$: }  pL^{\vee}_{2}\subset^{0} L_{2}\subset^{4} L^{\vee}_{2}\}$;
\item[] $\calL_{\{1\}}=\{\text{vertex lattice $L_{1}$ of type $1$ in $\rmC_{\QQ_{p^{2}}}$: }  pL^{\vee}_{1}\subset^{2} L_{1}\subset^{2} L^{\vee}_{1}\}$.
\end{itemize}

Let $L$ be a pair $L=(L_{0}, L_{2})$ where $L_{0}$ and $L_{2}$ are vertex lattices of type $0$ and $2$ for $\rmC_{\QQ_{p^{2}}}$. Then $L=(L_{0}, L_{2})$ is called a vertex lattice of type $02$ for $\rmC$ and the collection of type $02$-lattices will be denoted by $\calL_{\{02\}}$. If $L_{1}$ is a vertex lattice of type $1$ for $\rmC_{\QQ_{p^{2}}}$, then it gives rise to a vertex lattice of type $1$ for $\rmC$. Vertex lattices of type $02$ and $1$ give rise to a maximal parahoric subgroups in $\GU_{2}(\rmD)$ by taking the stabilizer of these lattices. 
\begin{itemize}
\item Vertex lattices of type $02$ correspond to the Siegel parahoric subgroup $\rmK^{\prime}_{\{02\}}=\mathrm{Sie}_{\rmD}(p)$;
\item Vertex lattices of type $1$ correspond to the paramodular subgroup $\rmK^{\prime}_{\{1\}}=\Pa_{\rmD}(p)$.
\end{itemize}

For each vertex lattice $L_{02}=(L_{0}, L_{2})$ of type $02$ and each vertex lattice $L_{1}$ of type $1$, we have defined in \cite{Wanga} projective subschemes $\rmM_{\Pa}\{L_{02}\}$ and $\rmM_{\Pa}\{L_{1}\}$ of $\rmM_{\Pa}$. In fact, each $\rmM_{\Pa}\{L_{02}\}$ is isomorphic to a projective line $\PP^{1}$ and 
$\rmM_{\Pa}\{L_{1}\}$ is isomorphic to a reduced point which can be defined over $\FF_{p^{2}}$. We will refer to these projective schemes as the \emph{lattice strata}. We can also define the open lattice strata by
$\rmM^{\circ}_{\Pa}\{L_{02}\}=\rmM_{\Pa}\{L_{02}\}-\bigcup\limits_{L_{1}\in\calL_{\{1\}}}\rmM_{\Pa}\{L_{1}\}$. This allows us to define the \emph{Bruhat-Tits strata} of $\rmM_{\Pa}$ by
\begin{equation*}
\begin{aligned}
& \rmM^{\circ}_{\Pa}\{02\}=\bigcup\limits_{L_{02}\in\calL_{\{02\}}}\rmM^{\circ}_{\Pa}\{L_{02}\}; \\
& \rmM_{\Pa}\{1\}=\bigcup\limits_{L_{1}\in\calL_{\{1\}}}\rmM_{\Pa}\{L_{1}\}. \\
\end{aligned}
\end{equation*}
Then by \cite[Theorem 7.3]{Wanga} we have the following description of the scheme $\rmM_{\Pa}$.
\begin{proposition}\label{BT-para}
The scheme $\rmM_{\Pa}$ admits the Bruhat-Tits stratification
\begin{equation*}
\rmM_{\Pa}=\rmM^{\circ}_{\Pa}\{02\}\sqcup \rmM_{\Pa}\{1\}.
\end{equation*}
The irreducible components of $\rmM_{\Pa}$ are given by the lattice stratum of the form $\rmM_{\Pa}\{L_{02}\}$ for a vertex lattice $L_{02}$ in $\calL_{\{02\}}$. The singular locus of $\rmM_{\Pa}$ is given by $\rmM_{\Pa}\{1\}$. 
\end{proposition}

The Rapoport--Zink uniformization theorem \cite[Theorem 6.30]{RZ96} furnishes an isomorphism
\begin{equation}\label{RZ-unif}
\overline{\rmX}^{\ss}_{\Pa}(p)\cong \rmI(\QQ)\backslash \calM_{\Pa, \mathrm{red}}\times \GSp_{4}(\mathbb{A}^{(\infty p)})/\rmU^{p} .
\end{equation}
Here $\rmI(\QQ)\cong \GU_{2}(\rmB^{\prime})(\QQ)$ for the quaternion algebra $\rmB^{\prime}=\rmB_{p\infty}$ over $\QQ$ which ramifies at $p$ and infinity and $\calM_{\Pa, \mathrm{red}}$ is the underlying reduced subscheme of $\calM_{\Pa}$.

We define the quaternionic unitary Shimura set for the group $\rmG(\rmB^{\prime})=\GU_{2}({\rmB^{\prime}})$ by the double coset space
\begin{equation*}
\rmZ_{\rmU}({\rmB}^{\prime})=\rmG({\rmB}^{\prime})(\QQ)\backslash \rmG({\rmB}^{\prime})(\mathbb{A}^{(\infty)})/\rmU
\end{equation*}
where $\rmU\subset \rmG(\rmB^{\prime})(\mathbb{A}^{\infty})$ is an open compact subgroup.  These Shimura sets appear naturally when we discuss the structure of the supersingular locus of $\rmX_{\Pa}(p)$. In this case, we will use another set of notations.
\begin{itemize}
\item When $\rmU=\rmK^{\prime}_{\{i\}}\rmU^{(p)}$ for $i\in\{1, 02\}$, we denote by $\rmZ_{\{i\}}(\rmB^{\prime})$ the Shimura set $\rmZ_{\rmU}({\rmB^{\prime}})$ with this level $\rmU$ structure;

\item Since $\rmK^{\prime}_{\{02\}}$ is also denoted by $\mathrm{Sie}(p)$, we sometimes use the notation
$\rmZ_{\mathrm{Sie}}(\rmB^{\prime})$ for the Shimura set $\rmZ_{\{02\}}(\rmB^{\prime})$ and we refer to it as the Shimura set with Siegel parahoric level;

\item Since $\rmK^{\prime}_{\{1\}}$ is also denoted by $\mathrm{Pa}(p)$, we sometimes use the notation 
$\rmZ_{\mathrm{\Pa}}(\rmB^{\prime})$ for the Shimura set $\rmZ_{\{1\}}(\rmB^{\prime})$ and we refer to it as the Shimura set with paramodular level.
\end{itemize}
The same construction as in \S $3.5$ defines an \'etale local system $\VV_{\mu}$ on the Shimura set $\rmZ_{\rmU}(\rmB^{\prime})$ for each character $\mu\in\rmX^{\ast}(\rmT)$. The space of global sections $\rmH^{0}(\rmZ_{\rmU}(\rmB^{\prime}), \VV_{\mu})$ will be denoted by $\calC(\rmZ_{\rmU}(\rmB^{\prime}), \VV_{\mu})$. 

\begin{proposition}\label{Siegel-double}
The supersingular locus $\overline{\rmX}^{\ss}_{\Pa}(p)$ admits the following descriptions.
\begin{enumerate}
\item The irreducible components of $\overline{\rmX}^{\ss}_{\Pa}(p)$ are all one dimensional and are parametrized by the Shimura set
$\rmZ_{\mathrm{Sie}}(\rmB^{\prime})$ with Siegel parahoric level structure.

\item The singular locus of $\overline{\rmX}^{\sin}_{\Pa}(p)$ is parametrized by the Shimura set $\rmZ_{\Pa}(\rmB^{\prime})$ with paramodular level structure.

\end{enumerate}
\end{proposition}

\begin{proof}
By Proposition \ref{BT-para}, the irreducible components of the scheme $\calM_{\Pa}$ corresponds to vertex lattices of type $02$. The derived group of $\GU_{2}(\rmD)$ acts transitively on these lattices and the stabilizer corresponds to the Siegel parahoric $\mathrm{Sie}_{\rmD}(p)=\rmK^{\prime}_{\{02\}}$. Therefore the irreducible components of $\rmM_{\Pa}$ is parametrized by $\GU_{2}(\rmD)/\mathrm{Sie}_{\rmD}(p)$. Now we apply \eqref{RZ-unif} to see that the irreducible components of $\overline{\rmX}^{\ss}_{\Pa}(p)$ are parametrized by 
\begin{equation*}
\rmI(\QQ)\backslash \rmJ_{b}(\QQ_{p})/\rmK^{\prime}_{\{02\}}\times \rmG({\rmB}^{\prime})(\mathbb{A}^{(\infty p)})/\rmU^{(p)}
\end{equation*} 
but this is precisely $\rmZ_{\mathrm{Sie}}(\rmB^{\prime})$. 
By Proposition \ref{BT-para}, the singular locus of the scheme $\rmX_{\Pa}(p)$ corresponds to vertex lattices of type $1$. Then we proceed similarly as above to finish the proof of $(2)$.
\end{proof}
\begin{remark}
This result can also be deduced from  \cite{Yu-doc} although the method there is completely different.
\end{remark}

\section{Cohomological vanishing for paramodular Siegel threefold}
In order to prove the Mazur's principle, we need to analyze the Picard--Lefschetz formula for the nearby cycle cohomology 
\begin{equation*}
\rmH^{3}_{\rmc}(\overline{\rmX}_{\Pa}(p)\otimes\overline{\FF}_{p},\rmR\Psi (\VV)).
\end{equation*}
For this purpose, we will prove a vanishing result for the non-middle-degree cohomology of the special fiber $\overline{\rmX}_{\Pa}(p)$ when localized at a non-Eisenstein maximal ideal $\frakm$ of $\TT$. This result essentially follows from the work of van Hoften in \cite{VH19} but we give a slightly different exposition.  We define a maximal ideal $\frakm$ to be non-Eisenstein if the natural map
\begin{equation*}
\rmH^{i}_{\rmc}(\rmX\otimes\overline{\QQ}, \VV)_{\frakm}\rightarrow \rmH^{i}(\rmX\otimes\overline{\QQ}, \VV)_{\frakm}
\end{equation*}
is an isomorphism for $\rmX=\rmX_{\rmH}$ or $\rmX=\rmX_{\Pa}(p)$. 
\begin{remark}
Suppose $\frakm$ is associated to a residual Galois representation $\overline{\rho}_{\pi, \lambda}$. It is known from \cite[Corollary 3]{MT14} that if $\overline{\rho}_{\pi,\lambda}$ satisfies the assumption that the image of $\overline{\rho}_{\pi,\lambda}(\rmG_{\QQ})$ contains $\GSp_{4}(\FF_{l})$ and that $l+1\geq k_{1}+k_{2}$ for the weight $(k_{1}, k_{2})$ of $\pi$. Then $\frakm$ is indeed non-Eisenstein. 
\end{remark}
We also impose the following assumption on the maximal ideal $\frakm$ throughout this section. 
\begin{itemize}
\item[] The cohomology groups  $\rmH^{i}_{\rmc}(\rmX_{\Pa}(p)\otimes\overline{\QQ}, \VV)_{\frakm}$ and $\rmH^{i}_{\rmc}(\rmX_{\rmH}\otimes\overline{\QQ}, \VV)_{\frakm}$ are both concentrated in the middle degree. 
\end{itemize}
This assumption can be verified in the situations discussed in Remark \ref{CS-van}.

\subsection{Klingen type correspondence}
The Siegel modular threefold with Klingen level structure realizes a correspondence between $\rmX_{\rmH}$ and $\rmX_{\Pa}(p)$. More precisely, $\rmX_{\Kl}(p)$ fits in the following diagram
\begin{equation*}
\begin{tikzcd}
  &\rmX_{\Kl}(p)  \arrow[ld, "p_{1}" '] \arrow[rd, "p_{2}"] &   \\
    \rmX_{\rmH} &                         & \rmX_{\Pa}(p)
\end{tikzcd}
\end{equation*}
where $p_{1}: \rmX_{\Kl}(p)  \rightarrow \rmX_{\rmH}$ is the natural map forgetting the subgroup $\rmH$ and $p_{2}: \rmX_{\Kl}(p)  \rightarrow \rmX_{\Pa}(p)$ is the map sending $\sfA$ to $\sfA/\sfH$. Note that 
\begin{equation*}
\sfA\rightarrow \sfA/\mathsf{H}\rightarrow \sfA/\sfH^{\perp}\rightarrow \sfA^{\vee}
\end{equation*}
 is the map given by $p\lambda$. 

\begin{proposition}
The scheme $\rmX_{\Kl}(p)$ has semistable reduction over $\ZZ_{p}$. Its special fiber $\overline{\rmX}_{\Kl}(p)$ decomposes into $\overline{\rmX}^{\mathrm{e}}_{\Kl}(p)\cup \overline{\rmX}^{\mathrm{m}}_{\Kl}(p)$ where each component admits the following descriptions.
\begin{enumerate}
\item $\overline{\rmX}^{\mathrm{e}}_{\Kl}(p)$ is the Zariski closure of the locus $\overline{\rmX}^{\rme\cdot\circ}_{\Kl}(p)$ where $\sfH$ is \'{e}tale and similarly $\overline{\rmX}^{\mathrm{m}}_{\Kl}(p)$ is the Zariski closure of the locus $\overline{\rmX}^{\mathrm{m}\cdot\circ}_{\Kl}(p)$ where $\sfH$ is multiplicative.
\item Let $\overline{\rmX}^{a}_{\Kl}(p)=\overline{\rmX}^{\mathrm{e}}_{\Kl}(p)\cap \overline{\rmX}^{\mathrm{m}}_{\Kl}(p)$. It is the singular locus of $\overline{\rmX}_{\Kl}(p)$ which is also the locus where $\sfH$ is additive. 
\end{enumerate}
\end{proposition}
\begin{proof}
All of the statements follow from a local model computation. See for example \cite{GT-TaylorWiles}.
\end{proof}

Over $\overline{\rmX}_{\rmH}$, we have the Ekedhal--Oort stratification
\begin{equation*}
\overline{\rmX}_{\rmH}=\overline{\rmX}^{[=2]}_{\rmH}\sqcup \overline{\rmX}^{[=1]}_{\rmH} \sqcup \overline{\rmX}^{[\mathrm{sg}]}_{\rmH} \sqcup \overline{\rmX}^{[\mathrm{sp}]}_{\rmH}.
\end{equation*}
Here the stratum $\overline{\rmX}^{[=2]}_{\rmH}$ agree with the ordinary locus $\overline{\rmX}^{\ord}_{\rmH}$; the stratum $\overline{\rmX}^{[=1]}_{\rmH}$ is the $p$-rank $1$ locus; the stratum $\overline{\rmX}^{[\mathrm{sg}]}_{\rmH}$ is the supergeneral locus, in other words it is the supersingular but non-superspecial locus and the stratum $\overline{\rmX}^{[\mathrm{sp}]}_{\rmH}$ is the superspecial locus which consists of finite number of points.

\begin{proposition}\label{p1}
We can the describe the map $p_{1}: \overline{\rmX}^{\rm{e}}_{\Kl}(p)\rightarrow \overline{\rmX}_{\rmH}$ with respect to the stratification 
\begin{equation*}
\overline{\rmX}_{\rmH}=\overline{\rmX}^{[=2]}_{\rmH}\sqcup \overline{\rmX}^{[=1]}_{\rmH} \sqcup \overline{\rmX}^{[\mathrm{sg}]}_{\rmH} \sqcup \overline{\rmX}^{[\mathrm{sp}]}.
\end{equation*}
\begin{enumerate}
\item ${\overline{\rmX}^{\rme}_{\Kl}}(p)\vert_{ \overline{\rmX}^{[=2]}_{\rmH}}\rightarrow \overline{\rmX}^{[=2]}_{\rmH}$ can be factored into a map which is finite \'{e}tale map of degee $p+1$ and a purely inseperable map of degree $p^{2}$;
\item ${\overline{\rmX}^{\rme}_{\Kl}}(p)\vert_{ \overline{\rmX}^{[=1]}_{\rmH}}\rightarrow \overline{\rmX}^{[=1]}_{\rmH}$ is an isomorphism;
\item ${\overline{\rmX}^{\rme}_{\Kl}}(p)\vert_{ \overline{\rmX}^{[\mathrm{sg}]}_{\rmH}}\rightarrow \overline{\rmX}^{[\mathrm{sg}]}_{\rmH}$ is an isomorphism;
\item The fiber ${\overline{\rmX}^{\rme}_{\Kl}}(p)\vert_{ \overline{\rmX}^{[\mathrm{sp}]}_{\rmH}}$ is a $\PP^{1}$ bundle over a finite set and 
\begin{equation*}
{\overline{\rmX}^{\rme}_{\Kl}}(p)\vert_{ \overline{\rmX}^{[\mathrm{sp}]}_{\rmH}}\rightarrow \overline{\rmX}^{[\mathrm{sp}]}_{\rmH}
\end{equation*}
is the contraction of each $\PP^{1}$ to its base point.
\end{enumerate}
\end{proposition}
\begin{proof}
The first two point are  well-known by elementary computations. For example, we define the relative moduli space $\overline{\rmY}^{\rme}_{\Kl}(p)\vert_{ \overline{\rmX}^{[=2]}_{\rmH}}$ over $\overline{\rmX}^{[=2]}_{\rmH}$ by considering the connected-\'{e}tale exact sequence 
\begin{equation*}
0\rightarrow \sfA^{\circ}[p]\rightarrow \sfA[p]\rightarrow \sfA^{\mathrm{et}}[p]\rightarrow0
\end{equation*}
of the universal abelian scheme $\sfA$ over $\overline{\rmX}^{[=2]}_{\rmH}$ and by taking $\overline{\rmY}^{\rme}_{\Kl}(p)\vert_{ \overline{\rmX}^{[=2]}_{\rmH}}$ as the moduli of subgroup $\sfH$ of $\sfA^{\mathrm{et}}[p]$ of degree $p$. The second point is proved by a similar reasoning. The last two points are proved by an easy Di\'{e}udonne module computation, see \cite[Theorem 1.2]{Yu-doc}.
\end{proof}

We will consider the naive stratification of $\overline{\rmX}_{\Pa}(p)$ into its smooth locus and its singular locus
\begin{equation*}
\overline{\rmX}_{\Pa}(p)=\overline{\rmX}^{\mathrm{sm}}_{\Pa}(p)\sqcup \overline{\rmX}^{\sing}_{\Pa}(p).
\end{equation*}

\begin{proposition}\label{p2}
We can describe the map $p_{2}: \overline{\rmX}^{\mathrm{e}}_{\Kl}(p)\rightarrow \overline{\rmX}_{\Pa}(p)$ with respect to the stratification
\begin{equation*}
\overline{\rmX}_{\Pa}(p)=\overline{\rmX}^{\mathrm{sm}}_{\Pa}(p)\sqcup \overline{\rmX}^{\sing}_{\Pa}(p)
\end{equation*}
in the following way.
\begin{enumerate}
\item $\overline{\rmX}^{\mathrm{e}}_{\Kl}(p)\vert_{\overline{\rmX}^{\mathrm{sm}}_{\Pa}(p)}\rightarrow \overline{\rmX}^{\mathrm{sm}}_{\Pa}(p)$ is an isomorphism;
\item $\overline{\rmX}^{\mathrm{e}}_{\Kl}(p)\vert_{\overline{\rmX}^{\mathrm{sin}}_{\Pa}(p)}$ is a $\PP^{1}$ bundle over a finite set of points. The map 
\begin{equation*}
\overline{\rmX}^{\mathrm{e}}_{\Kl}(p)\vert_{\overline{\rmX}^{\mathrm{sin}}_{\Pa}(p)}\rightarrow \overline{\rmX}^{\mathrm{sin}}_{\Pa}(p)
\end{equation*}
is the contraction of each $\PP^{1}$ to its base point.
\end{enumerate}
\end{proposition}
\begin{proof}
This follows from a similar computation as in the previous proposition. A proof of these statements can be found in \cite[Lemma 5.3.2]{VH19}.
\end{proof}

\subsection{Comparison of cohomologies} We recall the definition of a \emph{(semi-)small morphism}. Let $f: \rmX\rightarrow \rmY$ be a morphism between two finitely generated schemes over a field $k$. Suppose that $\rmY$ admits a partition $\rmY=\sqcup^{r}_{\alpha=0} \rmY_{\alpha}$ where $\rmY_{0}$ is open and dense in $\rmY$. 
\begin{definition}
We say $f$ is semi-small with respect to the partition
$\rmY=\sqcup^{r}_{\alpha=0} \rmY_{\alpha}$ if 
\begin{equation*}
2\cdot\mathrm{dim}(f^{-1}(y))\leq n-\mathrm{dim}(\rmY_{\alpha}) 
\end{equation*}
holds for all $\alpha$ and all $y\in \rmY_{\alpha}$. We say $f$ is small if $f$ is semismall and 
\begin{equation*}
2\cdot\mathrm{dim}(f^{-1}(y))< n-\mathrm{dim}(\rmY_{\alpha}) 
\end{equation*}
for all $\alpha>0$ and $y\in \rmY_{\alpha}$.
\end{definition}

\begin{corollary}
The morphism $p_{1}: \overline{\rmX}^{\rm{e}}_{\Kl}(p)\rightarrow \overline{\rmX}_{\rmH}$ is small with respect to the stratification 
\begin{equation*}
\overline{\rmX}_{\rmH}=\overline{\rmX}^{[=2]}_{\rmH}\sqcup \overline{\rmX}^{[=1]}_{\rmH} \sqcup \overline{\rmX}^{[\mathrm{sg}]}_{\rmH} \sqcup \overline{\rmX}^{[\mathrm{sp}]}.
\end{equation*}
The morphism $p_{2}: \overline{\rmX}^{\mathrm{e}}_{\Kl}(p)\rightarrow \overline{\rmX}_{\Pa}(p)$ is small with respect to the stratification
\begin{equation*}
\overline{\rmX}_{\Pa}(p)=\overline{\rmX}^{\mathrm{sm}}_{\Pa}(p)\sqcup \overline{\rmX}^{\mathrm{sin}}_{\Pa}(p).
\end{equation*}
\end{corollary}
\begin{proof}
This follows immediately from Proposition \ref{p1} and Proposition \ref{p2}.
\end{proof}

Since $\overline{\rmX}^{\mathrm{e}}_{\Kl}(p)$ is smooth, $p_{2}$ is in fact a small resolution of $\overline{\rmX}_{\Pa}(p)$.  We have the following useful result regarding to the push-forward of a perverse sheaf along a small morphism \cite[Lemma 7.5]{KW-perverse}.

\begin{lemma}\label{inter}
Suppose $\rmX$ is smooth and equidimensional over the base field $k$ and $f: \rmX\rightarrow Y$ is small and proper with respect to the partition
$\rmY=\sqcup^{r}_{\alpha=0} \rmY_{\alpha}$. Let $j: \rmY_{0}\hookrightarrow \rmY$ be the open inclusion.
Let $A$ be a smooth perverse sheaf on $\rmX$. Then
\begin{equation*}
\rmR f_{\ast}A=j_{!\ast}(\rmR f_{\ast}A\vert_{\rmY_{0}}).
\end{equation*}
\end{lemma}

\begin{proposition}\label{Kl-Pa}
For $i\geq 4$, we have the following isomorphism
\begin{equation*}
\rmH^{i}(\overline{\rmX}^{\rme}_{\Kl}(p), \VV)\cong \rmH^{i}(\overline{\rmX}_{\Pa}(p), \VV).
\end{equation*}
\end{proposition}
\begin{proof}
Note that $p_{2}$ is a small resolution. The result follows from the following elementary computations
\begin{equation*}
\begin{aligned}
\rmH^{i}(\overline{\rmX}^{\rme}_{\Kl}(p), \VV)&\cong \rmH^{i}(\overline{\rmX}_{\Pa}(p), \rmR p_{1\ast}\VV)\\
&\cong \rmH^{i}(\overline{\rmX}_{\Pa}(p), j_{!\ast}\rmR p_{1\ast}\VV\mid_{\overline{\rmX}^{\mathrm{sm}}_{\Pa}(p)})\\ 
&\cong \rmH^{i}(\overline{\rmX}_{\Pa}(p), j_{!\ast}\VV)\\
&\cong \rmH^{i}(\overline{\rmX}_{\Pa}(p), \VV).\\ 
\end{aligned}
\end{equation*}
The second isomorphism follows from \cite[Lemma 7.5]{KW-perverse} and last isomorphism follows from the fact that $\overline{\rmX}_{\Pa}(p)$ has isolated singularities and simple calculations using excision long exact sequence. 
\end{proof}

Recall that the supersingular locus of  $\overline{\rmX}^{\rme}_{\Kl}(p)$ is one dimensional and is uniformized by the union of Shimura sets $\rmZ_{\mathrm{Sie}}(\rmB^{\prime})\cup \rmZ_{\mathrm{Pa}}(\rmB^{\prime})$ by Theorem \ref{Siegel-double}. Therefore we obtain the following Gysin map
\begin{equation*}
\mathrm{inc}^{\Kl}_{\mathrm{ss}!}: \mathcal{C}(\rmZ_{\mathrm{Sie}}(\rmB^{\prime}), \VV)\oplus \mathcal{C}(\rmZ_{\mathrm{Pa}}(\rmB^{\prime}), \VV)\rightarrow\rmH^{4}(\overline{\rmX}^{\rme}_{\Kl}(p), \VV(2)).
\end{equation*}

On the other hand, the supersingular locus of $\overline{\rmX}_{\rmH}$ is also one dimensional and is parametrized by the Shimura set $\rmZ_{\mathrm{Sie}}(\rmB^{\prime})$ and hence we also have a Gysin map
\begin{equation*}
\mathrm{inc}^{\rmH}_{\mathrm{ss}!}: \mathcal{C}(\rmZ_{\mathrm{Pa}}(\rmB^{\prime}), \VV)\rightarrow\rmH^{4}(\overline{\rmX}_{\rmH}, \VV(2)).
\end{equation*}
The following is due to to van Hoften \cite[Theorem 7.12, Lemma 7.1.3, 7.14]{VH19}.

\begin{proposition}\label{vanpa}
We have the following statements
\begin{enumerate}
\item The cohomology group
\begin{equation*}
\rmH^{4}(\overline{\rmX}_{\Pa}(p), \VV)_{\frakm}
\end{equation*}
vanishes.

\item Similarly, the cohomology group
\begin{equation*}
\rmH^{2}_{\rmc}(\overline{\rmX}_{\Pa}(p), \VV)_{\frakm}.
\end{equation*}
vanishes.
\end{enumerate}
\end{proposition}

\begin{proof}
We only treat the case of $i=4$. The other case is given by dualizing the argument given below. The proof essentially is given by following commutative diagram
\begin{equation*}
\begin{tikzcd}
\calC(\rmZ_{\mathrm{Pa}}(\rmB^{\prime}), \VV) \arrow[d] \arrow[r, "\mathrm{inc}^{\mathrm{H}}_{\mathrm{ss}!}"] &  \mathrm{H}^{4}(\overline{\rmX}_{\rmH}, \VV(2))  \arrow[d] \\
 \mathcal{C}(\rmZ_{\mathrm{Sie}}(\rmB^{\prime}), \VV)\oplus \mathcal{C}(\rmZ_{\mathrm{Pa}}(\rmB^{\prime}), \VV) \arrow[r, "\mathrm{inc}^{\Kl}_{\mathrm{ss}!}"] & \mathrm{H}^{4}(\overline{\rmX}^{\mathrm{e}}_{\Kl}(p), \VV(2))  \\
\calC(\rmZ_{\mathrm{Pa}}(\rmB^{\prime}), \VV) \arrow[u] \arrow[r, "\alpha"] &  \mathrm{H}^{4}(\overline{\rmX}_{\Pa}(p), \VV(2))  \arrow[u, "\cong"].
\end{tikzcd}
\end{equation*}
Note the boundary map 
\begin{equation*}
\bigoplus\limits_{\sigma\in\Sigma_{\Pa}(p)}\rmR^{3}\Phi_{\sigma}(\VV(2))_{\frakm}  \xrightarrow{\alpha} \mathrm{H}^{4}(\overline{\rmX}_{\Pa}(p), \VV(2))_{\frakm} 
\end{equation*}
in the exact sequence \ref{van-cycle-ext} agrees with the composite map 
\begin{equation*}
\calC(\rmZ_{\mathrm{Pa}}(\rmB^{\prime}), \VV)\rightarrow \mathrm{H}^{4}(\overline{\rmX}^{\mathrm{e}}_{\Kl}(p), \VV(2))
\end{equation*}
given in the above diagram if we identify $\bigoplus_{\sigma\in\Sigma_{\Pa}(p)}\rmR^{3}\Phi_{\sigma}(\VV(2))_{\frakm}$ with $\calC(\rmZ_{\mathrm{Pa}}(\rmB^{\prime}), \VV)$ using \ref{Siegel-double} and identify $\mathrm{H}^{4}(\overline{\rmX}^{\mathrm{e}}_{\Kl}(p), \VV(2))$ with $\mathrm{H}^{4}(\overline{\rmX}_{\Pa}(p), \VV(2))_{\frakm}$ using Proposition \ref{Kl-Pa}. This observation is clear from the description of the Klingen correspondence as in Proposition \ref{p1} and Proposition \ref{p2}. But note that $\mathrm{H}^{4}(\overline{\rmX}_{\rmH}, \VV(2))_{\frakm}\cong\mathrm{H}^{4}({\rmX}_{\rmH}\otimes\overline{\QQ}_{p}, \VV(2))_{\frakm}$ vanishes under our assumption. Therefore the image of $\alpha$ is zero. On the other hand, the map $\alpha$ is also surjective under our assumption and hence $\mathrm{H}^{4}(\overline{\rmX}_{\Pa}(p), \VV(2))_{\frakm}$ vanishes as desired. 
\end{proof}

\begin{remark}
One may naturally wonder why the contribution 
\begin{equation*}
\mathrm{inc}_{\mathrm{ss} !}: \mathcal{C}(\rmZ_{\mathrm{Sie}}(\rmB^{\prime}), \VV)\rightarrow\rmH^{4}(\overline{\rmX}_{\Pa}(p), \VV(2)).
\end{equation*}
of the Tate cycles on the supersingular locus also vanishes. We could not give a satisfactory answer to this observation. The reason might be the following: for a cuspidal automorphic representation $\pi$ of general type appears in $\mathcal{C}(\rmZ_{\mathrm{Sie}}(\rmB^{\prime}), \VV)$, the local component $\pi_{p}$ of $\pi$ must corresponds to the type $\mathrm{II}a$ representation under the Jacquet--Langlands correspondence by an examination of the table $3$ in \cite{Schmi05}. Hence $\mathcal{C}(\rmZ_{\mathrm{Sie}}(\rmB^{\prime}), \VV)$ does not provide new representations than the space $\calC(\rmZ_{\mathrm{Pa}}(\rmB^{\prime}), \VV)$ already does.
\end{remark}

\section{Mazur's principle for the paramodular Siegel threefold}

\subsection{Mazur's principle} We recall that we are concerned with a cuspidal automorphic representation $\pi$ for $\GSp_{4}(\mathbb{A})$ which is of general type and whose weight is $(k_{1}, k_{2})$ with $k_{1}\geq k_{2}\geq 3$ and $k_{1}\equiv k_{2}\mod 2$. We now assume that the local component $\pi_{p}$ of $\pi$ is a type IIa representation. In this case, its associated Galois representation $\rho_{\pi,\lambda}$ is realized in the Galois module $\rmH^{3}(\rmX_{\Pa}(p)\otimes\overline{\QQ}, \VV)$. We first give a list of assumptions that we will impose on the residual Galois representation for Mazur's principle.
\begin{assumption}\label{Pa(p)-assumptions}
We will make the following assumptions throughout this section.
\begin{enumerate}
\item The residual Galois representation $\overline{\rho}_{\pi, \lambda}$ is absolutely irreducible.

\item Let $\frakm$ be the maximal ideal corresponding to $\overline{\rho}_{\pi, \lambda}$ as explained in Definition \ref{maximal-ideal}, we assume that
\begin{equation*}
\mathrm{H}^{i}_{\rmc}(\rmX_{\Pa}(p)\otimes\overline{\QQ}, \mathbb{V})_{\frakm}
\end{equation*}
vanishes unless $i=3$.

\item The cohomology group
\begin{equation*} \label{semi}
\mathrm{H}^{3}_{\rmc}(\rmX_{\Pa}(p)\otimes\overline{\QQ}, \mathbb{V})_{\mathfrak{m}}\otimes k 
\end{equation*}
is semisimple as a Galois module. This implies that
\begin{equation*}
\mathrm{H}^{3}_{\rmc}(\rmX_{\Pa}(p)\otimes\overline{\QQ}, \mathbb{V})_{\mathfrak{m}}\otimes k=\overline{\rho}^{\oplus s}_{\pi,\lambda}
\end{equation*}
for some positive number $s$. 
\end{enumerate}
\end{assumption}

\begin{remark}\label{CS-van}
We remark on $(2)$ of the above assumption. This assumption is known in the following cases.
\begin{itemize}
\item Under the ordinary assumption of $\rho_{\pi, \lambda}$, the large image assumption on $\overline{\rho}_{\pi, \lambda}$  and the assumption $l+1\geq k_{1}+k_{2}$, then $(2)$ indeed holds true. This follows from \cite[Theorem 1]{MT14}. Under the assumption that $\overline{\rho}_{\pi,\lambda}$ has large image, $(2)$ also follows from an unpublished work of E. Urban using the fact that the complement of the Igusa divisor is affine. This is mentioned in  \cite{MT14}.

\item Under the assumption that $\mu$ is sufficiently regular, it follows from \cite[Theorem 10.1]{LS13} that $(2)$ holds. This is the assumption used in \cite{VH19}.
\item Under the assumption that there is a prime $v$ where $\overline{\rho}_{\pi,\lambda}\vert_{\rmG_{\QQ_{v}}}$ is unramified and the Hecke parameters  is generic in the sense that the ratio of them is not $v^{\pm1}$, $(2)$ follows from combining the work \cite{Kos-b} of Koshikawa and forthcoming work of M. Santos on the semi-perversity of the Hodge--Tate period map. This can be seen as a generalization of the pioneering work of Scholze--Caraiani \cite{CS-cpt, CS-noncpt} to more general Shimura varieties. 
\end{itemize}

We will refer to part $(3)$ of this assumption as the semisimple assumption. For Galois representations appearing on the Shimura curves, the analogues assumption holds by the main result of \cite{BLR91}. On the other hand, work of Nekov\'{a}\v{r} in \cite{Nek} should shed some lights on verifying this assumption.
\end{remark}

The main result of this section is the following theorem which we will refer to as the Mazur's principle. 

\begin{theorem}\label{Mz-principle}
Let $\pi$ be a cuspidal automorphic representation of $\GSp_{4}(\mathbb{A})$ of general type which is of weight $(k_{1}, k_{2})$ satisfying $k_{1}\geq k_{2}\geq 3$ and $k_{1}\equiv k_{2} \mod 2$. Let $p$ be a prime distinct from $l$ and such that $p\not\equiv 1 \mod l$. Let $\rmU=\Pa(p)\rmU^{p}\subset \GSp_{4}(\QQ_{p})\GSp_{4}(\mathbb{A}^{(\infty p)})$ be a neat open compact subgroup such that $\pi^{\rmU}\neq 0$. We assume that $\pi_{p}$ is ramified of type $\mathrm{II}a$ in the classification of Schmidt. Suppose the following assumptions hold. 
\begin{enumerate}
\item The residual Galois representation $\overline{\rho}_{\pi, \lambda}$ satisfies Assumption \ref{Pa(p)-assumptions}. 
\item The residual Galois representation $\overline{\rho}_{\pi, \lambda}\mid_{\rmG_{\QQ_{p}}}$ is unramified.
\end{enumerate}
Then there exists a cuspidal automorphic representation $\breve{\pi}$ of the same weight as $\pi$ such that 
\begin{equation*}
\overline{\rho}_{\pi, \lambda}\cong \overline{\rho}_{\breve{\pi}, \lambda}
\end{equation*}
and $\breve{\pi}_{p}$ is an unramified principal series.
\end{theorem} 

\subsection{Picard--Lefschetz formula} 
In this subsection, we are mainly concerned with analyzing the following diagram which we refer to as the Picard--Lefschetz formula for the Galois module $\rmH^{3}(\overline{\rmX}_{\Pa}(p)\otimes\overline{\FF}_{p}, \rmR\Psi(\VV))_{\frakm}$. We let $\Sigma_{\Pa}(p)$ be the singular locus of $\overline{\rmX}_{\Pa}(p)_{\overline{\FF}_{p}}$.
\begin{equation*}
\begin{tikzcd}
\mathrm{H}^{3}(\overline{\rmX}_{\Pa}(p)_{\overline{\FF}_{p}}, \rmR\Psi(\VV)(1))_{\frakm} \arrow[r, "\mathrm{ca}"] \arrow[d, "\mathrm{N}"] & \bigoplus\limits_{\sigma\in\Sigma_{\Pa}(p)}\rmR^{3}\Phi_{\sigma}(\VV(1))_{\frakm} \arrow[d, "\mathrm{\rmN}_{\Sigma}"]  \arrow[r, "\alpha"] & \mathrm{H}^{4}(\overline{\rmX}_{\Pa}(p)_{\overline{\FF}_{p}}, \VV(1))_{\frakm}\\
\mathrm{H}^{3}_{\rmc}(\overline{\rmX}_{\Pa}(p)_{\overline{\FF}_{p}}, \rmR\Psi(\VV))_{\frakm}   &\bigoplus\limits_{\sigma\in\Sigma_{\Pa}(p)}\rmH^{3}_{\{\sigma\}}(\overline{\rmX}_{\Pa}(p)_{\overline{\FF}_{p}}, \rmR\Psi(\VV))_{\frakm}  \arrow[l,"\mathrm{coca}"] & \mathrm{H}^{2}_{\rmc}(\overline{\rmX}_{\Pa}(p)_{\overline{\FF}_{p}}, \VV)_{\frakm}  \arrow[l, "\beta"]                
\end{tikzcd}          
\end{equation*}
By general theory, the map $\mathrm{ca}$ and $\mathrm{coca}$ are dual to each other and therefore $\mathrm{ca}$ is injective if and only if $\mathrm{coca}$ is surjective.  The following proposition is first due to van Hoften \cite[Proposition 7.1.2]{VH19}. 

\begin{proposition}\label{injbeta-para}
The canonical map $\mathrm{ca}$ localized at $\mathfrak{m}$ is injective
\begin{equation}
\mathrm{ca}:\mathrm{H}^{3}(\overline{\rmX}_{\Pa}(p)\otimes\overline{\FF}_{p},\rmR\Psi(\mathbb{V})(1))_{\frakm} \rightarrow  \bigoplus_{\sigma\in\Sigma_{p}(\Pa(p))} \rmR^{3}\Phi_{\sigma}(\mathbb{V}(1))_{\frakm}
\end{equation}
and dually
\begin{equation}
\mathrm{coca}: \bigoplus_{\sigma\in\Sigma_{p}(\Pa(p))} \mathrm{H}^{3}_{\{\sigma\}}(\overline{\rmX}_{\Pa}(p)\otimes\overline{\FF}_{p}, \mathbb{V})_{\frakm} \rightarrow \mathrm{H}^{3}_{\rmc}(\overline{\rmX}_{\Pa}(p)\otimes \overline{\FF}_{p},\rmR\Psi(\mathbb{V}))_{\frakm}
\end{equation}
is surjective. 
\end{proposition}
\begin{proof}
We have proved that $\mathrm{H}^{4}(\overline{\rmX}_{\Pa}(p), \VV(1))_{\frakm}$
vanishes in Proposition \ref{vanpa} and $\mathrm{ca}$ is surjective. Similarly, we have proved that $\mathrm{H}^{2}_{\rmc}(\overline{\rmX}_{\Pa}(p), \VV)_{\frakm}$ vanishes and therefore $\mathrm{coca}$ is injective.
\end{proof}

We have a decomposition of the Galois module
\begin{equation}\label{decomp}
\rmH^{3}_{\rmc}(\rmX_{\Pa}(p)\otimes{\overline{\QQ}}, \VV)_{\frakm}\otimes E_{\lambda}=\bigoplus_{\vec{\pi}}\rho^{\oplus m(\vec{\pi})}_{\vec{\pi}, \lambda}
\end{equation}
where $\vec{\pi}$ runs through all the autormorphic representation $\vec{\pi}$ whose Galois representation $\rho_{\vec{\pi}, \lambda}$ has the same residual Galois representation with $\rho_{\pi, \lambda}$ and $m(\vec{\pi})$ is the multiplicity of $\vec{\pi}$ which could be zero. Let $\vec{\pi}$ be an automorphic representation appearing in the above decomposition and such that $\vec{\pi}_{p}$ is ramified of type $\mathrm{IIa}$. Then we have the following description of $\rho_{\vec{\pi}, \lambda}$ using the Picard--Lefschetz formula.
\begin{proposition}\label{van-llc}
Let $\vec{\pi}$ be a cuspidal automorphic representation appearing in \eqref{decomp} such that $\vec{\pi}_{p}$ is ramified of type $\mathrm{IIa}$. 
\begin{enumerate}
\item The space 
\begin{equation*}
\mathrm{Gr}_{-1,\frakm}\otimes E_{\lambda}=\bigoplus\limits_{\sigma\in\Sigma_{p}(\Pa(p))}\mathrm{H}^{3}_{\{\sigma\}}(\overline{\rmX}_{\Pa}(p)\otimes{\overline{\FF}_{p}}, \mathbb{V})_{\frakm}\otimes E_{\lambda}
\end{equation*}
contributes to each $\rho_{\vec{\pi}, \lambda} \vert_{\rmG_{\QQ_{p}}}$ as a $1$-dimensional subspace;
\item The space 
\begin{equation*}
\mathrm{Gr}_{1,\frakm}\otimes {E_{\lambda}}=\bigoplus\limits_{\sigma\in\Sigma_{p}(\Pa(p))}\rmR^{3}\Phi_{\sigma}(\mathbb{V})_{\frakm}\otimes E_{\lambda}
\end{equation*}
contributes to each $\rho_{\vec{\pi}, \lambda}\vert_{\rmG_{\QQ_{p}}}$ as a $1$-dimensional quotient; 

\item  The quotient space 
\begin{equation*}
\mathrm{Gr}_{0,\frakm}\otimes E_{\lambda}=\frac{\rmH^{3}_{\rmc}(\overline{\rmX}_{\Pa}(p)\otimes{\overline{\FF}_{p}}, \VV)_{\frakm}\otimes E_{\lambda}}{\bigoplus\limits_{\sigma\in\Sigma_{p}(\Pa(p))}\mathrm{H}^{3}_{\{\sigma\}}(\overline{\rmX}_{\Pa}(p)\otimes{\overline{\FF}_{p}}, \mathbb{V})_{\frakm}\otimes E_{\lambda}}
\end{equation*} 
contributes to each $\rho_{\vec{\pi}, \lambda}\vert_{\rmG_{\QQ_{p}}}$ as a $2$-dimensional sub-quotient.
\end{enumerate}
\end{proposition}

\begin{proof}
The local-global compatibility of Langlands correspondence as in Theorem \ref{Gal-GSp4} implies that the monodromy operator $\rmN$ has one dimensional image. Thus the Picard--Lefschetz formula and Proposition \ref{injbeta-para} implies that 
\begin{equation*}
\bigoplus\limits_{\sigma\in\Sigma_{p}(\Pa(p))} \mathrm{H}^{3}_{\{\sigma\}}(\overline{\rmX}_{\Pa}(p)\otimes\overline{\FF}_{p}, \mathbb{V})\otimes E_{\lambda} 
\end{equation*}
has one dimensional contribution to $\rho_{\vec{\pi},\lambda}$. Since the local monodromy map $\rmN_{\Sigma}$ is an isomorphism, the space
\begin{equation*}
\bigoplus\limits_{\sigma\in\Sigma_{p}(\Pa(p))}\rmR^{3}\Phi_{\sigma}(\VV)_{\frakm}\otimes E_{\lambda}
\end{equation*}
contributes to $\rho_{\vec{\pi}, \lambda\mid \rmG_{\QQ_{p}}}$ as a $1$-dimensional quotient. Then the third claim is also clear from what we have proved.
\end{proof}

\subsection{Proof of the Mazur's principle} Now we come to the proof of Theorem \ref{Mz-principle}. We will prove the theorem by contradiction. Assume that we \emph{can not} find such a $\breve{\pi}$ in the statement of the theorem. This means each $\vec{\pi}$ appearing in \eqref{decomp} is ramified of type $\mathrm{IIa}$ at $p$. We first prove a Eichler--Shimura type congruence relation on the space of vanishing cycles 
\begin{equation*}
\bigoplus\limits_{\sigma\in\Sigma_{p}(\Pa(p))}\rmR^{3}\Phi_{\sigma}(\mathbb{V})_{\frakm} 
\end{equation*}
on $\overline{\rmX}_{\Pa}(p)$ under this assumption.

\begin{lemma}\label{cong-up-pa}
Suppose that all the $\vec{\pi}$ appears in \eqref{decomp} are ramified at $p$. 
\begin{enumerate}
\item  The Frobenius action $\mathrm{Frob}_{p}$ on the Galois submodule 
\begin{equation*}
\bigoplus\limits_{\sigma\in\Sigma_{p}(\Pa(p))}\mathrm{H}^{3}_{\{\sigma\}}(\overline{\rmX}_{\Pa}(p)\otimes{\overline{\FF}_{p}}, \rmR\Psi(\mathbb{V}))_{\frakm}
\end{equation*}
is given by the operator $u_{p}p$. 

\item The Frobenius action $\mathrm{Frob}_{p}$ on the Galois subquotient  
\begin{equation*}
\bigoplus_{\sigma\in\Sigma_{p}(\Pa(p))}\rmR^{3}\Phi_{\sigma}(\mathbb{V})_{\frakm}
\end{equation*}
is given by the operator $u_{p}$. 
\end{enumerate}
\end{lemma}
\begin{proof}
Examine the Weil--Deligne representation corresponding to $\rho_{\vec{\pi},\lambda}$ in \eqref{weil-del} and the shape of the monodromy operator \eqref{mono}, $\bigoplus\limits_{\sigma\in\Sigma_{p}(\Pa(p))}\mathrm{H}^{3}_{\{\sigma\}}(\rmX_{\Pa}(p)\otimes{\overline{\FF}_{p}}, \rmR\Psi(\mathbb{V}))_{\frakm}\otimes{E_{\lambda}}$ contributes to the Galois representation $\rho_{\vec{\pi},\lambda\mid_{\rmG_{\QQ_{p}}}}$ as the character $\chi\sigma\vert\cdot\vert^{-1}$. Since the local mondromy operator $\rmN_{\Sigma}$ induces an isomorphism between $\bigoplus\limits_{\sigma\in\Sigma_{p}(\Pa(p))}\rmR^{3}\Phi_{\sigma}(\mathbb{V})_{\frakm}(1)\otimes E_{\lambda}$ and $\bigoplus\limits_{\sigma\in\Sigma_{p}(\Pa(p))}\mathrm{H}^{3}_{\{\sigma\}}(\rmX_{\Pa}(p)_{\overline{\FF}_{p}}, \rmR\Psi(\mathbb{V}))_{\frakm}\otimes{E_{\lambda}}$. Then the result follows from Lemma \ref{Atkin-Lehner}. 
\end{proof}

\begin{myproof}{Theorem}{\ref{Mz-principle}}
We continue with the proof of the Mazur's principle. Recall that we have assumed that in the decomposition \ref{decomp}
\begin{equation}
\rmH^{3}_{\mathrm{c}}(\rmX_{\Pa}(p)\otimes\overline{\QQ}, \VV)_{\frakm}= \bigoplus_{\vec{\pi}}\rho^{\oplus m(\vec{\pi})}_{\vec{\pi}, \lambda},
\end{equation}
each $\vec{\pi}$ is ramified but $\overline{\rho}_{\vec{\pi}, \lambda}\vert_{\rmG_{\QQ_{p}}}\cong \overline{\rho}_{\pi, \lambda}\vert_{\rmG_{\QQ_{p}}}$ is unramified. We also have a decomposition 
\begin{equation}\label{mod-l-decomp}
\rmH^{3}_{\mathrm{c}}(\rmX_{\Pa}(p)\otimes\overline{\QQ}, \VV)_{\frakm}\otimes k= \overline{\rho}^{\oplus s}_{\pi, \lambda}
\end{equation}
for some positive integer $s$ by our semi-simple assumption. As the representation $\overline{\rho}_{\pi, \lambda}$ is unramified, the monodromy operator $\rmN\otimes k$ degenerates to $0$ on  $\rmH^{3}_{\mathrm{c}}(\overline{\rmX}_{\Pa}(p)\otimes\overline{\FF}_{p}, \rmR\Psi(\VV))_{\frakm}\otimes k$. The monodromy filtration 
\begin{equation*}
0\subset_{\mathrm{Gr}_{-1,\frakm}}\mathrm{F}_{-1}\mathrm{H}^{3}_{\rmc, \frakm}\subset_{\mathrm{Gr}_{0,\frakm}} \mathrm{F}_{0}\mathrm{H}^{3}_{\rmc,\frakm} \subset_{\mathrm{Gr}_{1,\frakm}} \mathrm{F}_{1}\mathrm{H}^{3}_{\rmc, \frakm}.
\end{equation*}
of  
\begin{equation*}
\mathrm{H}^{3}_{\rmc, \frakm}:=\mathrm{H}^{3}_{\rmc}(\overline{\rmX}_{\Pa}(p)\otimes{\overline{\FF}_{p}}, \rmR\Psi(\VV))_{\frakm}=\mathrm{H}^{3}_{\rmc}(\rmX_{\Pa}(p)\otimes{\overline{\QQ}_{p}}, \VV)
\end{equation*}
is determined by the Picard--Lefschetz formula and thus its successive quotient is given by
\begin{equation}\label{mono-fil}
\begin{aligned}
&\mathrm{Gr}_{-1,\frakm}= \Coker(\beta)=\bigoplus\limits_{\sigma\in\Sigma_{p}(\Pa(p))}\mathrm{H}^{3}_{\{\sigma\}}(\rmX_{\Pa}(p)\otimes{\overline{\FF}_{p}}, \rmR\Psi(\mathbb{V}))_{\frakm};\\
&\mathrm{Gr}_{0,\frakm}=\frac{\mathrm{H}^{3}_{\rmc}(\rmX_{\Pa}(p)\otimes{\overline{\FF}_{p}}, \VV)_{\frakm}}{\bigoplus\limits_{\sigma\in\Sigma_{p}(\Pa(p))}\mathrm{H}^{3}_{\{\sigma\}}(\rmX_{\Pa}(p)\otimes{\overline{\FF}_{p}}, \rmR\Psi(\mathbb{V}))_{\frakm}};\\
&\mathrm{Gr}_{1,\frakm}=\Ker(\alpha)=\bigoplus\limits_{\sigma\in\Sigma_{p}(\Pa(p))}\rmR^{3}\Phi_{\sigma}(\mathbb{V})_{\frakm}.\\
\end{aligned}
\end{equation}
 
By the torsion-freeness of $\mathrm{Gr}_{1,\frakm}=\bigoplus\limits_{\sigma\in\Sigma}\rmR^{3}\Phi_{\sigma}(\mathbb{V})_{\frakm}$, the natural exact sequence
\begin{equation*}
0\rightarrow \mathrm{Gr}_{0,\frakm}\rightarrow (\mathrm{H}^{3}_{\rmc,\frakm}/\mathrm{F}_{-1}\mathrm{H}^{3}_{\rmc, \frakm})\rightarrow \mathrm{Gr}_{1,\frakm}\rightarrow 0
\end{equation*}
induces the exact sequence
\begin{equation*}
0\rightarrow \mathrm{Gr}_{0,\frakm}\otimes k\rightarrow (\mathrm{H}^{3}_{\rmc,\frakm}/\mathrm{F}_{-1}\mathrm{H}^{3}_{\rmc, \frakm})\otimes k\rightarrow \mathrm{Gr}_{1,\frakm}\otimes k\rightarrow 0.
\end{equation*}
Since $\rmN\otimes k$ degenerates to zero,  $\mathrm{F}_{-1}\mathrm{H}^{3}_{\rmc, \frakm}$ is contained in $\lambda\mathrm{H}^{3}_{\rmc, \frakm}$. It follows then $\mathrm{Gr}_{1,\frakm}\otimes k$ contributes to each $\overline{\rho}_{\pi, \lambda} \vert_{\rmG_{\QQ_{p}}}$ in \ref{mod-l-decomp} as the two dimensional space where $\mathrm{Frob}_{p}$ acts by $\chi\sigma(p)p$ and $\chi\sigma(p)p^{2}$ (which are the Hecke parameters of the twisted Steinberg part $\chi\sigma\mathrm{St}_{\GL_{2}}$ of $\pi_{p}$). On the other hand, by Lemma \ref{cong-up-pa}, we known that $\mathrm{Frob}_{p}$ acts on  $\mathrm{Gr}_{1,\frakm}\otimes k$ by $\chi\sigma(p)p^{2}$ modulo $\lambda$. It then follows that $p\equiv 1 \mod l$ which is a contradiction. This finishes the proof of Theorem \ref{Mz-principle}.
\end{myproof}

\section{Quaternionic unitary Shimura variety of paramodular level}

\subsection{Quaternionic unitary Shimura variety} Let $\rmB=\rmB_{pq}$ be the indefinite quaternion algebra with discriminant $pq$ over $\QQ$. Here $p$ and $q$ are two distinct primes that are disctinct from $l$. Let $*$ be a \emph{neben-involution} on $\rmB$ defined as in \cite[A.4]{KR00} and $\mathcal{O}_{\rmB}$ be a maximal order fixed by $*$. We set $\rmV=\rmB\oplus \rmB$ and let 
\begin{equation*}
(\cdot, \cdot): \rmV\times \rmV\rightarrow \QQ
\end{equation*}
be the alternating form defined in \eqref{qua-herm}.  This form satisfies the following equation $$(av_{1}, v_{2})=(v_{1}, a^{*}v_{2})$$ for all $v_{1}, v_{2}\in \rmV$ and $a\in \rmB$.  Let $\rmG(\rmB)=\GU_{2}(\rmB)$ be the quaternionic unitrary group of degree $2$ defined as in \eqref{Global-quauni}. Since $\rmB$ splits over $\RR$. We have the identification $\rmG(\RR)\cong \GSp_{4}(\RR)$. Let $\mathbb{S}=\mathrm{Res}_{\CC/\RR}\mathbb{\rmG}_{m}$ be the Deligne torus and let $h: \mathbb{S}\rightarrow \rmG_{\RR}$ be the Hodge cocharacter that it induces the miniscule cocharacter 
\begin{equation*}
\mu: \mathbb{G}_{m, \CC}\rightarrow \mathrm{G}_{\CC}\cong \GSp(4)_{\CC}
\end{equation*}
that sends $z\in \mathbb{G}_{m, \CC}$ to  $\text{diag}(z,z,1,1)\in  \GSp(4)_{\CC}$. Let $\rmU$ be a neat open compact subgroup of $\GU_{2}(\rmB)(\mathbb{A}^{(\infty)})$. Consider the quaternionic unitary Shimura variety $\rmX_{\rmU}(\rmB)$  whose $\CC$-points is given by the symmetric space
\begin{equation}
\rmX_{\rmU}(\rmB)(\mathbb{C})=\rmG(\rmB)(\QQ)\backslash(\GSp_{4}(\RR)/\rmK^{h}\times \rmG(\rmB)(\mathbb{A}^{(\infty)}))/\rmU. 
\end{equation}
Then $\rmX_{\rmU}(\rmB)$ has a canonical model over $\QQ$.  Since $\GU_{2}(\rmB)(L)=\GSp_{4}(L)$ for a suitable quadratic extension $L/\QQ$, we can define a $\lambda$-adic local system $\VV_{\mu}$ exactly the same way as in \eqref{loc-sys} for each $\mu=(a, b; c)\in \rmX^{\ast}(\rmT)$ on $\rmX_{\rmU}(\rmB)$.

\subsection{Integral model of the quternionic unitary Shimura variety}
Let $\rmU^{pq}\subset \GSp_{4}(\mathbb{A}^{pq}_{f})$ be a sufficiently small subgroup. Let $\mathcal{L}_{p}=\mathcal{O}_{\rmB_{p}}\oplus\mathcal{O}_{\rmB_{p}}$ and let $\mathcal{L}_{q}=\mathcal{O}_{\rmB_{q}}\oplus\mathcal{O}_{\rmB_{q}}$. Then the stabilizers of these lattices in $\rmG(\QQ_{p})$ and $\rmG(\QQ_{q})$ are the paramodular subgroups denoted by $\Pa_{\rmD}(p)$ and $\Pa_{\rmD^{\prime}}(q)$ respectively in \S \ref{inner_form}. Then the tuple
\begin{equation}
(\rmB, \mathcal{O}_{\rmB}, \rmV, (\cdot, \cdot), \mu, \mathcal{L}_{p}, \rmU^{p}=\Pa_{\rmD^{\prime}}(q)\rmU^{pq})
\end{equation}
gives a PEL-data over $\ZZ_{p}$ and the tuple
\begin{equation}
(\rmB, \mathcal{O}_{\rmB}, \rmV, (\cdot, \cdot), \mu, \mathcal{L}_{q}, \rmU^{q}=\Pa_{\rmD}(p)\rmU^{pq})
\end{equation}
gives a PEL-data over $\ZZ_{q}$. They define a PEL-type moduli problem $\rmX(\rmB)$ over $\ZZ_{p}$ and over $\ZZ_{q}$. As the role of $p$ and $q$ are completely symmetric in this section, we will only state the moduli problem $\rmX(\rmB)$ over $\ZZ_{p}$. For each $\ZZ_{p}$ scheme $S$, an $S$-valued point of the functor $\rmX(\rmB)$ is given by the following data up to isomorphism:
\begin{itemize}
\item $\sfA$ is an abelian scheme over $S$ of dimension $4$;
\item $\iota: \mathcal{O}_{\rmB}\rightarrow \End_{S}(\sfA)$ is a ring homomorphism;
\item $\lambda: \sfA\rightarrow \sfA^{\vee}$ is a prime to $p$ polarization;
\item $\eta^{p}: \mathrm{H}_{1}(\sfA, \mathbb{A}^{(p\infty)})\cong \rmV\otimes \mathbb{A}^{(p\infty)} \mod \rmU^{p}$ is a $\rmU^{p}$-level structure that respects the bilinear forms on both sides up to a constant in $(\mathbb{A}^{(p\infty)})^{\times}$.
\end{itemize}
The quadruple $(\sfA, \iota, \lambda, \eta)$ is required to satisfy the following additional conditions:
\begin{itemize}
\item The Kottwitz condition: 
\begin{equation*}
\det(\rmT-\iota(a); \Lie(\sfA))=(\rmT^{2}-\Trd(a)\rmT+\Nrd(a))^{2}
\end{equation*} 
for all $a\in \calO_{\rmB}$;
\item The polarization $\lambda$ and $\iota$ are compatible in the sense 
\begin{equation*}
\lambda\circ\iota(a)\circ \lambda^{-1}=\iota(a^{*})^{\vee}.
\end{equation*}
\end{itemize}
This functor is representable by a quasi-projective scheme $\rmX(\rmB)$ over $\ZZ_{p}$ of relative dimension $3$. The special fiber of $\rmX(\rmB)$ will be denoted by $\overline{\rmX}(\rmB)$ and the same notation will be used for its base change to $\overline{\FF}_{p}$. We remark that $\rmG(\rmB)$ has semi-simple $\QQ$-rank one and therefore $\rmX(\rmB)$ is not proper. The boundary of the minimal compactification of  $\rmX(\rmB)$ consists of cusps. The scheme $\rmX(\rmB)$ is regular but not smooth over $\Spec(\ZZ_{p})$. Since the group $\rmG(\rmB)$ can be identified with $\GSp_{4}$ over $\ZZ_{p^{2}}$ and the parahoric subgroup $\Pa_{\rmD}(p)$ can be identified with $\Pa(p)$ over the same extension, the local model for $\rmX(\rmB)$ agrees with that of $\rmX_{\Pa}(p)$ after the base change to $\ZZ_{p^{2}}$. Therefore $\rmX(\rmB)$ also has isolated singularities which are ordinary and quadratic. The set of singular points $\overline{\rmX}^{\sing}(\rmB)$ is concentrated in the special fiber. In fact, we have the following lemma. 
\begin{lemma}\label{sing-quat}
The singular locus $\overline{\rmX}^{\sing}(\rmB)$ consists of those $(\sfA, \iota, \lambda, \eta^{p})\in \overline{\rmX}(\rmB)$ such that $\sfA$ is superspecial.
\end{lemma}
\begin{proof}
This follows from \cite[Lemma 2.1]{Wanga} and the discussion on local model following it. 
\end{proof}

\subsection{Supersingular locus of the quaternionic unitary Shimura variety}
Next we recall the description of the supersingular locus of $\overline{\rmX}(\rmB)$ using the Bruhat--Tits stratification in \cite{Wanga}. The description of the supersingular locus is also obtained by Oki \cite{Oki} independently using a different method. Let $\overline{\rmX}^{\ss}(\rmB)$ be the supersingular locus considered as a reduced closed subscheme of $\overline{\rmX}(\rmB)$. We fix a $p$-divisible group $\XX$ over $\overline{\FF}_{p}$ with an action $\iota: \calO_{\rmD}\rightarrow \End(\XX)$ and a polarization $\lambda: \XX\rightarrow \XX^{\vee}$. We assume that $\XX$ is isoclinic of slope $\frac{1}{2}$ and we denote by $\mathsf{N}$ its associated isocrystal. Let $b\in \rmB(\rmG(\rmB))$ be the element associated to $\mathsf{N}$ in the Kottwitz set $\rmB(\rmG(\rmB))$. Then the group $\rmJ_{b}$ defined as in \eqref{Jb} can be identified with the split $\GSp_{4}$ over $\QQ_{p}$ in this case. In fact, let $\rmC=\mathsf{N}^{\tau=1}$ with $\tau=\Pi^{-1}\rmF$ equipped with the restriction of the alternating form $(\cdot, \cdot)$ to $\rmC$. Then we have $\rmJ_{b}=\GSp_{4}(\rmC)$.

We consider the Rapoport--Zink space $\calM(\rmB)$ for the group $\rmG(\rmB)$. This is the set valued functor on $(\Nilp)$ which assigns each $S\in (\Nilp)$ the set of isomorphism classes of quadruples $(\sfX, \iota_{\sfX}, \lambda_{\sfX}, \rho_{\sfX})$ where:
\begin{itemize}
\item $\sfX$ is a $4$-dimensional $p$-divisible group over $S$;
\item $\iota_{\sfX}: \calO_{\rmD}\rightarrow \End_{S}(\sfX)$ is an action of $\calO_{\rmD}$ on $\sfX$;
\item $\lambda_{\sfX}: \sfX\rightarrow \sfX^{\vee}$ is a principal polarization;
\item $\rho_{\sfX}:  \mathbb{X}\times_{\overline{\FF}_{p}}\overline{S} \rightarrow \sfX\times_{S}\overline{S}$ is a quasi-isogeny where $\overline{S}$ is the special fiber of $S$ at $p$. 
\end{itemize}
This functor $\calM(\rmB)$ is representable by a formal scheme locally formally of finite type over $\Spf(\rmW_{0})$. It carries an action of $\rmJ_{b}(\QQ_{p})\cong \GSp_{4}(\QQ_{p})$.  Again $\calM(\rmB)$ can be decomposed into connected components 
\begin{equation}
\calM(\rmB)=\bigsqcup_{i\in\ZZ}\calM^{(i)}(\rmB)
\end{equation}
where $\calM^{(i)}(\rmB)$ classifies those $(\sfX, \iota_{\sfX}, \lambda_{\sfX}, \rho_{\sfX})$ such that $\rho_{\sfX}$ has height $i$. We denote by $\rmM(\rmB)$  the reduced subscheme of the formal scheme $\calM^{(0)}(\rmB)$. The scheme $\rmM(\rmB)$ can be described by the Bruhat--Tits stratification. A \emph{vertex lattice} $L$ in the symplectic space $\rmC$ is by definition a $\ZZ_{p}$-lattice $L\subset \rmC$ such that 
$pL^{\vee}\subset L\subset L^{\vee}$
where $L^{\vee}$ is the integral dual of $L$ in $\rmC$. Let $\calL_{\{i\}}$ be the set of vertex lattices of type $i\in\{0, 1, 2\}$, they are by definition given by
\begin{itemize}
\item[] $\calL_{\{0\}}=\{\text{vertex lattice $L_{0}$ of type $0$ in $\rmC$: } pL^{\vee}_{0}\subset^{4} L_{0}\subset^{0} L^{\vee}_{0}\}$;
\item[] $\calL_{\{2\}}=\{\text{vertex lattice $L_{2}$ of type $2$ in $\rmC$: }  pL^{\vee}_{2}\subset^{0} L_{2}\subset^{4} L^{\vee}_{2}\}$;
\item[] $\calL_{\{1\}}=\{\text{vertex lattice $L_{1}$ of type $1$ in $\rmC$: }  pL^{\vee}_{1}\subset^{2} L_{1}\subset^{2} L^{\vee}_{1}\}$.
\end{itemize}

Each vertex lattice gives rise to a maximal parahoric subgroup in $\GSp_{4}(\QQ_{p})$ by taking the stabilizer of the lattice. 
\begin{itemize}
\item A vertex lattice $L_{0}$ of type $0$ corresponds to a hyperspecial subgroup $\rmK_{\{0\}}$;
\item A vertex lattice $L_{2}$ of type $2$ corresponds to a hyperspecial subgroup $\rmK_{\{2\}}$. It is conjugate to $\rmK_{\{0\}}$. When we need to identify them, we write them commonly as $\rmH$;
\item A vertex lattices $L_{1}$ of type $1$ corresponds to the paramodular subgroup $\rmK_{\{1\}}=\Pa(p)$; 
\item A pair $(L_{0}, L_{2})$ of vertex lattices of type $0$ and type $2$ corresponds to the Siegel parahoric subgroup $\mathrm{Sie}(p)$; 
\item A pair $(L_{0}, L_{1})$ of vertex lattices of type $0$ and $1$ corresponds to the Klingen parahoric subgroup $\Kl(p)$.
\end{itemize}
Each vertex lattice gives rise to a \emph{lattice stratum} which is a projective subscheme of $\rmM(\rmB)$, see \cite[Theorem 5.1]{Wanga}.
\begin{itemize}
\item For a vertex lattice $L_{0}$ of type $0$, the scheme $\rmM(\rmB)\{L_{0}\}$ is a projective surface of the form 
\begin{equation*}
\rmZ^{p}_{3}\rmZ_{0}-\rmZ^{p}_{0}\rmZ_{3}+\rmZ^{p}_{2}\rmZ_{1}-\rmZ^{p}_{1}\rmZ_{2}=0
\end{equation*}
for a projective coordinate $[\rmZ_{0}: \rmZ_{1}: \rmZ_{2}: \rmZ_{3}]$ of $\PP^{3}$;

\item For a vertex lattice $L_{2}$ of type $2$, the scheme $\rmM(\rmB)\{L_{2}\}$ is a projective surface of the form 
\begin{equation*}
\rmZ^{p}_{3}\rmZ_{0}-\rmZ^{p}_{0}\rmZ_{3}+\rmZ^{p}_{2}\rmZ_{1}-\rmZ^{p}_{1}\rmZ_{2}=0
\end{equation*}
for a projective coordinate $[\rmZ_{0}:\rmZ_{1}:\rmZ_{2}:\rmZ_{3}]$ of $\PP^{3}$;

\item For a vertex lattice $L_{1}$ of type $1$, the scheme $\rmM(\rmB)\{L_{1}\}$ consists of a superspecial point. 
\end{itemize}

Therefore there are two types of irreducible components of $\rmM(\rmB)$. One of them is given by lattice strata $\rmM(\rmB)\{L_{0}\}$ of type $0$ and the other is given by lattice strata $\rmM(\rmB)\{L_{2}\}$ of type $2$. 

The Rapoport--Zink uniformization theorem \cite[Theorem 6.30]{RZ96} in this case gives rise to an isomorphism
\begin{equation}\label{RZ-unif-quat}
\rmX^{\ss}(\rmB)\cong \rmI(\QQ)\backslash \calM(\rmB)_{\mathrm{red}}\times \GU_{2}(\rmB)(\mathbb{A}^{(\infty p)})/\rmU^{p} .
\end{equation}
Here $\rmI(\QQ)\cong \GU_{2}(\overline{\rmB})(\QQ)$ and $\overline{\rmB}=\rmB_{q\infty}$ is the quaternion algebra over $\QQ$ which ramifies at $q$ and infinity. 

We define the {quaternionic unitary Shimura set} for the group $\rmG(\overline{\rmB})=\GU_{2}(\overline{\rmB})$ by the double coset space
\begin{equation*}
\rmZ_{\rmU}(\overline{\rmB})=\rmG(\overline{\rmB})(\QQ)\backslash \rmG(\overline{\rmB})(\mathbb{A}^{(\infty)})/\rmU
\end{equation*}
where $\rmU$ is an open compact subgroup of $\rmG(\overline{\rmB})(\mathbb{A}^{(\infty)})$.  These Shimura sets appear naturally when we discuss the structure of the supersingular locus of $\overline{\rmX}(\rmB)$. In this case, we will use another set of notations.

\begin{itemize}
\item  When $\rmU=\rmK_{\{i\}}\rmU^{p}$ for $i\in\{0,1,2\}$, we will denote the Shimura set $\rmZ_{\rmU}(\overline{\rmB})$ by $\rmZ_{\{i\}}(\overline{\rmB})$ for this level structure;
\item  We have an isomorphism $\rmZ_{\{0\}}(\overline{\rmB})\cong\rmZ_{\{2\}}(\overline{\rmB})$ and we will sometimes denote them commonly by $\rmZ_{\rmU}(\overline{\rmB})$;
\item Since $\rmK_{\{1\}}$ is also denoted by $\Pa(p)$, we will sometimes denote the Shimura set $\rmZ_{\{1\}}(\overline{\rmB})$ by $\rmZ_{\mathrm{\Pa}}(\overline{\rmB})$. 
\end{itemize}
The same construction \S $3.5$ defines an \'etale local system $\VV_{\mu}$ on the Shimura set $\rmZ_{\rmU}(\overline{\rmB})$ for each character $\mu\in\rmX^{\ast}(\rmT)$. The space of global sections $\rmH^{0}(\rmZ_{\rmU}(\overline{\rmB}), \VV_{\mu})$ will be denoted as $\calC(\rmZ_{\rmU}(\overline{\rmB}), \VV_{\mu})$. 

\begin{proposition}\label{B-double}
We have the following description of the supersingular  locus $\overline{\rmX}^{\ss}(\rmB)$ and the singular locus $\overline{\rmX}^{\sin}(\rmB)$ of $\overline{\rmX}(\rmB)$.
\begin{enumerate}
\item The irreducible components of $\overline{\rmX}^{\ss}(\rmB)$ can be parametrized by the union of the two Shimura sets $\rmZ_{\{0\}}(\overline{\rmB})$ and $\rmZ_{\{2\}}(\overline{\rmB})$.
\item The singular locus $\Sigma_{p}(\rmB)$ can be parametrized by the Shimura set $\rmZ_{\{1\}}(\overline{\rmB})$.
\end{enumerate}
\end{proposition}
\begin{proof}
Recall there are two kinds of irreducible components of $\rmM(\rmB)$ and they correspond to vertex lattices of type $0$ and vertex lattice of type $2$. Therefore by \eqref{RZ-unif-quat} the irreducible components of type $0$ can be parametrized by 
\begin{equation}
\rmI(\QQ)\backslash \rmJ_{b}(\QQ_{p})/{\rmK_{\{0\}}}\times \rmG(\rmB)(\mathbb{A}^{(\infty p)})/\rmU^{p}.
\end{equation}
One then sees that this is exactly 
\begin{equation*}
\rmZ_{\{0\}}(\overline{\rmB})=\rmG(\overline{\rmB})(\QQ)\backslash \rmG(\overline{\rmB})(\mathbb{A}^{(\infty)})/\rmK_{\{0\}}\rmU^{p}. 
\end{equation*}
Similarly, the irreducible components of type $2$ can be parametrized by 
\begin{equation}
\rmI(\QQ)\backslash \rmJ_{b}(\QQ_{p})/{\rmK_{\{2\}}}\times \rmG(\rmB)(\mathbb{A}^{(\infty p)})/\rmU^{p}.
\end{equation}
One then sees that this is exactly 
\begin{equation*}
\rmZ_{\{2\}}(\overline{\rmB})=\rmG(\overline{\rmB})(\QQ)\backslash \rmG(\overline{\rmB})(\mathbb{A}^{(\infty)})/\rmK_{\{2\}}\rmU^{p}.
\end{equation*}

By Lemma \ref{sing-quat}, the singular locus is exactly the superspecial locus which correspond to vertex lattices of type $1$. Therefore by  \eqref{RZ-unif-quat}, it is parametrized by the double coset
\begin{equation}
\rmI(\QQ)\backslash \rmJ_{b}(\QQ_{p})/{\Pa(p)}\times \rmG(\rmB)(\mathbb{A}^{(\infty p)})/\rmU^{p}.
\end{equation}
One then sees that this is exactly 
\begin{equation*}
\rmZ_{\{1\}}(\overline{\rmB})=\rmG(\overline{\rmB})(\QQ)\backslash \rmG(\overline{\rmB})(\mathbb{A}^{(\infty)})/\Pa(p)\rmU^{p}.
\end{equation*}
\end{proof}

Recall our notations for the quaternion algebras: $\rmB=\rmB_{pq}$, $\overline{\rmB}=\rmB_{q\infty}$ and $\rmB^{\prime}=\rmB_{p\infty}$. The following observation will be important in the subsequent discussions.
\begin{corollary}\label{pq-sing}
We have the following identifications of the singular loci of the Shimura varieties $\rmX_{\Pa}(pq)$ and $\rmX(\rmB)$ at the primes $p$ and $q$.
\begin{enumerate}
\item The singular locus $\Sigma_{p}(\Pa(pq))$ of $\overline{\rmX}_{\Pa}(pq)_{\overline{\FF}_{p}}$ is parametrized by the same set as the singular locus $\Sigma_{q}(\rmB)$ of $\overline{\rmX}(\rmB)_{\overline{\FF}_{q}}$. More precisely, we have
\begin{equation*}
\Sigma_{p}(\Pa(pq))=\rmZ_{\Pa}(\overline{\rmB})=\Sigma_{q}(\rmB).
\end{equation*}
\item The singular locus $\Sigma_{q}(\Pa(pq))$ of $\overline{\rmX}_{\Pa}(pq)_{\overline{\FF}_{q}}$ is parametrized by the same set as the singular locus $\Sigma_{p}(\rmB)$ of $\overline{\rmX}(\rmB)_{\overline{\FF}_{p}}$. More precisely, we have
\begin{equation*}
\Sigma_{q}(\Pa(pq))=\rmZ_{\Pa}(\rmB^{\prime})=\Sigma_{p}(\rmB).
\end{equation*}
\end{enumerate}
\end{corollary}

\begin{proof}
This follows easily from Proposition \ref{Siegel-double} and Proposition \ref{B-double}. 
\end{proof}

\section{Ribet's principle for paramodular Siegel threefold}
\subsection{The Ribet's principle} Now we come to state the second main theorem of this article which we call the Ribet's principle for the paramodular Siegel threefold.  We will make some similar assumptions as in the Mazur's principle concerning the residual Galois representation $\overline{\rho}_{\pi, \lambda}$. We fix a $\mu\in \rmX^{\ast}(\rmT)$ and denote $\VV_{\mu}$ simply by $\VV$. 

\begin{assumption}\label{B-assumptions}
We will make the following assumptions throughout this section.
\begin{enumerate}
\item The residual Galois representation $\overline{\rho}_{\pi, \lambda}$ is absolutely irreducible and the image of $\overline{\rho}_{\pi,\lambda}(\rmG_{\QQ})$ contains $\GSp_{4}(\FF_{l})$ .

\item Let $\frakm$ be the maximal ideal associated to $\overline{\rho}_{\pi, \lambda}$ as explained in \eqref{maximal-ideal}, we assume that
\begin{equation*}
\mathrm{H}^{i}_{\rmc}(\rmX_{\Pa}(pq)\otimes\overline{\QQ}, \mathbb{V})_{\frakm}
\end{equation*}
and
\begin{equation*}
\mathrm{H}^{i}_{\rmc}(\rmX(\rmB)\otimes\overline{\QQ}, \mathbb{V})_{\frakm}
\end{equation*}
vanishes unless $i=3$.

\item The cohomology group
$\mathrm{H}^{3}_{\rmc}(\rmX_{\Pa}(pq)\otimes\overline{\QQ}, \mathbb{V})_{\mathfrak{m}}\otimes k$
is semisimple as a Galois module. This implies that we have an identification of Galois modules
\begin{equation*}
\mathrm{H}^{3}_{\rmc}(\rmX_{\Pa}(pq)\otimes\overline{\QQ}, \mathbb{V})_{\mathfrak{m}}\otimes k=\overline{\rho}^{\oplus s}_{\pi,\lambda}
\end{equation*}
for some positive number $s$. 
\end{enumerate}
\end{assumption}

We introduce the following notion of generic Hecke parameters at $p$. This will be used when we apply some geometric results in \cite{Wang}. 

\begin{definition}\label{generic-at-p}
Let $\frakm$ be the maximal ideal corresponding to $\overline{\rho}_{\pi,\lambda}$. Then we say $\frakm$ is generic at $p$ if the Hecke parameters $[\alpha_{p}, \beta_{p}, p^{3}\beta^{-1}, p^{3}\alpha^{-1}_{p}]$ of $\pi_{p}$ satisfy 
\begin{equation*}
\alpha_{p}+\beta_{p}+p^{3}\alpha^{-1}_{p}+p^{3}\beta^{-1}_{p}\pm 2p(p+1) \not\equiv 0\mod \lambda.
\end{equation*}
If this is the case, we also say $\overline{\rho}_{\pi,\lambda}\vert_{\rmG_{\QQ_{p}}}$ is generic. 
\end{definition}

 \begin{theorem}\label{Rib-principle}
Let $\pi$ be a cuspidal automorphic representation of $\GSp_{4}(\mathbb{A})$ which is of general type with weight $(k_{1}, k_{2})$ satisfying $k_{1}\geq k_{2}\geq 3$ and $k_{1}\equiv k_{2}\mod 2$. Let $p, q$ be two distinct primes different from $l$. Let $\rmU=\Pa(p)\Pa(q)\rmU^{(pq)}$ with $\rmU^{(pq)}\subset \GSp_{4}(\mathbb{A}^{\infty pq})$ such that $\rmU$ is a neat open compact subgroup such that $\pi^{\rmU}\neq 0$. Suppose that $\pi_{p}$ and $\pi_{q}$ are both ramified of type $\mathrm{IIa}$ in the table of Schmidt. Suppose also that the residual Galois representation $\overline{\rho}_{\pi, \lambda}$ satisfies the following assumptions:
\begin{enumerate}
\item $\overline{\rho}_{\pi, \lambda}$ satisfies Assumption \ref{B-assumptions};
\item $\overline{\rho}_{\pi, \lambda}\vert_{\rmG_{\QQ_{p}}}$ is unramified and generic; 
\item $\overline{\rho}_{\pi, \lambda}\vert_{\rmG_{\QQ_{q}}}$ is ramified.
\end{enumerate}
Then there exists a cuspidal automorphic representation $\breve{\pi}$of general type with the same weight as $\pi$ such that 
\begin{equation*}
\overline{\rho}_{\pi, \lambda}\cong \overline{\rho}_{\breve{\pi}, \lambda} 
\end{equation*}
and such that $\breve{\pi}_{p}$ is an unramified principal series.
\end{theorem} 

\subsection{Tate cycles from the supersingular locus } In this subsection, we review a key ingredient in \cite{Wang} that the cohomology group $\rmH^{2}_{\rmc}(\overline{\rmX}(\rmB)\otimes \overline{\FF}_{p}, \VV(1))_{\frakm}$  is generated by Tate cycles coming from the supersingular locus.  We first review some notations that leads to the definition of the supersingular matrix as in \cite{Wang}. Recall that the irreducible components of the supersingular locus of $\overline{\rmX}(\rmB)_{\overline{\FF}_{p}}$ are parametrized by 
the Shimura sets $\rmZ_{\{0\}}(\overline{\rmB})$ and $\rmZ_{\{2\}}(\overline{\rmB})$ as in Proposition \ref{B-double}. Therefore we have two natural Gysin morphisms
\begin{equation}\label{Tate-inj}
\begin{aligned}
&\mathrm{inc}^{\{0\}}_{!}: \calC(\rmZ_{\{0\}}(\overline{\rmB}),\VV)\cong\rmH^{0}(\overline{\rmX}^{\mathrm{ss}}_{\{0\},\overline{\FF}_{p}}(\rmB), \VV)\rightarrow  \rmH^{2}_{\rmc}(\overline{\rmX}(\rmB)\otimes{\overline{\FF}_{p}}, \VV(1))\\
&\mathrm{inc}^{\{2\}}_{!}: \calC(\rmZ_{\{2\}}(\overline{\rmB}), \VV)\cong \rmH^{0}(\overline{\rmX}^{\mathrm{ss}}_{\{2\}, \overline{\FF}_{p}}(\rmB), \VV) \rightarrow  \rmH^{2}_{\rmc}(\overline{\rmX}(\rmB)\otimes{\overline{\FF}_{p}}, \VV(1))\\
\end{aligned}
\end{equation}
and the two natural restriction morphisms
\begin{equation}\label{Tate-surj}
\begin{aligned}
&\mathrm{inc}^{\ast}_{\{0\}}: \rmH^{4}(\overline{\rmX}(\rmB)\otimes \overline{\FF}_{p}, \VV(2))\rightarrow  \rmH^{4}(\overline{\rmX}^{\mathrm{ss}}_{\{0\},\overline{\FF}_{p}}(\rmB), \VV(2)) \cong \calC(\rmZ_{\{0\}}(\overline{\rmB}), \VV)\\
&\mathrm{inc}^{\ast}_{\{2\}}: \rmH^{4}(\overline{\rmX}(\rmB)\otimes \overline{\FF}_{p}, \VV(2))\rightarrow  \rmH^{4}(\overline{\rmX}^{\mathrm{ss}}_{\{2\},\overline{\FF}_{p}}(\rmB), \VV(2))\cong \calC(\rmZ_{\{2\}}(\overline{\rmB}), \VV).\\
\end{aligned}
\end{equation}

We have a natural restriction map
\begin{equation*}
\mathrm{inc}^{\ast}_{\{\mathrm{ss}\}}: \rmH^{2}_{\rmc}(\overline{\rmX}(\rmB)\otimes\overline{\FF}_{p}, \VV(1))\rightarrow \rmH^{2}(\overline{\rmX}^{\mathrm{ss}}(\rmB)\otimes\overline{\FF}_{p}, \VV(1)).
\end{equation*}

We also have the natural Gysin map
\begin{equation*}
\mathrm{inc}^{\{\mathrm{ss}\}}_{!}: \rmH^{2}(\overline{\rmX}^{\mathrm{ss}}(\rmB)\otimes\overline{\FF}_{p}, \VV(1))\rightarrow \rmH^{4}(\overline{\rmX}_{\Pa}(\rmB)\otimes\overline{\FF}_{p}, \VV(2)).
\end{equation*}

We can fit all the maps in the above discussions in the diagram below
\begin{equation*}
\begin{tikzcd}
 \calC(\rmZ_{\{0\}}(\overline{\rmB}), \VV) \arrow[rd, "\mathrm{inc}^{\{0\}}_{!}"'] &                                    &  \calC(\rmZ_{\{2\}}(\overline{\rmB}), \VV)\arrow[ld, "\mathrm{inc}^{\{2\}}_{!}"] \\
                   & \rmH^{2}_{\rmc}(\overline{\rmX}(\rmB)\otimes\overline{\FF}_{p}, \VV(1)) \arrow[d, "\mathrm{inc}^{\ast}_{\{\mathrm{ss}\}}"]                   &                   \\
                   & \rmH^{2}(\overline{\rmX}^{\mathrm{ss}}(\rmB)\otimes\overline{\FF}_{p}, \VV(1)) \arrow[d, "\mathrm{inc}^{\{\mathrm{ss}\}}_{!}"]                   &                   \\
                   &  \rmH^{4}(\overline{\rmX}(\rmB)\otimes\overline{\FF}_{p}, \VV(2)) \arrow[ld, "\mathrm{inc}^{\ast}_{\{0\}}"'] \arrow[rd, "\mathrm{inc}^{\ast}_{\{2\}}"] &                   \\
 \calC(\rmZ_{\{0\}}(\overline{\rmB}), \VV)          &                                    &   \calC(\rmZ_{\{2\}}(\overline{\rmB}), \VV)          
\end{tikzcd}
\end{equation*}
and obtain from the above diagram the following four maps
\begin{equation*}
\begin{aligned}
&\calT_{\mathrm{ss},\{00\}}= \mathrm{inc}^{\ast}_{\{0\}}\circ \mathrm{inc}^{\{\mathrm{ss}\}}_{!}\circ \mathrm{inc}^{\ast}_{\{\mathrm{ss}\}}\circ\mathrm{inc}^{\{0\}}_{!}\\
&\calT_{\mathrm{ss},\{02\}}= \mathrm{inc}^{\ast}_{\{2\}}\circ  \mathrm{inc}^{\{\mathrm{ss}\}}_{!}\circ \mathrm{inc}^{\ast}_{\{\mathrm{ss}\}}\circ\circ\mathrm{inc}^{\{0\}}_{!}\\
&\calT_{\mathrm{ss},\{20\}}= \mathrm{inc}^{\ast}_{\{0\}}\circ \mathrm{inc}^{\{\mathrm{ss}\}}_{!}\circ \mathrm{inc}^{\ast}_{\{\mathrm{ss}\}}\circ\mathrm{inc}^{\{2\}}_{!}\\
&\calT_{\mathrm{ss},\{22\}}= \mathrm{inc}^{\ast}_{\{2\}} \circ \mathrm{inc}^{\{\mathrm{ss}\}}_{!}\circ \mathrm{inc}^{\ast}_{\{\mathrm{ss}\}}\circ\circ\mathrm{inc}^{\{2\}}_{!}.\\
\end{aligned}
\end{equation*}
We will call the resulting matrix
\begin{equation*}
\calT_{\mathrm{ss}}=\begin{pmatrix} &\calT_{\mathrm{ss},\{00\}} & \calT_{\mathrm{ss}, \{02\}}\\ &\calT_{\mathrm{ss}, \{20\}}& \calT_{\mathrm{ss}, \{22\}}\\ \end{pmatrix}
\end{equation*}
the supersingular matrix. If we need to consider this matrix modulo $\frakm$, then we will denote it by $\calT_{\mathrm{ss}/\frakm}$. The entries of this matrix will be denoted without referring to $\frakm$.

\begin{proposition}\label{ss-matrix}
Suppose that $[\alpha_{p}, \beta_{p}, p^{3}\beta^{-1}, p^{3}\alpha^{-1}_{p}]$ is the Hecke parameter of $\pi$ at $p$. Then determinant of the supersingular matrix modulo $\frakm$ is given by
\begin{equation*}
\det\phantom{.}\calT_{\mathrm{ss}/\frakm}= \prod_{\rmu=\pm1}(\alpha_{p}+\beta_{p}+p^{3}\alpha^{-1}_{p}+p^{3}\beta^{-1}_{p}- 2\rmu p(p+1))^{2}.
\end{equation*}
\end{proposition}

\begin{proof}
This follows from \cite[Proposition 7.5]{Wang} with trivial modification to handle the higher weight. Note that in \cite{Wang}, we impose the condition $l\nmid p^{2}-1$ throughout, however this assumption is not needed in the proof here.
\end{proof}

\begin{theorem}\label{Tate}
Let $\frakm$ be a generic maximal ideal as in the above definition, we have the following statements.
\begin{enumerate}
\item The Gysin maps
\begin{equation*}
(\mathrm{inc}^{\{0\}}_{!, \frakm}+\mathrm{inc}^{\{2\}}_{!, \frakm}):\calC(\rmZ_{\{0\}}(\overline{\rmB}), \VV)_{\frakm}\oplus\calC(\rmZ_{\{2\}}(\overline{\rmB}), \VV)_{\frakm} \rightarrow  \rmH^{2}_{\rmc}(\overline{\rmX}_{\Pa}(\rmB)\otimes\overline{\FF}_{p}, \VV(1))_{\frakm}
\end{equation*}
is an isomorphism.
\item The restriction maps
\begin{equation*}
(\mathrm{inc}^{\ast}_{\{0\}, \frakm}+\mathrm{inc}^{\ast}_{\{2\}, \frakm}): \rmH^{4}(\overline{\rmX}_{\Pa}(\rmB)\otimes\overline{\FF}_{p}, \VV(2))_{\frakm}\rightarrow\calC(\rmZ_{\{0\}}(\overline{\rmB}), \VV)_{\frakm}\oplus\calC(\rmZ_{\{2\}}(\overline{\rmB}), \VV)_{\frakm}
\end{equation*}
is an isomorphism up to the torsion in  $\rmH^{4}(\overline{\rmX}_{\Pa}(\rmB)\otimes\overline{\FF}_{p}, \VV(2))_{\frakm}$.
\end{enumerate}
\end{theorem}
\begin{proof}
The injectivity of $(\mathrm{inc}^{\{0\}}_{!, \frakm}+\mathrm{inc}^{\{2\}}_{!, \frakm})$ follows from Proposition \ref{ss-matrix} and Nakayama's lemma under our generic assumption. The rest of the argument is the same as that of the first part of \cite[Proposition 7.9]{Wang} comparing the dimensions of newforms and oldforms with the dimensions of the target and the source of $(\mathrm{inc}^{\{0\}}_{!, \frakm}+\mathrm{inc}^{\{2\}}_{!, \frakm})$. Note again that $p\equiv 1\mod l$ is allowed in this argument but that the Hecke parameters being generic is really needed.
\end{proof}

\subsection{Proof of Ribet's principle} Let $\frakm$ be the maximal ideal corresponding to $\overline{\rho}_{\pi, \lambda}$ as in \eqref{maximal-ideal}. Then by our semisimple assumption, we have 
\begin{equation}\label{pq-semi-a}
\rmH^{3}_{\rmc}(\rmX_{\Pa}(pq)\otimes\overline{\QQ}, \VV)_{\frakm}\otimes k=\overline{\rho}^{\oplus s}_{\pi, \lambda}
\end{equation}
for some positive integer $s$. By the global Jacquet--Langlands correspondence for $\GSp_{4}$ \cite{RW19}, we have a non-canonical injection
\begin{equation*}
\rmH^{3}_{\rmc}(\rmX(\rmB)\otimes{\overline{\QQ}}, \VV_{E_{\lambda}})\hooklongrightarrow \rmH^{3}_{\rmc}(\rmX_{\Pa}(pq)\otimes \overline{\QQ}, \VV_{E_{\lambda}}). 
\end{equation*}
The subspace $\rmH^{3}_{\rmc}(\rmX(\rmB)\otimes{\overline{\QQ}}, \VV_{E_{\lambda}})$ is preserved by the Hecke algebra $\mathbb{T}$, the Atkin--Lehner operators $\rmu_{p}, \rmu_{q}$ and the Galois action. By Assumption \ref{B-assumptions}, the cohomology groups $\rmH^{3}_{\rmc}(\rmX(\rmB)\otimes{\overline{\QQ}}, \VV)$ and  $\rmH^{3}_{\rmc}(\rmX_{\Pa}(pq)\otimes \overline{\QQ}, \VV)$ are both torsion free. Thus the above injection upgrades to
\begin{equation*}
\rmH^{3}_{\rmc}(\rmX(\rmB)\otimes{\overline{\QQ}}, \VV)\hooklongrightarrow \rmH^{3}_{\rmc}(\rmX_{\Pa}(pq)\otimes \overline{\QQ}, \VV). 
\end{equation*}

Therefore the semisimple assumption of $\rmH^{3}_{\rmc}(\rmX_{\Pa}(pq)\otimes \overline{\QQ}, \VV)\otimes k$ also implies that of $\rmH^{3}_{\rmc}(\rmX(\rmB)\otimes\overline{\QQ}, \VV)_{\frakm}\otimes k$ and thus 
\begin{equation}\label{multi-t-a}
\rmH^{3}_{\rmc}(\rmX(\rmB)\otimes\overline{\QQ}, \VV)_{\frakm}\otimes k=\overline{\rho}_{\pi, \lambda}^{\oplus t}
\end{equation}
for some non-negative integer $t$. Note that $t$ is positive in our case, since in the decomposition 
\begin{equation}\label{decomp-B-ribet}
\rmH^{3}_{\rmc}(\rmX(\rmB)\otimes{\overline{\QQ}}, \VV)_{\frakm}\otimes E_{\lambda}=\bigoplus_{\vec{\pi}}\rho^{m(\vec{\pi})}_{\vec{\pi}, \lambda}
\end{equation}
where $\vec{\pi}$ runs through all the representations that is congruent to $\pi$, there is at least one $\vec{\pi}$ such that $\vec{\pi}_{p}$ and $\vec{\pi}_{q}$ are both ramified. Our strategy is to prove the theorem by contradiction
and therefore we assume that all the representation $\vec{\pi}$ appear in the following equations
\begin{equation}\label{decomp-B-para}
\begin{split}
&\rmH^{3}_{\rmc}(\rmX(\rmB)\otimes\overline{\QQ}, \VV)_{\frakm}\otimes E_{\lambda}=\bigoplus_{\vec{\pi}}\rho^{n(\vec{\pi})}_{\vec{\pi}, \lambda}\\
&\rmH^{3}_{\rmc}(\rmX_{\Pa}(pq)\otimes\overline{\QQ}, \VV)_{\frakm}\otimes E_{\lambda}=\bigoplus_{\vec{\pi}}\rho^{m(\vec{\pi})}_{\vec{\pi}, \lambda}\\
\end{split}
\end{equation}
are \emph{ramified} at $p$.  As we have assumed that $\overline{\rho}_{\pi, \lambda}$ is ramified at $q$, all $\vec{\pi}_{q}$ is necessarily ramified. Therefore it follows that in particular $t$ in \eqref{multi-t-a} is positive.

\begin{proposition}
Suppose that all the $\vec{\pi}$ are ramified in \eqref{decomp-B-para}. The map $\mathrm{ca}$
\begin{equation*}
\mathrm{ca}: \mathrm{H}^{3}(\overline{\rmX}(\rmB)\otimes{\overline{\FF}_{p}}, \rmR\Psi(\mathbb{V}))_{\frakm}\otimes k \rightarrow  \bigoplus_{\sigma\in\Sigma_{p}(\rmB)} \rmR^{3}\Phi_{\sigma}(\mathbb{V})_{\frakm}\otimes k
\end{equation*}
localized at $\mathfrak{m}$ is surjective and by duality the map $\mathrm{coca}$
\begin{equation*}
\mathrm{coca}: \bigoplus_{\sigma\in\Sigma_{p}(\rmB)} \mathrm{H}^{3}_{\{\sigma\}}(\overline{\rmX}(\rmB)\otimes{\overline{\FF}_{p}}, \rmR\Psi(\mathbb{V}))_{\frakm} \otimes k \rightarrow \mathrm{H}^{3}_{\rmc}(\overline{\rmX}(\rmB)\otimes{\overline{\FF}_{p}}, \rmR\Psi(\mathbb{V}))_{\frakm}\otimes k
\end{equation*}
localized at $\mathfrak{m}$ is injective. 
\end{proposition}

\begin{proof}
Note that the kernel of
\begin{equation*}
\mathrm{coca}: \bigoplus_{\sigma\in\Sigma_{p}(\rmB)} \mathrm{H}^{3}_{\{\sigma\}}(\overline{\rmX}(\rmB)\otimes{\overline{\FF}_{p}}, \rmR\Psi(\mathbb{V}))_{\frakm}  \rightarrow \mathrm{H}^{3}_{\rmc}(\overline{\rmX}(\rmB)\otimes{\overline{\FF}_{p}}, \rmR\Psi(\mathbb{V}))_{\frakm},
\end{equation*}
is given by $\rmH^{2}_{\rmc}(\overline{\rmX}_{\Pa}(\rmB)\otimes \overline{\FF}_{p}, \VV)_{\frakm}$ and we have an isomorphism 
\begin{equation*}
 \rmH^{2}_{\rmc}(\overline{\rmX}_{\Pa}(\rmB)\otimes\overline{\FF}_{p}, \VV(1))_{\frakm}\cong \calC(\rmZ_{\rmH}(\overline{\rmB}), \VV)_{\frakm}\oplus\calC(\rmZ_{\rmH}(\overline{\rmB}), \VV)_{\frakm} 
\end{equation*}
by Theorem \ref{Tate}.
But the right hand side supports only unramified representations and therefore vanishes by our assumption in the statement of this proposition. Since 
\begin{equation*}
\bigoplus\limits_{\sigma\in\Sigma_{p}(\rmB)} \mathrm{H}^{3}_{\{\sigma\}}(\overline{\rmX}(\rmB)\otimes{\overline{\FF}_{p}}, \rmR\Psi(\mathbb{V}))_{\frakm}
\end{equation*}
and $\mathrm{H}^{3}_{\rmc}(\overline{\rmX}(\rmB)\otimes{\overline{\FF}_{p}}, \rmR\Psi(\mathbb{V}))_{\frakm}$ are both torsion free, 
\begin{equation*}
\mathrm{coca}: \bigoplus_{\sigma\in\Sigma_{p}(\rmB)} \mathrm{H}^{3}_{\{\sigma\}}(\overline{\rmX}(\rmB)\otimes{\overline{\FF}_{p}}, \rmR\Psi(\mathbb{V}))_{\frakm}\otimes k  \rightarrow \mathrm{H}^{3}_{\rmc}(\overline{\rmX}(\rmB)\otimes{\overline{\FF}_{p}}, \rmR\Psi(\mathbb{V}))_{\frakm}\otimes k
\end{equation*}
is also an injection. Similarly, the cokernel of 
\begin{equation*}
\mathrm{ca}: \mathrm{H}^{3}(\overline{\rmX}(\rmB)\otimes{\overline{\FF}_{p}}, \rmR\Psi(\mathbb{V}))_{\frakm}\otimes k \rightarrow  \bigoplus\limits_{\sigma\in\Sigma_{p}(\rmB)} \rmR^{3}\Phi_{\sigma}(\mathbb{V})_{\frakm}\otimes k 
\end{equation*}
is given by $\rmH^{4}(\overline{\rmX}_{\Pa}(\rmB), \VV_{k})_{\frakm}$.  Note that, we have 
\begin{equation*}
\begin{aligned}
&\rmH^{4}(\overline{\rmX}(\rmB)\otimes\overline{\FF}_{p}, \VV_{k})\xrightarrow{\sim} \rmH^{4}(\overline{\rmX}^{\mathrm{sm}}(\rmB)\otimes\overline{\FF}_{p}, \VV_{k})\\
&\rmH^{2}_{\rmc}(\overline{\rmX}^{\mathrm{sm}}(\rmB)\otimes\overline{\FF}_{p}, \VV_{k})\xrightarrow{\sim} \rmH^{2}_{\rmc}(\overline{\rmX}(\rmB)\otimes\overline{\FF}_{p}, \VV_{k})\\
\end{aligned}
\end{equation*}
by considering the excision long exact sequence. Then Poincar\'e duality for $\overline{\rmX}^{\mathrm{sm}}(\rmB)$ implies that $\rmH^{4}(\overline{\rmX}_{\Pa}(\rmB)\otimes\overline{\FF}_{p}, \VV_{k})_{\frakm}$ also vanishes.
\end{proof}

\begin{lemma}\label{cong-up}
Suppose that each $\vec{\pi}$ appearing in \eqref{decomp-B-ribet} is ramified at $p$. 
\begin{enumerate}
\item  The Frobenius action $\mathrm{Frob}_{p}$ on the Galois submodule 
\begin{equation*}
\bigoplus\limits_{\sigma\in\Sigma_{p}(\rmB)}\mathrm{H}^{3}_{\{\sigma\}}(\rmX(\rmB)\otimes{\overline{\FF}_{p}}, \rmR\Psi(\mathbb{V}))_{\frakm}
\end{equation*}
is given by $\rmu_{p}p$ where $\rmu_{p}$ is the Atkin--Lehner operator as in \eqref{AL}. 

\item The Frobenius action $\mathrm{Frob}_{p}$ on the Galois subquotient  
\begin{equation*}
\bigoplus_{\sigma\in\Sigma_{p}(\rmB)}\rmR^{3}\Phi_{\sigma}(\mathbb{V})_{\frakm}
\end{equation*}
is given by the Atkin--Lehner operator $\rmu_{p}$. 
\end{enumerate}
\end{lemma}
\begin{proof}
This follows by exactly the same proof for Lemma \ref{cong-up-pa}.
\end{proof}

\begin{proposition}\label{2t-degen}
Suppose that each representation $\vec{\pi}$ appearing in \eqref{decomp-B-ribet} is ramified at $p$, then we have
\begin{equation*}
\dim_{k} \bigoplus\limits_{\sigma\in\Sigma_{p}(\rmB)}\rmR^{3}\Phi_{\sigma}(\mathbb{V})_{\frakm}\otimes k=2t.
\end{equation*}
\end{proposition}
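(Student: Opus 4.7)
My plan is to run the same template as the concluding step of the proof of Mazur's principle (Theorem \ref{Mz-principle}), now applied to the Picard-Lefschetz diagram \eqref{picard-quat} for $X^B$ at the prime $p$. Three ingredients play the key role: (i) the surjectivity of $\alpha$ after localising at $\frakm$, just established in the preceding Proposition; (ii) the fact that $N_x$ is an isomorphism (ordinary quadratic singularities), which persists mod $l$ since $\bigoplus_x R^3\Phi(\VV)_{x,\frakm}$ is torsion-free; and (iii) the hypothesis that $\bar{\rho}_{\pi,l}|_{G_{\QQ_p}}$ is unramified, which forces the global monodromy $N \otimes k$ on $\rmH^3(X^B_{\bar{\QQ}_p}, \VV)_\frakm \otimes k$ to vanish.

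Combining (i), (ii), (iii) with the Picard-Lefschetz identity $N = \beta \circ N_x \circ \alpha$, I first conclude that $\beta \otimes k = 0$. Passing to the mod-$l$ version of the fundamental diagram \eqref{fun-diam-quat}, a snake lemma argument using the torsion-freeness of $\bigoplus_x R^3\Phi(\VV)_{x,\frakm}$ shows that $\gamma \otimes k$ remains injective, and combined with the surjectivity of $\alpha \otimes k$ I obtain a short exact sequence of Galois modules
\begin{equation*}
0 \longrightarrow \rmH^{3}_{c}(X^{B}_{\bar{\FF}_p}, \VV)_{\frakm}\otimes k \xrightarrow{\gamma \otimes k} \rmH^{3}_{!}(X^{B}_{\bar{\QQ}_p}, \VV)_{\frakm}\otimes k \xrightarrow{\alpha \otimes k} \bigoplus_{x\in\Sigma_p(B)} R^{3}\Phi(\VV)_{x,\frakm}\otimes k \longrightarrow 0.
\end{equation*}

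Using \eqref{multi-t} to identify $\rmH^{3}_{!}(X^{B}_{\bar{\QQ}_p}, \VV)_{\frakm}\otimes k \cong \bar{\rho}_{\pi,l}^{\oplus t}$, I then identify the rightmost term. The analogue of Proposition \ref{van-llc} for $X^B$ at $p$, combined with local-global compatibility (Theorem \ref{Gal-GSp4}) and the explicit Weil-Deligne data \eqref{weil-del}--\eqref{mono} of each type IIa $\rho_{\pi', l}$ appearing in \eqref{decomp-B-para}, identifies the quotient $\bigoplus_x R^{3}\Phi(\VV)_{x,\frakm}\otimes k$ with $t$ copies of the mod-$l$ reduction of the twisted Steinberg piece $|\cdot|^{-1/2}\chi\sigma \otimes \mathrm{Sp}_2$. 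Since $\bar{\rho}_{\pi,l}|_{G_{\QQ_p}}$ is unramified, this twisted Steinberg piece reduces modulo $l$ to a $2$-dimensional unramified subquotient of $\bar{\rho}_{\pi,l}|_{G_{\QQ_p}}$, contributing $2$ dimensions per copy. Summing over the $t$ copies yields $\dim_k \bigoplus_x R^{3}\Phi(\VV)_{x,\frakm}\otimes k = 2t$.

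The main obstacle is the final identification of the quotient with the mod-$l$ twisted Steinberg: although over $E$ each $\rho_{\pi', l}$ contributes only a $1$-dimensional piece to $R^3\Phi \otimes E$ (the image of monodromy), the integral vanishing cycles retain the full $2$-dimensional Steinberg block, and this becomes visible after reduction modulo $l$. Making this precise requires careful tracking of the monodromy filtration inside the mod-$l$ fundamental diagram, and is exactly the point where the hypothesis that $\bar{\rho}_{\pi,l}$ is unramified at $p$ is used essentially; the remaining steps of the argument are formal adaptations of the diagram chase already carried out in Section 5.
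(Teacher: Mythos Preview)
Your proposal is correct and follows essentially the same argument as the paper: reduce the fundamental diagram \eqref{fun-diam-quat} modulo $l$, use unramifiedness of $\bar{\rho}_{\pi,l}$ at $p$ to force $N\otimes k=0$, deduce via Picard--Lefschetz that $\beta'\otimes k=0$ while $\gamma\otimes k$ stays injective (snake lemma plus torsion-freeness of the vanishing cycles), and then read off that $\bigoplus_{x}R^{3}\Phi(\VV)_{x,\frakm}\otimes k$ is the two-dimensional twisted Steinberg quotient in each of the $t$ copies of $\bar{\rho}_{\pi,l}$. The paper's proof is slightly terser but uses exactly these steps in the same order; your closing remarks about why the quotient jumps from one to two dimensions upon reduction are a correct elaboration of what the paper leaves implicit.
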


\begin{proof}
Recall that we have 
\begin{equation}\label{multi-t}
\rmH^{3}_{\rmc}(\rmX(\rmB)\otimes\overline{\QQ}, \VV)_{\frakm}\otimes k=\overline{\rho}_{\pi, \lambda}^{\oplus t}
\end{equation}
and $\overline{\rho}_{\pi, \lambda} \vert_{\rmG_{\QQ_{p}}}$ is unramified by our assumption. Therefore the monodromy operator $\rmN\otimes k$ degenerates to $0$ on $\mathrm{H}^{3}_{\rmc}(\rmX(\rmB)\otimes{\overline{\FF}_{p}}, \rmR\Psi(\VV))_{\frakm}$. The monodromy filtration 
\begin{equation*}
0\subset_{\mathrm{Gr}_{-1,\frakm}}\mathrm{F}_{-1}\mathrm{H}^{3}_{\rmc, \frakm}\subset_{\mathrm{Gr}_{0,\frakm}} \mathrm{F}_{0}\mathrm{H}^{3}_{\rmc,\frakm} \subset_{\mathrm{Gr}_{1,\frakm}} \mathrm{F}_{1}\mathrm{H}^{3}_{\rmc, \frakm}.
\end{equation*}
of  $\mathrm{H}^{3}_{\rmc, \frakm}:=\mathrm{H}^{3}_{\rmc}(\rmX(\rmB)\otimes{\overline{\FF}_{p}}, \rmR\Psi(\VV))_{\frakm}=\mathrm{H}^{3}_{\rmc}(\rmX(\rmB)\otimes{\overline{\QQ}_{p}}, \VV)_{\frakm}$ is determined by the Picard--Lefschetz formula
and the successive quotient of this filtration is given by
\begin{equation}\label{mono-fil}
\begin{aligned}
&\mathrm{Gr}_{-1,\frakm}= \Coker(\beta)=\bigoplus\limits_{\sigma\in\Sigma_{p}(\rmB)}\mathrm{H}^{3}_{\{\sigma\}}(\rmX(\rmB)\otimes{\overline{\FF}_{p}}, \rmR\Psi(\mathbb{V}))_{\frakm}\\
&\mathrm{Gr}_{0,\frakm}= \frac{\mathrm{H}^{3}_{\rmc}(\rmX(\rmB)\otimes{\overline{\FF}_{p}}, \VV)_{\frakm}}{\bigoplus_{\sigma\in\Sigma_{p}(\rmB)}\mathrm{H}^{3}_{\{\sigma\}}(\rmX(\rmB)\otimes{\overline{\FF}_{p}}, \rmR\Psi(\mathbb{V}))_{\frakm}}\\
&\mathrm{Gr}_{1,\frakm}=\Ker(\alpha)(-1)=\bigoplus\limits_{\sigma\in\Sigma_{p}(\rmB)}\rmR^{3}\Phi_{\sigma}(\mathbb{V})_{\frakm}.\\
\end{aligned}
\end{equation}
By definition and the torsion-freeness of $\mathrm{Gr}_{1, \frakm}$, we have an exact sequence
\begin{equation*}
0\rightarrow \mathrm{Gr}_{0,\frakm}\otimes k\rightarrow (\mathrm{H}^{3}_{\rmc,\frakm}/\mathrm{F}_{-1}\mathrm{H}^{3}_{\rmc, \frakm})\otimes k\rightarrow \mathrm{Gr}_{1,\frakm}\otimes k\rightarrow 0.
\end{equation*}
Since $\rmN\otimes k$ is zero on $\rmH^{3}_{\rmc, \frakm}$, $\mathrm{F}_{-1}\mathrm{H}^{3}_{\rmc, \frakm}$ is contained in $\lambda\mathrm{H}^{3}_{\rmc, \frakm}$. Then $\mathrm{Gr}_{1,\frakm}\otimes k$  has to contribute to each $\overline{\rho}_{\pi, \lambda }\vert_{\rmG_{\QQ_{p}}}$ in \ref{multi-t-a} as a two dimensional space.
It then follows that 
\begin{equation}
\dim_{k} \bigoplus_{\sigma\in\Sigma_{p}(\rmB)}\rmR^{3}\Phi_{\sigma}(\mathbb{V})_{\frakm}\otimes k=2t.
\end{equation}
\end{proof}

\begin{proposition}
We have an equality
\begin{equation}
\dim_{k} \bigoplus_{\sigma\in\Sigma_{q}(\Pa(pq))}\rmR^{3}\Phi_{\sigma}(\mathbb{V})_{\frakm}\otimes k=s
\end{equation}
where $s$ is the multiplicity in \ref{pq-semi-a}.
\end{proposition}
\begin{proof}
Consider the isomorphism \eqref{pq-semi-a} 
\begin{equation}
\rmH^{3}_{\mathrm{c}}(\rmX_{\Pa}(pq)\otimes\overline{\QQ}_{q}, \VV)_{\frakm}\otimes k= \overline{\rho}^{\oplus s}_{\pi, \lambda}\vert_{\rmG_{\QQ_{q}}}
\end{equation}
where all $\overline{\rho}_{\pi, \lambda} \vert_{\rmG_{\QQ_{q}}}$ is ramified by our assumption. 
By the same argument as above we find that
\begin{equation}
\begin{aligned}
&\mathrm{Gr}_{-1,\frakm}\otimes k= \Coker(\beta\otimes k)=\bigoplus\limits_{\sigma\in\Sigma_{q}(\Pa(pq))}\mathrm{H}^{3}_{\{\sigma\}}(\rmX_{\Pa}(pq)\otimes{\overline{\FF}_{q}}, \rmR\Psi(\mathbb{V}))_{\frakm}\otimes k\\
&\mathrm{Gr}_{0,\frakm}\otimes k= \frac{\mathrm{H}^{3}_{\rmc}(\rmX_{\Pa}(pq)\otimes{\overline{\FF}_{q}}, \VV)_{\frakm}\otimes k}{\bigoplus\limits_{\sigma\in\Sigma_{q}(\Pa(pq))}\mathrm{H}^{3}_{\{\sigma\}}(\rmX_{\Pa}(pq)\otimes{\overline{\FF}_{q}}, \rmR\Psi(\mathbb{V}))_{\frakm}\otimes k}\\
&\mathrm{Gr}_{1,\frakm}\otimes k=\Ker(\alpha\otimes k)(-1)=\bigoplus\limits_{\sigma\in\Sigma_{q}(\Pa(pq))}\rmR^{3}\Phi_{\sigma}(\mathbb{V})_{\frakm}\otimes k\\
\end{aligned}
\end{equation}
give the graded piece of the monodromy filtration for $\rmH^{3}_{\mathrm{c}}(\rmX_{\Pa}(pq)\otimes{\overline{\FF}_{q}}, \rmR\Psi(\VV))_{\frakm}\otimes k$. 

Since $\overline{\rho}_{\pi, \lambda}\vert_ {\rmG_{\QQ_{q}}}$ is ramified, $\rmN\otimes k$ does not degenerate and has the same rank as $\rmN$ which is one. Therefore 
\begin{equation*}
\bigoplus\limits_{\sigma\in\Sigma_{q}(\Pa(pq))}\mathrm{H}^{3}_{\{\sigma\}}(\rmX_{\Pa}(pq)\otimes{\overline{\FF}_{q}}, \rmR\Psi(\mathbb{V}))_{\frakm}\otimes k
\end{equation*}
contributes to each $\overline{\rho}_{\pi, \lambda} \vert_{\rmG_{\QQ_{q}}}$ as an one dimensional subspace. It follows then that 
\begin{equation*}
\bigoplus_{\sigma\in\Sigma_{q}(\Pa(pq))}\rmR^{3}\Phi_{\sigma}(\mathbb{V})_{\frakm}\otimes k 
\end{equation*}
contributes to $\overline{\rho}_{\pi, \lambda}\vert_{\rmG_{\QQ_{q}}}$ as an one dimensional quotient. Then we have
\begin{equation*}
\dim_{k} \bigoplus_{\sigma\in\Sigma_{q}(\Pa(pq))}\rmR^{3}\Phi_{\sigma}(\mathbb{V})_{\frakm}\otimes k=s.
\end{equation*}
\end{proof}
\begin{myproof}{Theorem}{\ref{Rib-principle}}
Recall that in Corollary \ref{pq-sing}, we have proved that $\Sigma_{p}(\rmB)$ and $\Sigma_{q}(\Pa(pq))$ can be identified. Thus we have $s=2t$ by the previous two propositions. Note also that if we switch the role of $p$ and $q$ in the previous discussions, we will obtain that
\begin{equation*}
\begin{split}
&\dim_{k} \bigoplus_{\sigma\in\Sigma_{p}(\Pa(pq))}\rmR^{3}\Phi_{\sigma}(\mathbb{V})_{\frakm}\otimes k=2s;\\
&\dim_{k} \bigoplus_{\sigma\in\Sigma_{q}(\rmB)}\rmR^{3}\Phi_{\sigma}(\mathbb{V})_{\frakm}\otimes k=t.\\
\end{split}
\end{equation*}
By Corollary \ref{pq-sing} again, we have $t=2s$. Therefore we obtain that $s=t=0$ which is obviously impossible. Thus Ribet's principle is proved. 
\end{myproof}

\section{Main level lowering theorem}
In this section we will prove our main level lowering theorem.  We will first review an arithmetic level raising theorem for a cuspidal automorphic representation of $\GSp_{4}(\mathbb{A})$ of general type proved in \cite{Wang}.

\subsection{Arithmetic level raising for $\GSp_{4}$}
Let $\pi$ be a cuspidal automorphic representation of $\GSp_{4}(\mathbb{A})$ of general type which is of weight $(k_{1}, k_{2})$ satisfying $k_{1}\geq k_{2}\geq 3$ and $k_{1}\equiv k_{2}\mod 2$. Let $\Sigma_{\pi}$ be the set of non-archimedean places at which $\pi$ is ramified.  We fix a set of non-archimedean places $\Sigma_{\mathrm{min}}$ containing $\Sigma_{\pi}-\{p, q\}$. Let $\Sigma_{\mathrm{lr}}$ be another set of non-achimedean places that is disjoint from $\Sigma_{\mathrm{min}}$ but contains $\{p, q\}$. To state our level raising result, we first recall the definition of the rigidity of the residual Galois representation $\overline{\rho}_{\pi,\lambda}$. 

\begin{definition}\label{rigid}
We say the residual Galois representation $\overline{\rho}_{\pi, \lambda}: \rmG_{\QQ}\rightarrow\GSp_{4}(k)$ is rigid for $(\Sigma_{\mathrm{min}}, \Sigma_{\mathrm{lr}})$ if the following conditions are satisfied.
\begin{itemize}
\item For every $v\in\Sigma_{\min}$, every lifting of $\overline{\rho}_{\pi,\lambda}\vert_{\rmG_{\QQ_{v}}}$ is minimally ramified in the sense of \cite[Definition 3.4]{Wangd};
\item For every $v\in\Sigma_{\lr}$, the generalized eigenvalues of $\overline{\rho}_{\pi,\lambda}(\mathrm{Fr}_{v})$ contain the pair $\{v, v^{2}\}$ exactly once;
\item At $v=l$, $\overline{\rho}_{\pi,\lambda}\vert_{\rmG_{\QQ_{v}}}$ is regular Fontaine-Laffaille crystalline as in \cite[Definition 3.10]{Wangd};
\item For $v\not\in\Sigma_{\min}\cup\Sigma_{\lr}\cup\{l\}$, the representation $\overline{\rho}_{\pi,\lambda}\vert_{\rmG_{\QQ_{v}}}$ is unramified.
\end{itemize}
\end{definition}

Given $\Sigma_{\mathrm{lr}}\cup\Sigma_{\mathrm{min}}$, we assume in this section that the level $\rmU$ for the Shimura variety $\rmX(\rmB)$ is given in the following way.
\begin{itemize}
\item For $v\not\in\Sigma_{\lr}\cup\Sigma_{\min}$ or $v=l$, then $\rmU_{v}$ is hyperspecial;
\item For $v\in\Sigma_{\lr}-\{p, q\}$, then $\rmU_{v}$ is the paramodular subgroup of $\GSp_{4}(\ZZ_{v})$;
\item For $v\in\Sigma_{\min}$, then $\rmU_{v}$ is contained in the pro-$v$ Iwahori subgroup $\Iw_{1}(v)$;
\item For $v=p, q$, then  $\rmU_{v}$ is the paramodular subgroup  $\mathrm{Pa}_{\rmD}(p)$ of $\GU_{2}(\rmD)$ and $\mathrm{Pa}_{\rmD^{\prime}}(q)$ of $\GU_{2}(\rmD^{\prime})$ respectively.
\end{itemize}
We also choose the level $\rmU$ for $\rmX_{\Pa}(pq)$ in an obvious compatible way as above: for the  last point, $\rmU_{v}$ is the paramodular subgroup  $\mathrm{Pa}(v)$ for $v=p, q$ and we keep the rest of the choices. 

Given $\Sigma_{\mathrm{lr}}\cup\Sigma_{\mathrm{min}}$, we assume in this section that the level $\rmU$ for the Shimura set $\rmZ_{\rmH}(\rmB)$ is given in the following way.
\begin{itemize}
\item For $v\not\in\Sigma_{\lr}\cup\Sigma_{\min}$ or $v=l$ or $v=p$, then $\rmU_{v}$ is hyperspecial;
\item For $v\in\Sigma_{\lr}-\{pq\}$, then $\rmU_{v}$ is the paramodular subgroup of $\GSp(\ZZ_{v})$;
\item For $v\in\Sigma_{\min}$, then $\rmU_{v}$ is contained in the pro-$v$ Iwahori subgroup $\Iw_{1}(v)$;
\item For $v=q$, then  $\rmU_{v}$ is the paramodular subgroup $\mathrm{Pa}_{\rmD^{\prime}}(q)$ of $\GU_{2}(\rmD^{\prime})$.
\end{itemize}
We also choose the level $\rmU$ for $\rmZ_{\rmH}(\rmB^{\prime})$ in an obvious compatible way as above: For $v=q$ we choose $\rmU_{v}$ to be the hyperspecial subroup and for $v=p$ we choose $\rmU_{v}$ to be the paramodular subgroup $\mathrm{Pa}_{\rmD}(p)$. 

\begin{definition}
Suppose $p$ is a prime such that $\pi_{p}$ is an unramified principal series representation and $m$ is a positive integer. We say $p$ is level raising special for $\pi$ of depth $m$ if 
\begin{enumerate}
\item we have $l\nmid p^{2}-1$;
\item the Hecke parameters $[\alpha_{p}, \beta_{p}, p^{3}\beta^{-1}, p^{3}\alpha^{-1}_{p}]$ of $\pi_{p}$ satisfy simultaneously the following condiitons, 
\begin{equation*}
\begin{aligned}
&\beta_{p}+p^{3}\beta^{-1}_{p}\equiv \rmu (p+ p^{2})\mod \lambda \text{ for some $\rmu\in\{\pm1\}$}\\
&\alpha_{p}+p^{3}\alpha^{-1}_{p}\not\equiv \pm (p+ p^{2})\mod \lambda\\
&\alpha_{p}+\beta_{p}+p^{3}\beta^{-1}_{p}+p^{3}\alpha^{-1}_{p}\not\equiv \pm2(p+ p^{2}) \mod \lambda\\
\end{aligned}
\end{equation*} 
and such that the $\lambda$-adic valuation of $\beta_{p}+p^{3}\beta^{-1}_{p}-\rmu (p+ p^{2})$ is exactly $m$.
\end{enumerate}
\end{definition}

Now we can state our arithmetic level raising theorem for $\pi$ in \cite{Wang} at a level raising special prime as above. 

\begin{theorem}\label{arithmetic-level-raising}
Suppose that $\pi$ is a cuspidal automorphic representation of $\GSp_{4}(\mathbb{A})$ of general type with weight $(k_{1}, k_{2})$ satisfying $k_{1}\geq k_{2}\geq 3$ and $k_{1}\equiv k_{2}\mod 2$ and with trivial central character. 
Let $p$ be a level raising special prime for $\pi$ of depth $m$. We assume further that
\begin{enumerate}
\item the prime $l+1\geq k_{1}+k_{2}$;
\item $\overline{\rho}_{\pi,\lambda}$ is rigid for $(\Sigma_{\min}, \Sigma_{\lr})$ as in Definition \ref{rigid};
\item $\overline{\rho}_{\pi,\lambda}$ is absolutely irreducible and the image of $\overline{\rho}_{\pi,\lambda}(\rmG_{\QQ})$ contains $\GSp_{4}(\FF_{l})$;
\item The cohomology group  $\rmH^{i}_{\rmc}(\overline{\rmX}(\rmB)\otimes\overline{\FF}_{p}, \rmR\Psi(\VV))_{\frakm}$ of the Shimura variety $\rmX(\rmB)$ is concentrated in degree $3$;
\item $\frakm$ is in the support of $\calC(\rmZ_{\rmH}(\overline{\rmB}), \VV)$.
\end{enumerate}
Then we have the following isomorphism
\begin{equation*}
\rmH^{1}_{\sing}(\QQ_{p^{2}}, \rmH^{3}_{\rmc}(\overline{\rmX}(\rmB)\otimes\overline{\FF}_{p}, \rmR\Psi(\VV)(1))_{\frakm})\xrightarrow{\sim}
\calC(\rmZ_{\rmH}(\overline{\rmB}), \VV)/\lambda^{m}.
\end{equation*}
\end{theorem}
\begin{proof}
This follows from the same proof as \cite[Theorem 9.9]{Wang} with only trivial modifications to handle the non-trivial coefficient.
\end{proof}

As an application of the arithmetic level raising theorem, we can deduce the following level raising theorem.
\begin{corollary}\label{level-raise}
Let $\pi$ be an automorphic representation of $\GSp_{4}(\mathbb{A})$ as above. Suppose that $\pi$ and $\overline{\rho}_{\pi, \lambda}$ satisfy all the assumptions in Theorem \ref{arithmetic-level-raising}. Let $p$ be a level raising special prime for $\pi$ of depth $m$. Then there exists an automorphic representation $\pi^{\prime}$ of $\GSp_{4}(\mathbb{A})$ of general type with the same weight as $\pi^{\prime}$ such that
\begin{enumerate}
\item the component $\pi^{\prime}_{p}$ at $p$ is of type $\mathrm{IIa}$;
\item we have an isomorphism of the residual Galois representation
\begin{equation*}
\overline{\rho}_{\pi, \lambda}\cong \overline{\rho}_{\pi^{\prime}, \lambda}.
\end{equation*}
\end{enumerate}
In this case we will say $\pi^{\prime}$ is a level raising representation of $\pi$. 
\end{corollary}

\subsection{The main level lowering theorem}
The goal of this subsection is to prove the following theorem which is an analogue of Serre's epsilon conjecture for $\GSp_{4}$ and is the main level lowering theorem in this article.
\begin{theorem}\label{main-thm}
Let $\pi$ be a cuspidal automorphic representation of $\GSp_{4}(\mathbb{A})$ of general type which is of weight $(k_{1}, k_{2})$ satisfying  $k_{1}\geq k_{2}\geq 3$ and $k_{1}\equiv k_{2}\mod 2$.  Suppose $l+1\geq k_{1}+k_{2}$. Let $p$ be a prime different from $l$. Let $\rmU=\Pa(p)\rmU^{(p)}$ with $\rmU^{(p)}\subset \GSp_{4}(\mathbb{A}^{(\infty p)})$ such that $\rmU$ is a neat open compact subgroup such that $\pi^{\rmU}\neq 0$. Suppose that $\pi_{p}$ is ramified of type $\mathrm{IIa}$ in the table of Schmidt. Suppose also that the residual Galois representation $\overline{\rho}_{\pi, \lambda}$ satisfies the following assumptions.
\begin{enumerate}
\item  $\overline{\rho}_{\pi, \lambda}$ satisfies Assumption \ref{B-assumptions};
\item  $\overline{\rho}_{\pi,\lambda}$ is rigid for $(\Sigma_{\min}, \Sigma_{\lr})$ as in Definition \ref{rigid};
\item  $\overline{\rho}_{\pi,\lambda}$ is absolutely irreducible and the image of $\overline{\rho}_{\pi,\lambda}(\rmG_{\QQ})$ contains $\GSp_{4}(\FF_{l})$;
\end{enumerate}
Then there exists a cuspidal automorphic representation $\breve{\pi}$of general type with the same weight as $\pi$ such that 
\begin{equation*}
\overline{\rho}_{\pi, \lambda}\cong \overline{\rho}_{\breve{\pi}, \lambda} 
\end{equation*}
and $\breve{\pi}_{p}$ is an unramified principal series representation.
\end{theorem} 

The strategy to prove this theorem is similar to that of Ribet's proof of the original epsilon conjecture. First by using Chebotarev density theorem and the large image assumption, we can find a prime $q$ which is level raising special for $\pi$. We apply Corollary \ref{level-raise} to $\pi$ and obtain an cuspidal automorphic representation $\pi^{\prime}$ of general type occurring in the cohomology of $\rmX_{\Pa}(pq)$ as well as in the cohomology of $\rmX(\rmB)$ by the Jacquet--Langlands correspondence. Then we assume that all the representations appear in the cohomologies of $\rmX_{\Pa}(pq)$ and $\rmX(\rmB)$ localized at the maximal ideal corresponding to $\overline{\rho}_{\pi,\lambda}$ are ramified at $p$. Under this assumption, one can proceed as before to calculate the dimension of the space of vanishing cycles on $\rmX_{\Pa}(pq)$ and $\rmX(\rmB)$ at $p$ and $q$ and derive a contradiction.

\subsection{Proof of the main level lowering theorem}
Following the above strategy, we assume that all $\vec{\pi}$ appear in the decompositions below
\begin{equation}\label{decomp-2}
\begin{split}
&\rmH^{3}_{\rmc}(\rmX(\rmB)\otimes\overline{\QQ}, \VV)_{\frakm}\otimes E_{\lambda}=\bigoplus_{\vec{\pi}}\rho^{n(\vec{\pi})}_{\vec{\pi}, \lambda}\\
&\rmH^{3}_{\rmc}(\rmX_{\Pa}(pq)\otimes\overline{\QQ}, \VV)_{\frakm}\otimes E_{\lambda}=\bigoplus_{\vec{\pi}}\rho^{m(\vec{\pi})}_{\vec{\pi}, \lambda}\\
\end{split}
\end{equation}
are \emph{ramified} at $p$. Again by our semisimple assumption and the fact that 
$\overline{\rho}_{\pi, \lambda}\cong \overline{\rho}_{\vec{\pi}, \lambda}$,
we have
\begin{equation}\label{pq-semi}
\rmH^{3}_{\rmc}(\rmX_{\Pa}(pq)\otimes\overline{\QQ}, \VV)_{\frakm}\otimes k=\overline{\rho}^{\oplus s}_{\pi, \lambda}
\end{equation}
for some positive integer $s$ and arguing similarly as before we deduce that 
\begin{equation}\label{multi-t}
\rmH^{3}_{\rmc}(\rmX(\rmB)\otimes\overline{\QQ}, \VV)_{\frakm}\otimes k=\overline{\rho}_{\pi, \lambda}^{\oplus t}
\end{equation}
using the Jacquet--Langlands correspondence. By the existence of $\pi^{\prime}$ discussed before, $t$ must also be non-zero.

\begin{proposition}
We have the following equalities 
\begin{equation*}
\dim_{k} \bigoplus\limits_{\sigma\in\Sigma_{q}(\Pa(pq))}\rmR^{3}\Phi_{\sigma}(\mathbb{V})_{\frakm}\otimes k=s.
\end{equation*}
and 
\begin{equation*}
\dim_{k} \bigoplus\limits_{\sigma\in\Sigma_{q}(\rmB)}\rmR^{3}\Phi_{\sigma}(\mathbb{V})_{\frakm}\otimes k=3t
\end{equation*}
\end{proposition}
\begin{proof}
To prove the first equality, we consider the specialization exact sequence 
\begin{equation*}
0\rightarrow\mathrm{H}^{3}(\overline{\rmX}_{\Pa}(pq)_{\overline{\FF}_{q}}, \VV)_{\frakm}\otimes k\rightarrow \mathrm{H}^{3}(\overline{\rmX}_{\Pa}(pq)_{\overline{\FF}_{q}}, \rmR\Psi(\VV))_{\frakm}\otimes k \rightarrow \bigoplus\limits_{\sigma\in\Sigma_{q}(\Pa(pq))}\rmR^{3}\Phi_{\sigma}(\mathbb{V})_{\frakm}\otimes k\rightarrow 0.
\end{equation*}
Note that $q$ is a level raising special prime, the Hecke parameters of $\pi$ are distinct modulo $\lambda$, thus the Frobenius eigenvalues are distinct modulo $\lambda$. Since the three Frobenius eigenvalues already appear in $\mathrm{H}^{3}(\overline{\rmX}_{\Pa}(pq)\otimes\overline{\FF}_{q}, \VV)_{\frakm}\otimes k$, the dimension equality follows.

We now prove the second equality. The same argument as above proves the equality 
\begin{equation*}
\dim_{k}\ker(\alpha)_{\frakm}\otimes k=t.
\end{equation*}
By definition and torsion freeness of $\rmH^{4}(\overline{\rmX}(\rmB)\otimes{\overline{\FF}_{q}}, \VV)_{\frakm}$ in \cite[Corollary 9.8]{Wang}, we have an exact sequence
\begin{equation*}
0\rightarrow\ker(\alpha)\otimes k\rightarrow \bigoplus\limits_{\sigma\in\Sigma_{q}(\rmB)}\rmR^{3}\Phi_{\sigma}(\mathbb{V})_{\frakm}\otimes k\rightarrow \mathrm{H}^{4}(\overline{\rmX}(\rmB)\otimes\overline{\FF}_{q}, \VV)_{\frakm}\otimes k \rightarrow 0.
\end{equation*}
Since we have an isomorphism 
\begin{equation*}
\mathrm{H}^{4}(\overline{\rmX}(\rmB)\otimes\overline{\FF}_{q}, \VV_{k})_{\frakm}\cong \calC(\rmZ_{\rmH}({\rmB^{\prime}}), \VV_{k})^{\oplus 2}_{\frakm}
\end{equation*}
by \cite[Corollary 9.8]{Wang} and our main result of arithmetic level raising gives the isomorphism
\begin{equation*}
\rmH^{1}_{\sing}(\QQ_{q^{2}}, \rmH^{3}_{\rmc}(\overline{\rmX}(\rmB)\otimes\overline{\FF}_{q}, \rmR\Psi(\VV_{k})(1))_{\frakm})\xrightarrow{\sim}
\calC(\rmZ_{\rmH}({\rmB^{\prime}}), \VV_{k}),
\end{equation*}
note that the role of the two primes $p$ and $q$ are switched in the statement of Theorem \ref{arithmetic-level-raising}.
A simple calculation shows 
\begin{equation*}
\rmH^{1}_{\sing}(\QQ_{q^{2}}, \rmH^{3}_{\rmc}(\overline{\rmX}(\rmB)\otimes\overline{\FF}_{q}, \rmR\Psi(\VV_{k})(1))_{\frakm})
\end{equation*}
is of dimension $t$ and it follows that 
\begin{equation*}
\dim_{k} \mathrm{H}^{4}(\overline{\rmX}(\rmB)\otimes\overline{\FF}_{q}, \VV)_{\frakm}\otimes k=2t. 
\end{equation*}
Hence we have 
\begin{equation*}
\dim_{k}\bigoplus\limits_{\sigma\in\Sigma_{q}(\rmB)}\rmR^{3}\Phi_{\sigma}(\mathbb{V})_{\frakm}\otimes k =3t.
\end{equation*}
\end{proof}

\begin{proposition}
Suppose that all those representation $\vec{\pi}$ appearing in \eqref{decomp-2} are all ramified at $p$. Then we have
\begin{equation*}
\dim_{k} \bigoplus\limits_{\sigma\in\Sigma_{p}(\rmB)}\rmR^{3}\Phi_{\sigma}(\mathbb{V})_{\frakm}\otimes k=2t
\end{equation*}
and 
\begin{equation*}
\dim_{k} \bigoplus\limits_{\sigma\in\Sigma_{p}(\Pa(pq))}\rmR^{3}\Phi_{\sigma}(\mathbb{V})_{\frakm}\otimes k=2s.
\end{equation*}
\end{proposition}
\begin{proof}
We note that by our assumption that $\vec{\pi}$ are all ramified in \eqref{decomp-2}, these equalities  follow from the same analysis as in Proposition \ref{2t-degen}. 
\end{proof}

\begin{myproof}{Theorem}{\ref{main-thm}}
Recall that in Corollary \ref{pq-sing}, we have proved that $\Sigma_{p}(\rmB)$ and $\Sigma_{q}(\Pa(pq))$ can be identified and similarly $\Sigma_{q}(\rmB)$ and $\Sigma_{p}(\Pa(pq))$ can be identified. It follows immediately that $2s=3t$ and $s=2t$ by the previous two propositions under our assumption that $\vec{\pi}_{p}$ is ramified for each $\vec{\pi}$ in \eqref{decomp-2}. Hence $s=t=0$ and we have a contradiction. This implies we can drop the level at $p$ from the level raised representation $\pi^{\prime}$. Finally we apply Mazur's principle to drop the level at $q$ to obtain the representation $\breve{\pi}$. 
\end{myproof}

\end{document}